 \def\BlackBoxes{\global\overfullrule5\p@}
\def\cprime{$'$}
 \newcommand*{\HAbeq}{$$\refstepcounter{equation}}
 \newcommand*{\HAeeq}{\eqno(\theequation)$$}
 \newcommand*{\HAbal}{\begin{aligned}}
 \newcommand*{\HAeal}{\end{aligned}}
 \newcommand*{\HAqa}{\;,\qquad}
 \newcommand*{\HAqb}{\;,\quad}
 \newcommand*{\HAmf}[1]{\mathbf{#1}} 
 \newcommand*{\HAci}{\mathaccent"7017 }  
 \newcommand*{\HAhb}[1]{\hbox{$#1$}} 
 \newcommand*{\HAsdot}{\!\cdot\!}
 \newcommand*{\HAsco}{\colon}
 \newcommand*{\HAsn}{\kern1pt|\kern1pt}
 \newcommand*{\HAbsn}{\kern1pt\big|\kern1pt}
 \newcommand*{\HAssm}{\!\setminus\!}
 \newcommand*{\HAbssm}{\,\big\backslash\,}
 \newcommand*{\HAeea}[1]{\hbox{$[\![#1]\!]$}}
 \newcommand*{\HAvsdot}{\hbox{$|\HAsdot|$}}
 \newcommand*{\HAVsdot}{\hbox{$\|\HAsdot\|$}}
 \newcommand*{\HAhh}[1]{{\textbf{#1}}}
 \newcommand*{\HAnpb}{\postdisplaypenalty=10000}
 \newcommand*{\HAol}{\overline}
 \newsavebox{\HAPrel}
 \sbox{\HAPrel}{\begin{picture}(2,6)(0,0)\put(1,2.5){\circle*{2}}\end{picture}}
 \newcommand*{\HAbtdot}{\mathrel{\usebox{\HAPrel}}}
 \newsavebox{\HAPtop}
 \sbox{\HAPtop}{\begin{picture}(2,2)(-1,-1)\put(0,0){\circle*{2}}\end{picture}}
 \newcommand*{\HAthdot}[3]{\mbox{\HAslap{\kern#3ex\raisebox{#2ex}{\usebox{\HAPtop}}}{$#1$}}}
 \newcommand*{\HAhthdot}{{^{\kern-.1ex\scriptscriptstyle\HAbt}}}
 \newcommand*{\HAthBB}{\rlap{\HAthdot{\HABB}{1.7}{.4}}{\HABB}}
 \newcommand*{\HAthgK}{\HAthdot{\HAgK}{1.7}{.7}}
 \newcommand*{\HAthR}{\rlap{\HAthdot{R}{1.7}{0.7}}{\HAph{R}}}   
 \newcommand*{\HAthsR}{\HAthdot{\HAsR}{1.7}{.4}}
 \newcommand*{\HAthg}{\HAthdot{g}{1.4}{.4}}
 \newcommand*{\HAthia}{\rlap{\HAthdot{\HAia}{1.4}{.3}}\HAph{\HAia}}
 \newcommand*{\HAthka}{\rlap{\HAthdot{\HAka}{1.4}{.6}}{\HAph{\HAka}}}   
 \newcommand*{\HAithka}{{\HAthdot{\scriptstyle\HAka}{1.0}{.4}}}
 \newcommand*{\HAthtau}{\rlap{\HAthdot{\tau}{1.4}{.6}}{\HAph{\tau}}}   
 \newcommand*{\HAthpsi}{\rlap{\HAthdot{\psi}{1.4}{.7}}\HAph{\psi}} 
 \newcommand*{\HAthrho}{\rlap{\HAthdot{\rho}{1.4}{.5}}\HAph{\rho}}
 \newcommand*{\HAph}{\phantom}
 \newcommand*{\HAhr}{\hookrightarrow}
 \newcommand*{\HAlllllora}{\relbar\joinrel\relbar\joinrel\relbar\joinrel\relbar\joinrel\longrightarrow}
 \newcommand*{\HAllllora}{\relbar\joinrel\relbar\joinrel\relbar\joinrel\longrightarrow}
 \newcommand*{\HAlora}{\longrightarrow}
 \newcommand*{\HAra}{\rightarrow}
 \newcommand*{\HAsmtB}{\smash[t]{\vphantom{\Big|}}}
 \newcommand*{\HAapproxS}%
 {\underset{\mbox{\raisebox{1ex}{$\scriptstyle S$}}}\approx}
 \newcommand*{\HAsimS}{\underset{\mbox{\raisebox{1ex}{$\scriptstyle S$}}}\sim}
 \newcommand*{\HAslap}[1]{\hspace{0pt}\hbox to 0pt{#1\hss}}
 \newcommand*{\HAcard}{{\rm card}}
 \newcommand*{\HAdiam}{\mathop{\rm diam}\nolimits}
 \newcommand*{\HAdist}{\mathop{\rm dist}\nolimits}
 \newcommand*{\HAtdiv}{\mathop{\rm div}\nolimits}
 \newcommand*{\HAdom}{{\rm dom}}
 \newcommand*{\HAgrad}{\mathop{\rm grad}\nolimits}
 \newcommand*{\HAid}{{\rm id}}
 \newcommand*{\HAim}{\mathop{\rm im}\nolimits}
 \newcommand*{\HAmult}{\rm mult}
 \newcommand*{\HAsign}{\mathop{\rm sign}\nolimits}
 \newcommand*{\HAloc}{{\rm loc}}
 \newcommand*{\HAis}{\subset}
 \newcommand*{\HAjs}{\supset}
 \newcommand*{\HAbt}{\bullet}
 \newcommand*{\HAes}{\emptyset}
 \newcommand*{\HAiy}{\infty}
 \newcommand*{\HAmt}{\mapsto}
 \newcommand*{\HApl}{\partial}
 \newcommand*{\HApa}{\partial^\alpha}
 \newcommand*{\HAcona}{\kern-1pt}
 \newcommand*{\HAcoU}{\kern-1pt} 
 \newcommand*{\HAcoW}{\kern-1pt}
 \newcommand*{\HAal}{\ualpha}
 \newcommand*{\HAba}{\ubeta}
 \newcommand*{\HADa}{\Delta}
 \newcommand*{\HAda}{\udelta}
 \newcommand*{\HAGa}{\Gamma}
 \newcommand*{\HAga}{\ugamma}
 \newcommand*{\HAia}{\iota}
 \newcommand*{\HAka}{\kappa}
 \newcommand*{\HALda}{\Lambda}
 \newcommand*{\HAlda}{\lambda}
 \newcommand*{\HAna}{\nabla}
 \newcommand*{\HAom}{\omega}
 \newcommand*{\HASa}{\Sigma}
 \newcommand*{\HAsa}{\sigma}
 \newcommand*{\HAta}{\theta}
 \newcommand*{\HAve}{\varepsilon}
 \newcommand*{\HAvp}{\varphi}
 \newcommand*{\HAmfal}{\pmb{\HAal}} 
 \newcommand*{\HAmfba}{\pmb{\HAba}} 
 \newcommand*{\HAmfGa}{\pmb{\HAGa}} 
 \newcommand*{\HAimfGa}{\pmb{\scriptsize{\HAGa}}}   
 \newcommand*{\HAJhJ}{(J,\hat J\,)}
 \newcommand*{\HAMg}{(M,g)}
 \newcommand*{\HAMgsg}{(M,\HAgsg)}
 \newcommand*{\HAcMg}{(\HAcM,g)}
 \newcommand*{\HAhM}{(\hat M)} 
 \newcommand*{\HAhMhg}{(\hat M,\hat g)}
 \newcommand*{\HAtMtg}{(\tilde{M},\tilde{g})}
 \newcommand*{\HAMr}{(M;\rho)}
 \newcommand*{\HAMtr}{(M;\tilde\rho)}
 \newcommand*{\HANg}{(N,g)}
 \newcommand*{\HANh}{(N,h)}
 \newcommand*{\HARmgm}{(\HABR^m,g_m)}
 \newcommand*{\HARngn}{(\HABR^n,g_n)}
 \newcommand*{\HARm}{(\HABR^m)}
 \newcommand*{\HARp}{(\HABR^+)}
 \newcommand*{\HArgK}{(\rho,\HAgK)}
 \newcommand*{\HAtrtK}{(\tilde{\rho},\tilde{\HAgK})}
 \newcommand*{\HAVE}{(V,E)}
 \newcommand*{\HAVg}{(V,g)}
 \newcommand*{\HAVR}{(V,\HABR)}
 \newcommand*{\HAXY}{(X,Y)}
 \newcommand*{\HABB}{{\mathbb B}}
 \newcommand*{\HABH}{{\mathbb H}}
 \newcommand*{\HABN}{{\mathbb N}}
 \newcommand*{\HABR}{{\mathbb R}}
 \newcommand*{\HABS}{{\mathbb S}}
 \newcommand*{\HABX}{{\mathbb X}}
 \newcommand*{\HABZ}{{\mathbb Z}}
 \newcommand*{\HAcA}{{\mathcal A}}
 \newcommand*{\HAcB}{{\mathcal B}}
 \newcommand*{\HAcC}{{\mathcal C}}
 \newcommand*{\HAcD}{{\mathcal D}}
 \newcommand*{\HAcF}{{\mathcal F}}
 \newcommand*{\HAcM}{{\mathcal M}}
 \newcommand*{\HAcR}{{\mathcal R}}
 \newcommand*{\HAcS}{{\mathcal S}}
 \newcommand*{\HAcU}{{\mathcal U}}
 \newcommand*{\HAcV}{{\mathcal V}}
 \newcommand*{\HAcW}{{\mathcal W}}
 \newcommand*{\HAgB}{{\mathfrak B}}
 \newcommand*{\HAgsg}{{\mathfrak g}}
 \newcommand*{\HAgK}{{\mathfrak K}}
 \newcommand*{\HAgL}{{\mathfrak L}}
 \newcommand*{\HAgM}{{\mathfrak M}}
 \newcommand*{\HAgN}{{\mathfrak N}}
 \newcommand*{\HAgP}{{\mathfrak P}}
 \newcommand*{\HAgS}{{\mathfrak S}}
 \newcommand*{\HAsA}{{\mathsf A}}
 \newcommand*{\HAsR}{{\mathsf R}}
 \newif\ifinany@
 \def\column@plus{%
    \global\advance\column@\@ne
 }
 \def\add@amps#1{%
    \begingroup
        \count@#1
        \DN@{}%
        \loop
            \ifnum\count@>\column@
                \edef\next@{&\next@}%
                \advance\count@\m@ne
        \repeat
    \@xp\endgroup
    \next@
 }
 \def\Let@{\let\\\math@cr}
 \def\restore@math@cr{\def\math@cr@@@{\cr}}
 \def\default@tag{\let\tag\dft@tag}
 \newbox\strutbox@
 \def\strut@{\copy\strutbox@}
 \addto@hook\every@math@size{%
  \global\setbox\strutbox@\hbox{\lower.5\normallineskiplimit
         \vbox{\kern-\normallineskiplimit\copy\strutbox}}}
 \renewcommand{\start@aligned}[2]{%
    \RIfM@\else
        \nonmatherr@{\begin{\@currenvir}}%
    \fi
    \null\,%
    \if #1t\vtop \else \if#1b \vbox \else \vcenter \fi \fi \bgroup
        \maxfields@#2\relax
        \ifnum\maxfields@>\m@ne
            \multiply\maxfields@\tw@
            \let\math@cr@@@\math@cr@@@alignedat
        \else
            \restore@math@cr
        \fi
        \Let@
        \default@tag
        \ifinany@\else\openup\jot\fi
        \column@\z@
        \ialign\bgroup
           &\column@plus
            \hfil
            \strut@
            $\m@th\displaystyle{##}$%
           &\column@plus
            $\m@th\displaystyle{{}##}$%
            \hfil
            \crcr
 }
 \renewenvironment{aligned}[1][c]{%
    \start@aligned{#1}\m@ne
 }{%
    \crcr\egroup\egroup
 }
 \spnewtheorem*{HAproofs}{Proofs}{\itshape}{\rmfamily}
 \spnewtheorem*{HAproofof1.2}{Proof of Theorem~1.2}{\bfseries}{\rmfamily} 
 \spnewtheorem*{HAproofof1.6,1.8}{Proof of Theorem~1.6 and 
 Proposition 1.8}{\bfseries}{\rmfamily} 
 \spnewtheorem*{HAproofof1.9}{Proof of Theorem~1.9}{\bfseries}{\rmfamily} 
 \def\ds@numart{\@numarttrue                
  \@takefromreset{figure}{chapter}%
  \@takefromreset{table}{chapter}%
  \@takefromreset{equation}{chapter}%
  \def\thesection{\@arabic\c@section}%
  \def\thefigure{\@arabic\c@figure}%
  \def\thetable{\@arabic\c@table}%
  \def\theequation{\thesection.\arabic{equation}} 
  \def\thesubequation{\arabic{equation}\alph{subequation}}}
\begin{document}
\title*{Uniformly Regular and Singular Riemannian Manifolds}
\author{Herbert Amann}
\institute{Math.\ Institut Universit\"at Z\"urich, 
Winterthurerstr.~190, CH-8057 Z\"urich\\ 
\email{herbert.amann@math.uzh.ch}}
\maketitle 

\abstract{A detailed study of uniformly regular Riemannian manifolds and 
manifolds with singular ends is carried out in this paper. Such classes of 
manifolds are of fundamental importance for a Sobolev space solution theory 
for parabolic evolution equations on non-compact Riemannian manifolds with 
and without boundary. Besides pointing out this connection in some detail 
we present large families of uniformly regular and singular manifolds 
which are admissible for this analysis.} 
\section{Introduction}\label{HAsec-I}%
The principal object of our concern is an in-depth study of evolution 
equations on non-compact Riemannian manifolds. We are particularly 
interested in establishing an optimal local existence theory for 
quasilinear parabolic initial boundary value problems in a Sobolev space 
framework. For this we need, in the first instance, a~good understanding of 
fractional order \hbox{$L_p$-Sobolev} spaces, including sharp embedding 
and trace theorems, etc. Although fractional order Sobolev spaces can be 
invariantly defined on any non-compact Riemannian manifold, it is 
not possible to establish embedding and trace theorems in this 
generality. For these to hold one has to impose restrictions on the 
underlying manifold near infinity. 

\par 
In our paper~\cite{Ama12b} we have introduced the class of uniformly 
regular Riemannian manifolds and shown, in particular, that fractional 
order Sobolev spaces on such manifolds possess all the properties alluded to 
above. (Also see~\cite{Ama12c} for complements and extensions to 
anisotropic settings.) This class encompasses the 
well-studied case of complete Riemannian manifolds without boundary and 
bounded geometry. Of course, in the study of boundary value problems 
manifolds with boundary are indispensable. In our previous papers 
\cite{Ama12c}, \cite{Ama12b}, and~\cite{Ama14a} we have 
presented examples of manifolds with boundary which are uniformly 
regular. Yet proofs have not been included. The reason being that it needs 
quite a~bit of argumentation to establish these claims. It is the purpose 
of this paper to close this gap and carry out a detailed study of uniformly 
regular Riemannian manifolds. Some of the main results and their 
ramifications are explained in the following. 

\par 
Let $\HAMg$ be a smooth \hbox{$m$-dimensional} Riemannian manifold 
with boundary (which may be empty). Unless explicitly stated otherwise, 
\HAhb{m\in\HABN^\times:=\HABN\HAssm\{0\}}. 
An atlas~$\HAgK$ for~$M$ is said to be uniformly regular if it 
consists of normalized charts\footnote{Precise definitions of all concepts 
used in this introduction without further explanation are found in the 
main body of this paper---in Section~\ref{HAsec-N}, in particular.}, has 
finite multiplicity, all coordinate changes have uniformly bounded 
derivatives of all orders, and if it is shrinkable. By the latter we mean 
that there is a uniform shrinking of all chart domains such that the result 
is an atlas as well. The normalization of the local charts also means that 
they are well adapted to the boundary in a natural precise sense. The 
shrinkability assumption is the most restrictive one. For example, 
the open unit ball in~$\HABR^m$, endowed with the Euclidean metric~%
\HAhb{|\D x|^2}, does not possess a uniformly regular atlas. 

\par 
Let $M$ be equipped with a uniformly regular atlas~$\HAgK$. The metric~$g$ 
is called uniformly regular if its local representation~$\HAka_*g$ is 
equivalent to the Euclidean metric of~$\HABR^m$ and has bounded 
derivatives of all orders, uniformly with respect to 
\HAhb{\HAka\in\HAgK}. Then $\HAMg$ is said to be \emph{uniformly regular} 
if it 
possesses a uniformly regular atlas~$\HAgK$ and $g$~is uniformly regular. 
Loosely speaking, this means that $M$~has an atlas whose coordinate patches 
are all `of approximately the same size'.  The concept of uniform regularity 
is independent of the particular choice of the atlas~$\HAgK$ in a natural 
sense (made precise in (\ref{HAN.Keq})). Fortunately, in practice a~specific 
atlas is rarely needed. It suffices to know that there exists one. 

\par 
We denote by~$c$ constants 
\HAhb{\geq1} whose actual value may vary from occurrence to occurrence; but 
$c$~is always independent of the free variables in a given formula, unless 
a dependence is explicitly indicated. 

\par 
On the set of all nonnegative functions, defined on some nonempty 
set~$S$, whose specific realization will be clear in any given situation, we 
introduce an equivalence relation 
\HAhb{{}\sim{}} by writing 
\HAhb{f\sim g} iff there exists~$c$ such that 
\HAhb{f/c\leq g\leq cf}. Here inequalities between symmetric 
bilinear forms are understood as inequalities between the corresponding 
polar forms. By~$\HAmf{1}$, more precisely~$\HAmf{1}_S$, we denote 
the constant function 
\HAhb{S\HAra\HABR}, 
\ \HAhb{s\HAmt1}. 

\par 
Now we present some examples to illustrate the extent of the concept 
of uniform regularity. 
\begin{HAexamples}\label{HAexa-I.ex} 
\ (a) 
\HAhb{\HABR^m=(\HABR^m,|\D x|^2)} and closed half-spaces thereof are 
uniformly regular. 

\par 
(b) 
Every compact Riemannian manifold is uniformly regular. 

\par 
(c) 
If $\HAMg$ is a Riemannian submanifold with compact boundary of a uniformly 
regular Riemannian manifold, then it is uniformly regular also. 

\par 
(d) 
Complete Riemannian manifolds without boundary and bounded geometry are 
uniformly regular. 

\par 
(e) 
Products of uniformly regular manifolds are uniformly regular. 

\par 
(f) If $(M_1,g_1)$ and $(M_2,g_2)$ are isometric, then $(M_1,g_1)$ is 
uniformly regular iff $(M_2,g_2)$ is so. 
\end{HAexamples}
\begin{HAproofs} 
For~(a) see (\ref{HAP.RH}). Statements \hbox{(b)--(d)} are proved in 
Section~\ref{HAsec-U}. Assertion~(e) is a particular instance of 
Theorem~\ref{HAthm-P.MM}. Claim~(f) follows from Lemma~\ref{HAlem-P.f}. 
\qed  
\end{HAproofs} 
There are also Riemannian manifolds with singular ends which are uniformly 
regular. To explain this in more detail we need some preparation. We fix 
\HAhb{d\geq m-1} and suppose that $(B,g_B)$ is an 
\HAhb{(m-1)}-dimensional Riemannian submanifold of~$\HABR^d$. Then, given 
\HAhb{\HAal\geq0}, 
\HAbeq \label{HAI.F} 
\bigl\{\,(t,t^\HAal y)\ ;\ t>1,\ y\in B\,\bigr\} 
\HAis\HABR\times\HABR^d=\HABR^{1+d} 
\HAeeq  
is the \emph{infinite model \hbox{$\HAal$-funnel} over}~$B$. We denote 
it by~$F_\HAal(B)$. It is an infinite cylinder if 
\HAhb{\HAal=0}, and an infinite (blunt) cone if 
\HAhb{\HAal=1}. Note that $F_\HAal(B)$ is an \hbox{$m$-dimensional} 
submanifold of~$\HABR^{1+d}$. If 
\HAhb{\HApl B\neq\HAes}, then 
\HAhb{\HApl F_\HAal(B)=F_\HAal(\HApl B)}. 

\par 
\vskip-.8\baselineskip
\noindent 
\begin{minipage}[c]{135pt} 
\vspace*{1\baselineskip} 
\hspace{4mm} 
In Fig.~1 there is depicted part of a (rotated) three-dimensional model 
funnel $F_{1/2}(B)$ with a compact base~$B$ having two connected components 
and three boundary components.  
\end{minipage}
\hfil 
\begin{minipage}[c]{105pt} 
\[
\begin{picture}(101,75)(0,0)
\put(0,-2){\makebox(0,0)[bl]%
{\includegraphics[height=75pt]{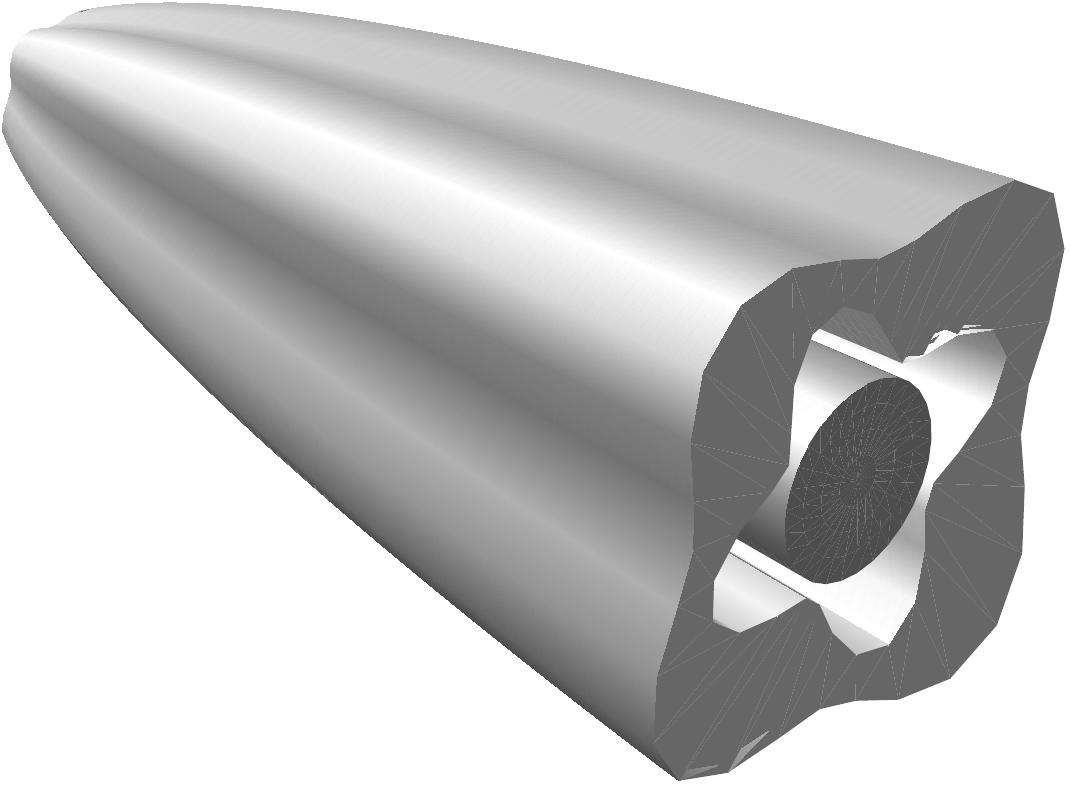}}} 
\put(0,0){\makebox(0,0)[bl]{\HAhh{\small Fig.~1}}}
\end{picture}
\] 
\end{minipage}
\hfil
\vskip.3\baselineskip

\par 
For 
\HAhb{0\leq\HAal\leq1} we endow~$F_\HAal(B)$ with the metric~$g_{F_\HAal(B)}$ 
induced by the embedding 
\HAhb{F_\HAal(B)\HAhr\HABR^{1+d}} so that 
\HAhb{\bigl(F_\HAal(B),g_{F_\HAal(B)}\bigr)} is a Riemannian submanifold of 
\HAhb{(\HABR^{1+d},|\D x|^2)}. An open subset~$V$ of~$M$ is an 
\emph{infinite \hbox{$\HAal$-funnel} over}~$B$ if $\HAVg$ is isometric to 
$\bigl(F_\HAal(B),g_{F_{\HAal(B)}}\bigr)$. It is~a \emph{tame end 
of}~$\HAMg$ if it is an infinite \hbox{$\HAal$-funnel} with 
$\HAal$~belonging to~$[0,1]$ and $B$~being compact. 

\par  
Suppose 
\HAhb{\{V_0,V_1,\ldots,V_k\}} is a finite open covering of~$M$ with 
\HAhb{V_i\cap V_j=\HAes} for 
\HAhb{1\leq i<j\leq k} such that $V_i$~is a tame end for 
\HAhb{1\leq i\leq k} and 
\HAhb{V_0,\ V_0\cap V_1,\ldots,\ V_0\cap V_k} are relatively compact 
in~$M$. Then $\HAMg$ is said to have (finitely many) \emph{tame ends}. 
\begin{theorem}\label{HAthm-I.T} 
If $\HAMg$ has tame ends, then it is uniformly regular. 
\end{theorem}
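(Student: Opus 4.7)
The proof splits into two parts: (i) show that each tame end, being isometric to an infinite $\HAal$-funnel $F_\HAal(B)$ with $\HAal\in[0,1]$ and $B$ a compact Riemannian submanifold of some $\HABR^d$, is uniformly regular; (ii) combine the resulting funnel atlases with a finite normalized atlas on a relatively compact neighborhood of $\HAol{V_0}$ in~$M$. Part~(ii) is easy: pick a relatively compact open $W$ with $\HAol{V_0}\HAis W$, cover the compact set $\HAol{W}$ by finitely many $M$-charts to obtain a uniformly regular atlas on $W$, and observe that only finitely many funnel charts from (i) meet $W$ (since $W\cap V_i$ is relatively compact). The combined atlas then covers $M$ and retains finite multiplicity, uniformly bounded coordinate changes, and uniform regularity of $g$. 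The heart of the proof is therefore part~(i).

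For~(i), parametrize $F_\HAal(B)$ by $(1,\HAiy)\times B$ through $\Psi(t,y):=(t,t^\HAal y)\in\HABR^{1+d}$, and fix a finite boundary-adapted normalized atlas $\HAgK_B=\{\HAka_\nu\}_{\nu=1}^N$ of $B$ with respect to which $g_B$ is uniformly regular (available by Example~\ref{HAexa-I.ex}(b)). For $n\in\HABN^\times$ set $r_n:=n^{-\HAal}$, subdivide each image $\HAka_\nu(\HAdom\HAka_\nu)\HAis\HABR^{m-1}$ into (half-)cubes $C_{n,\nu,j}$ of side length $r_n$ with uniformly bounded overlap, and define
\[
\chi_{n,\nu,j}(t,y):=\bigl(t-n,\,r_n^{-1}(\HAka_\nu(y)-c_{n,\nu,j})\bigr)
\]
on $(n-1,n+1)\times\HAka_\nu^{-1}(C_{n,\nu,j})$, where $c_{n,\nu,j}$ is the center of $C_{n,\nu,j}$. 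Normalization and shrinkability are clear from the construction; finite multiplicity follows since overlaps occur only between adjacent heights $n,n\pm1$ and within the bounded multiplicity of $\HAgK_B$; and coordinate changes combine coordinate changes of $\HAgK_B$ with affine maps of Jacobian $r_n/r_{n\pm1}=((n\pm1)/n)^\HAal=1+O(n^{-1})$, so all their derivatives are uniformly bounded.

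Uniform regularity of the induced metric is verified from the embedding formulas
\[
g_{tt}=1+\HAal^2t^{2\HAal-2}|y|^2,\HAqa
g_{tj}=\HAal t^{2\HAal-1}\langle y,\HApl_j\rangle,\HAqa
g_{jk}=t^{2\HAal}(g_B)_{jk}.
\]
Writing $(s,\HAta)=\chi_{n,\nu,j}(t,y)$ and pushing forward, the components become $g_{ss}=1+O(n^{2\HAal-2})$, $g_{s\HAta^j}=O(n^{\HAal-1})$, and $g_{\HAta^j\HAta^k}=(1+s/n)^{2\HAal}(g_B)_{jk}$; since $\HAal\leq1$ and $|y|$ is bounded on $B$, all components are uniformly bounded, each $\HAta$-derivative supplies a factor $r_n=n^{-\HAal}$ that cancels the $n^\HAal$ scaling from the embedding, and uniform regularity of $g_B$ then gives uniformly bounded mixed partial derivatives of every order. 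Uniform equivalence to the Euclidean form follows from the uniform non-degeneracy of $\Psi$ after rescaling.

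\textbf{Main obstacle.} The delicate case is $\HAal=1$, where the correction to $g_{ss}$ and the cross term $g_{s\HAta^j}$ do not vanish as $n\HAra\HAiy$ but are merely $O(1)$, and the prefactor $(1+s/n)^{2\HAal}$ is not uniformly close to~$1$ on a chart. One must then rely on compactness of $B$ to keep $|y|$ bounded and on the systematic $n^{-1}$ factor per $\HAta$-derivative to control all higher derivatives; the combinatorics of the resulting mixed partials, together with ensuring that boundary-adapted base charts induce boundary-adapted funnel charts when $\HApl B\neq\HAes$, constitutes the main technical burden.
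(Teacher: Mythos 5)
Your proposal is correct in substance and arrives at essentially the same construction the paper uses, but it reaches it by direct computation where the paper routes everything through general machinery, so a comparison is worth recording. The decomposition is identical: a finite atlas on a relatively compact neighborhood of $\HAol{V_0}$ (the paper's Lemma~\ref{HAlem-U.C}), an atlas of uniformly sized charts on each funnel end, and a patching step (Lemma~\ref{HAlem-P.P} via Theorem~\ref{HAthm-M.S}). Your funnel atlas --- unit steps in $t$, base charts rescaled by $t^{-\HAal}$ so that every chart image has diameter $\sim1$ --- is in substance the atlas $\HAgP$ built in the proof of Proposition~\ref{HApro-E.P}, where the rescaling is $\HAda_j\sim1/R(t_j)$ and the $t$-direction is reparametrized by the $R$-gauge $\HAsa$; for a funnel $\HAsa$ is comparable to $t$ itself, so your omission of that reparametrization is harmless for $0\leq\HAal\leq1$. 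The genuine difference lies in the verification. You bound the metric components, their derivatives, and the transition maps directly, and you correctly identify the two-sided metric estimate at $\HAal=1$ and the transition-map combinatorics as the main burden. The paper avoids precisely this burden with the notion of a parametrization-regular singularity datum: by Lemma~\ref{HAlem-E.i} it suffices to prove the one-sided lower bound $\HAka_*g\geq\rho_\HAka^2g_m/c$ (obtained by the same Cauchy--Schwarz-plus-compactness-of-$B$ argument you sketch) together with derivative bounds on the local parametrizations $i_\HAka$; the upper bounds on $\HAka_*g$ and, more importantly, the uniform bounds on $\|\tilde\HAka\circ\HAka^{-1}\|_{k,\HAiy}$ then come for free from $\HApl(\tilde\HAka\circ\HAka^{-1})=\HALda_{\tilde\HAka}\HApl i_\HAka$. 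Your route is more elementary and self-contained for the special case $R(t)=t^\HAal$; the paper's buys the general classes $\HAcC(J)$ and $\HAcF(J_\HAiy)$, hence the cusp and singular cases, with the same effort. Two small points to tighten: the cubes of side $r_n$ must form an overlapping (lattice-translated) family rather than a partition, or shrinkability fails; and when $\HApl B\neq\HAes$ the half-space coordinate of the composite chart is a base coordinate, so a permutation of coordinates (the paper's $\varpi$) is needed to meet the normalization convention.
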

\begin{proof} 
Section~\ref{HAsec-M}. 
\qed 
\end{proof} 
As mentioned above, our motivation for the study of uniformly regular 
Riemannian manifolds stems from the theory of parabolic equations. To 
explain their role in the present environment we consider a simple model 
problem. We set 
\HAbeq \label{HAI.A.d} 
\HAcA u:=-\HAtdiv(a\HAbtdot\HAgrad u)\;, 
\HAeeq 
with $a$~being a symmetric positive definite 
$(1,1)$-tensor field on~$\HAMg$ which is bounded and has 
bounded and continuous first order (covariant) derivatives. This is 
expressed by saying that $\HAcA$~is a regular uniformly strongly elliptic 
differential operator. This low regularity assumption for~$a$ is of 
basic importance for treating quasilinear problems in which 
$a$~depends on~$u$. 
 
\par 
We assume that $\HApl_0M$~is open and closed in~$\HApl M$ and 
\HAhb{\HApl_1M:=\HApl M\HAssm\HApl_0M}. Then we put 
$$ 
\HAcB_0u:=u\text{ on }\HApl_0M 
\HAqb \HAcB_1u:=(\unu\HAsn a\HAbtdot\HAgrad u)\text{ on }\HApl_1M, 
$$ 
where these operators are understood in the sense of traces and $\unu$~is 
the inward pointing unit normal vector field on~$\HApl_1M$. Thus 
\HAhb{\HAcB:=(\HAcB_0,\HAcB_1)} is the Dirichlet boundary operator 
on~$\HApl_0M$ and the Neumann operator on~$\HApl_1M$. 
 
\par 
Suppose 
\HAhb{0<T<\HAiy}. We write 
\HAhb{M_T:=M\times[0,T]} for the space time cylinder. Moreover, 
\HAhb{\HApl=\HApl_t} is the `time derivative', 
\HAhb{\HASa_T:=\HApl M\times[0,T]} the lateral boundary, and 
\HAhb{M_0=M\times\{0\}} the `initial surface' of~$M_T$. Then we consider the 
problem  
\HAbeq \label{HAI.A.P} 
\HApl u+\HAcA u=f\text{ on }M_T 
\HAqb \HAcB u=0\text{ on }\HASa_T
\HAqb u=u_0\text{ on }M_0. 
\HAnpb 
\HAeeq 
The last equation is to be understood as 
\HAhb{\HAga_0u=u_0} with the `initial trace' operator~$\HAga_0$\;. 

\par 
Of course, $\HApl_0M$ or $\HApl_1M$ or both may be empty. In such a situation 
obvious interpretations and modifications are to be applied. 
 
\par 
We are interested in an optimal \hbox{$L_p$-theory} for (\ref{HAI.A.P}). To 
describe it we have to introduce Sobolev-Slobodeckii spaces. We 
always assume that 
\HAhb{1<p<\HAiy}. The Sobolev space~$W_{\HAcoW p}^k(M)$ is then defined for 
\HAhb{k\in\HABN} to be the 
completion of~$\HAcD(M)$, the space of smooth functions with compact support, 
in $L_{1,\HAloc}(M)$ with respect to the norm 
\HAbeq \label{HAI.A.N}  
u\HAmt 
\Bigl(\sum_{j=0}^k 
\big\|\,|\HAna^ju|_{g_0^j}\,\big\|_{L_p(M)}^p\Bigr)^{1/p}\;. 
\HAeeq 
Here 
\HAhb{\HAna=\HAna_{\HAcona g}} is the Levi-Civita covariant derivative and 
\HAhb{\HAvsdot_{g_0^j}}~the 
$(0,j)$-tensor norm naturally induced by~$g$. Thus 
\HAhb{W_{\HAcoW p}^0(M)=L_p(M)}. If 
\HAhb{s\in\HABR^+\HAssm\HABN}, then the Slobodeckii space~$W_{\HAcoW p}^s(M)$ 
is defined by real interpolation: 
$$ 
W_{\HAcoW p}^s(M) 
:=\bigl(W_{\HAcoW p}^k(M),W_{\HAcoW p}^{k+1}(M)\bigr)_{s-k,p} 
\HAqa k<s<k+1 
\HAqb k\in\HABN\;. 
$$ 

\par 
Although these definitions are meaningful on any Riemannian manifold, they 
are not too useful in such a general setting since they may lack basic 
Sobolev type embedding properties, for example. The situation is different 
if we restrict ourselves to uniformly regular Riemannian manifolds. The 
following theorem is a consequence of the results of our paper~\cite{Ama12b} 
to which we direct the reader for details, proofs, and many more facts.  
\begin{theorem}\label{HAthm-I.S} 
Let $\HAMg$ be uniformly regular. Then the Sobolev-Slobodeckii 
spaces $W_{\HAcoW p}^s(M)$, 
\ \HAhb{s\geq0}, possess the same embedding, interpolation, and trace 
properties as in the classical Euclidean case. They can be characterized 
by means of local coordinates. 
\end{theorem}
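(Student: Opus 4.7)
The plan is to reduce all the claimed properties to their classical counterparts in $\HABR^m$ and the closed half-space $\HABH^m$ by localizing through the uniformly regular atlas $\HAgK$. Exploiting the shrinkability assumption, I would first construct a smooth partition of unity $\{\pi_\HAka : \HAka\in\HAgK\}$ subordinate to a fixed shrinking of $\HAgK$, together with slightly larger cutoffs $\chi_\HAka$ satisfying $\chi_\HAka\equiv 1$ on $\mathrm{supp}\,\pi_\HAka$, chosen so that on the model domain $\mathbb{Q}_\HAka\in\{\HABR^m,\HABH^m\}$ all derivatives of $\HAka_*\pi_\HAka$ and $\HAka_*\chi_\HAka$ are bounded uniformly in $\HAka$. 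With this at hand, introduce
\[
R^c u := \bigl(\HAka_*(\pi_\HAka u)\bigr)_{\HAka\in\HAgK},
\qquad
R\,(u_\HAka)_{\HAka} := \sum_{\HAka\in\HAgK} \HAka^{\,*}(\chi_\HAka u_\HAka),
\]
mapping, respectively, into and out of the ambient sequence space $\ell_p\bigl(\HAgK; W_p^s(\mathbb{Q}_\HAka)\bigr)$. The finite multiplicity of $\HAgK$ guarantees that the sum defining $R$ is locally finite with uniformly bounded overlap, so that $R\circ R^c = \mathrm{id}_{W_p^s(M)}$ and $W_p^s(M)$ is realized as a complemented subspace of the sequence space.

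The principal technical obstacle is establishing the norm equivalence
\[
\|u\|_{W_p^k(M)}^p \;\sim\; \sum_{\HAka\in\HAgK} \bigl\|\HAka_*(\pi_\HAka u)\bigr\|_{W_p^k(\mathbb{Q}_\HAka)}^p \qquad (k\in\HABN)
\]
that underlies the continuity of $R$ and $R^c$. Two consequences of uniform regularity drive this. First, $\HAka_* g$ is uniformly equivalent to the Euclidean metric on $\mathbb{Q}_\HAka$, so the pointwise tensor norm $|\HAna^j u|_{g_0^j}$ transfers, under $\HAka_*$, to an expression uniformly comparable to the Euclidean norm of the full $j$-th partial-derivative jet of $\HAka_* u$. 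Second, since $\HAka_* g$ has uniformly bounded derivatives of all orders, its Christoffel symbols and all their derivatives are uniformly bounded; hence expanding $\HAka_*(\HAna^j u)$ in coordinates yields the plain $j$-th Euclidean derivative of $\HAka_* u$ plus lower-order derivatives with uniformly bounded coefficients. Commuting with multiplication by $\pi_\HAka$ introduces only similar terms. Careful book-keeping of these transformation formulae, combined with the finite multiplicity used in both directions, yields the equivalence; this accounting is the heart of the argument.

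Once the retraction--coretraction pair is in place for integer $k$, all four claimed properties transfer automatically. The real interpolation functor commutes with $\ell_p$-direct sums and is natural under retractions, so $R^c$ extends to a continuous coretraction on the entire Slobodeckii scale, delivering the interpolation assertion. Classical Sobolev and Morrey embeddings in $\HABR^m$ and $\HABH^m$, applied componentwise and combined with the elementary embedding $\ell_p\hookrightarrow\ell_q$ for $p\leq q$ where needed, yield the corresponding embeddings on $M$. For the trace onto $\HApl M$, interior charts contribute nothing and boundary charts reduce---thanks to the normalization of $\HAgK$ at the boundary---to the classical half-space trace $W_p^s(\HABH^m)\to W_p^{s-1/p}(\HABR^{m-1})$; reassembling through a compatible partition of unity on $\HApl M$ produces the intrinsic trace in $W_p^{s-1/p}(\HApl M)$. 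Finally, the characterization by local coordinates is nothing but the assertion that $R^c$ is a topological isomorphism of $W_p^s(M)$ onto its closed image in $\ell_p\bigl(\HAgK; W_p^s(\mathbb{Q}_\HAka)\bigr)$, which is built into the construction.
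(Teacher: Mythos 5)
Your proposal is correct and takes essentially the same route as the paper's source: the paper gives no internal proof of this theorem but defers entirely to \cite{Ama12b}, whose argument is precisely the retraction--coretraction localization you describe, realizing $W_{\HAcoW p}^s(M)$ as a complemented subspace of an $\ell_p$-sum of Sobolev spaces over the model domains $\HABR^m$ and $\HABH^m$ via a uniformly bounded partition of unity subordinate to the uniformly regular atlas, with finite multiplicity and the uniform bounds on $\HAka_*g$ supplying the norm equivalences. No substantive differences.
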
 
We denote by~$W_{\HAcoW p,\HAcB}^s(M)$ the closed linear subspace of all 
\HAhb{u\in W_{\HAcoW p}^s(M)} satisfying 
\HAhb{\HAcB u=0} whenever $s$~is such that this condition is well-defined 
(cf. \cite[(1.4)]{Ama14a}). 
 
\par 
We set 
$$ 
A:=\HAcA\HAsn W_{\HAcoW p,\HAcB}^2(M)\;, 
$$ 
considered as an unbounded linear operator in~$L_p(M)$ with 
domain~$W_{\HAcoW p,\HAcB}^2(M)$. Then (\ref{HAI.A.P}) can be expressed 
as an initial value problem for the evolution equation 
$$ 
\dot u+Au=f\text{ on }[0,T] 
\HAqb u(0)=u_0 
\HAnpb 
$$ 
in~$L_p(M)$. 
 
\par 
Now we are ready to formulate the basic well-posedness result in the present 
model setting. 
\begin{theorem}\label{HAthm-I.A.P} 
Let $\HAMg$ be uniformly regular and 
\HAhb{p\notin\{3/2,\ 3\}}. Suppose that $\HAcA$~is regularly uniformly 
strongly elliptic. Then (\ref{HAI.A.P}) has for each 
$$ 
(f,u_0)\in L_p\bigl([0,T],L_p(M)\bigr)\times W_{\HAcoW p,\HAcB}^{2-2/p}(M) 
$$ 
a unique solution 
$$ 
u\in L_p\bigl([0,T],W_{\HAcoW p,\HAcB}^2(M)\bigr) 
\cap W_{\HAcoW p}^1\bigl([0,T],L_p(M)\bigr).  
\HAnpb 
$$ 
The map 
\HAhb{(f,u_0)\HAmt u} is linear and continuous.\pagebreak[2]\\
Equivalently: 
$-A$~generates an analytic semigroup on $L_p(M)$ and 
has the property of maximal regularity. 
\end{theorem}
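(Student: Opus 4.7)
The plan is to prove that $A$ has maximal $L_p$-regularity on $L_p(M)$ with domain $W_{\HAcoW p,\HAcB}^2(M)$. By the Dore--Da~Prato--Grisvard theory, maximal $L_p$-regularity implies that $-A$ generates a bounded analytic semigroup on $L_p(M)$ and is equivalent to the well-posedness assertion for (\ref{HAI.A.P}) with the trace space identified as the real interpolation space $(L_p(M),W_{\HAcoW p,\HAcB}^2(M))_{1-1/p,p}=W_{\HAcoW p,\HAcB}^{2-2/p}(M)$, the last equality being a consequence of the interpolation properties supplied by Theorem~\ref{HAthm-I.S}. The exclusion $p\notin\{3/2,3\}$ is precisely the threshold where the condition $\HAcB u=0$ ceases to be well-defined in $W_{\HAcoW p}^{2-2/p}(M)$ (namely, $2-2/p-1/p\in\HABN$ for the Dirichlet and Neumann parts respectively).

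The strategy for maximal regularity is to localize via a uniformly regular atlas $\HAgK$ with subordinate partition of unity $\{\HAvp_\HAka^2\}_{\HAka\in\HAgK}$ of finite multiplicity and uniformly bounded derivatives of all orders; such a partition of unity exists by the shrinkability of~$\HAgK$. In each chart, $\HAka_*\HAcA$ is a uniformly strongly elliptic second-order operator on an open subset of $\HABR^m$ or $\HABH^m$ whose coefficients and one derivative are bounded \emph{uniformly} in $\HAka\in\HAgK$; this is the payoff of the regular uniform strong ellipticity of $\HAcA$ combined with the uniform regularity of the metric~$g$. The boundary operator $\HAka_*\HAcB$ transfers to either the Dirichlet or an oblique Neumann condition on $\HApl\HABH^m$ with uniformly controlled coefficients. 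The hypothesis that $\HApl_0M$ is open and closed in $\HApl M$ guarantees that in any single chart only one of the two conditions appears, so the transmission problem at their interface never has to be confronted.

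For each model problem on $\HABR^m$, and on $\HABH^m$ with Dirichlet or Neumann boundary condition, maximal $L_p$-regularity for $p\notin\{3/2,3\}$ is classical (Denk--Hieber--Pr\"uss, Pr\"uss--Simonett); the Lopatinskii--Shapiro condition is trivial in both instances, and the uniform coefficient bounds yield maximal-regularity constants independent of the chart. Gluing is effected by writing $u=\sum_\HAka\HAvp_\HAka^2u$ and invoking Theorem~\ref{HAthm-I.S} to identify the global Sobolev norms with $\ell_p$-sums of local norms $\sum_\HAka\|\HAvp_\HAka u\|^p$; finite multiplicity of~$\HAgK$ ensures the equivalence constant is atlas-independent.

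The main technical obstacle is the uniform control of the commutators $[\HAcA,\HAvp_\HAka]$ and $[\HAcB,\HAvp_\HAka]$, which are of order one and zero respectively with coefficients bounded uniformly in $\HAka$. In the gluing step these commutators produce lower-order terms that must be absorbed: the standard device is to prove first an a priori maximal-regularity estimate for $A+\lambda$ with a sufficiently large shift~$\lambda>0$, so that the commutator terms can be treated as small perturbations via a Neumann series or continuity method applied uniformly across charts, and then to remove~$\lambda$ using the compactness encoded in Theorem~\ref{HAthm-I.S} locally combined with a standard spectral-shift argument. Once these uniform local estimates are assembled through the partition of unity, one obtains the global maximal $L_p$-regularity of $A$, from which the stated well-posedness and the analytic semigroup property follow by the semigroup-theoretic dictionary.
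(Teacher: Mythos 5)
The paper does not actually prove Theorem~\ref{HAthm-I.A.P}: its ``proof'' is a citation of~\cite{Ama14a}, where the result is established (with non-homogeneous boundary data and lower-order terms as well). So there is no in-paper argument to compare against; your localization scheme --- uniformly regular atlas, subordinate partition of unity of finite multiplicity with uniformly bounded derivatives, uniform model-problem maximal regularity on $\HABR^m$ and $\HABH^m$ from Denk--Hieber--Pr\"uss, identification of global norms with $\ell_p$-sums of chart norms via Theorem~\ref{HAthm-I.S}, and absorption of commutators --- is exactly the standard retraction--coretraction route that the cited reference follows, and your reading of the exclusion $p\notin\{3/2,3\}$ as the borderline cases $2-2/p=1/p$ (Dirichlet) and $1-2/p=1/p$ (Neumann) is correct.

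Two steps in your sketch need repair. First, the commutators $[\HAcA,\HAvp_\HAka]$ are first order with coefficients that are only \emph{uniformly bounded}; uniform boundedness alone does not make them small, so a Neumann-series absorption does not close as written. The smallness has to be manufactured, either from the interpolation inequality $\|v\|_{W_p^1}\leq\HAve\,\|v\|_{W_p^2}+c(\HAve)\,\|v\|_{L_p}$ combined with a large shift $\HAlda$, or by passing to a refined atlas with arbitrarily small charts (which uniform regularity permits, cf.\ Lemma~\ref{HAlem-P.V}) so that the principal part is a small perturbation of a constant-coefficient operator on each chart. Second, your proposed removal of the shift $\HAlda$ ``using the compactness encoded in Theorem~\ref{HAthm-I.S} locally'' cannot work: $M$ is in general non-compact and no Rellich-type compactness is available. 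Fortunately none is needed --- the statement concerns a finite time interval $[0,T]$, and maximal $L_p$-regularity on $[0,T]$ is invariant under the substitution $u\HAmt\E^{\HAlda t}u$, i.e.\ under replacing $A$ by $A+\HAlda$; likewise generation of an analytic semigroup is shift-invariant. With those two corrections the outline is sound and matches the strategy of~\cite{Ama14a}.
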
 
For this theorem we refer to~\cite{Ama14a} where 
non-homogeneous boundary conditions and lower order terms 
are treated as well and further references are given. 
Analogous theorems apply to higher order problems and parabolic equations 
operating on sections of uniformly regular vector bundles. 

\par 
On the surface, Theorem~\ref{HAthm-I.A.P} looks exactly the same as the very 
classical existence and uniqueness theorem for second order parabolic 
equations on open subsets of~$\HABR^m$ with smooth compact boundary 
(e.g., O.A. Ladyzhenskaya, V.A. Solonnikov, and 
N.N. Ural'ceva~\cite[Chapter~IV]{LSU68a} and 
R.~Denk, M.~Hieber, and J.~Pr{\"u}ss~\cite{DHP03a}). However, it is, in fact, 
a rather deep-rooted vast generalization thereof since it applies to any 
uniformly regular Riemannian manifold. 

\par 
In this connection we have to mention the work of 
G.~Grubb~\cite{Gru95b} who established a general 
\hbox{$L_p$~theory} for parabolic pseudo-differential boundary value 
problems (also see Section~IV.4.1 
in~\cite{Gru96a}). It applies to a class of noncompact manifolds, called 
`admissible', introduced in G.~Grubb and 
N.J.~Kokholm~\cite{GrK93a}. It is a subclass of the above 
manifolds with tame ends, namely a family of manifolds with 
conical ends. Earlier investigations of pseudo-differential operators on 
manifolds with conical ends are due to E.~Schrohe~\cite{Schro87a} who employs 
weighted Sobolev spaces. 

\par  
Recently, a~maximal regularity theory for parabolic differential 
equations on Riemannian manifolds without boundary and cylindrical ends
has been presented by Th.~Krainer~\cite{Krai09a}. This author uses a 
compactification technique to `reduce' the problem to a compact 
Riemannian manifold $\HAtMtg$, 
where $\tilde g$~is the cusp metric 
\HAhb{\D t^2/t^4+g_Y} in a collar neighborhood of the boundary~$Y$ 
of~$\tilde M$. Then the theory of cusp pseudo-differential operators 
is applied in conjunction with the general \hbox{$\HAcR$-boundedness} theory 
of maximal regularity for parabolic evolution equations. The final result 
is then formulated in the Sobolev space setting for $\HAtMtg$ which involves 
rather complicated weighted norms. In contrast, our result 
Theorem~\ref{HAthm-I.A.P} uses the Sobolev space setting of~$\HAMg$ only. 
Due to Theorem~\ref{HAthm-I.T}, it applies to manifolds with cylindrical 
ends, in particular. 

\par 
There is a tremendous amount of literature on heat equations on complete 
Riemannian manifolds without boundary and bounded geometry. Most of it 
concerns heat kernel estimates and spectral theory (see, for example, 
E.B.~Davies~\cite{Dav89a} or A.~Grigor'yan~\cite{Gri09a} and the references 
therein). By imposing further structural conditions, as the assumption of 
non-negative Ricci curvature, for instance, heat kernel estimates lead to 
maximal regularity results for the Laplace-Beltrami operator (e.g.\ 
M.~Hieber and J.~Pr{\"u}ss~\cite{HP97a}, 
A.L.~Mazzucato and V.~Nistor~\cite{MaN06a}. Also see 
A.~Grigor'yan and L.~Saloff-Coste~\cite{GriSal09a} and 
L.~Saloff-Coste~\cite{Sal10a}). Due to Example~\ref{HAexa-I.ex}(d) our 
Theorem~\ref{HAthm-I.A.P} applies in this setting without any additional 
restriction on the geometry of $\HAMg$. 

\par 
Let now $\HAMg$ be a Riemannian manifold which is not uniformly regular. 
It is said to be \emph{singular of type}~$\rho$ if 
\HAhb{\rho\in C^\HAiy\bigl(M,(0,\HAiy)\bigr)} and 
\HAhb{(M,g/\rho^2)} is uniformly regular. Any such~$\rho$ is~a 
\emph{singularity function} for~$\HAMg$. We assume that $\rho$~is bounded 
from above. Then 
\HAhb{\inf\rho=0} and $\HAMg$~is said to be \emph{singular near} 
\HAhb{\rho=0}. In order for 
\HAhb{\rho\in C^\HAiy\bigl(M,(0,\HAiy)\bigr)} to qualify as a singularity 
function it has to satisfy structural conditions naturally associated 
with~$\HAMg$ (see~(\ref{HAN.sd})). Below we describe a large class of 
singularity functions which are closely related to the geometric structure 
near the `singular ends' of~$\HAMg$, that is, the behavior of~$\HAMg$ 
`near infinity'. 

\par 
Suppose $\HAMg$~is singular of type~$\rho$. We set 
\HAhb{\hat g:=g/\rho^2} and 
\HAhb{\HAhMhg:=(M,g/\rho^2)}. Then we can apply the preceding results to the 
uniformly regular Riemannian manifold~$\HAhMhg$. Since $\hat g$~is 
conformally equivalent to~$g$ (and $\rho$~satisfies appropriate structural 
conditions) we can express the Sobolev-Slobodeckii 
spaces~$W_{\HAcoW p}^s\HAhM$, 
which are constructed by means of~$\HAna_{\HAcona\HAcona\hat g}$, in terms of 
weighted Sobolev-Slobodeckii spaces on~$M$. More precisely, we define 
$W_{\HAcoW p}^{k,\HAlda}\HAMr$ for 
\HAhb{k\in\HABN} and 
\HAhb{\HAlda\in\HABR} by replacing (\ref{HAI.A.N}) in the definition 
of~$W_{\HAcoW p}^k(M)$ by 
$$ 
u\HAmt\Bigl(\sum_{j=0}^k 
\big\|\rho^{\HAlda+j}\,|\HAna^ju|_{g_0^j}\big\|_{L_p(M)}^p\Bigr)^{1/p}\;. 
$$ 
Furthermore, 
$$ 
W_{\HAcoW p}^{s,\HAlda}\HAMr 
:=\bigl(W_{\HAcoW p}^{k,\HAlda}\HAMr,W_{\HAcoW p}^{k+1,\HAlda}\HAMr 
\bigr)_{s-k,p} 
\HAqa k<s<k+1 
\HAqb k\in\HABN\;, 
$$ 
and 
\HAhb{L_p^\HAlda\HAMr:=W_{\HAcoW p}^{0,\HAlda}\HAMr}. Then, 
see~\cite{Amaxx1}, 
$$ 
W_{\HAcoW p}^s\HAhM\doteq W_{\HAcoW p}^{s,-m/p}\HAMr 
\HAqa s\geq0\;, 
$$ 
where 
\HAhb{{}\doteq{}}~means: equal except for equivalent norms. In~\cite{Amaxx1} 
it is also shown that 
$$ 
W_{\HAcoW p}^s\HAhM\HAra W_{\HAcoW p}^{s,\HAlda}\HAMr 
\HAqb u\HAmt\rho^{-\HAlda+m/p}u 
$$ 
is an isomorphism. With its help we can transfer all properties enjoyed by 
the Sobolev-Slobodeckii spaces~$W_{\HAcoW p}^s\HAhM$ to the weighted 
spaces~$W_{\HAcoW p}^{s,\HAlda}\HAMr$ (direct proofs, not 
using this isomorphism, are given in~\cite{Ama12b}). 

\par 
There are also simple relations between the differential operators 
$\HAtdiv$  and~$\HAgrad$ on~$\HAMg$ 
and
$\HAtdiv_{\hat g}$  and~$\HAgrad_{\hat g}$ on~$\HAhMhg$, respectively. 
In fact, setting 
\HAhb{\hat a:=\rho^{-2}a} we find (cf.\ \cite[(5.19)]{Ama14a}) 
$$ 
\HAtdiv(a\HAgrad u) 
=\HAtdiv_{\hat g}(\hat a\cdot\HAgrad_{\hat g}u) 
+(u\hat a\cdot\rho^{-1}\HAgrad_{\hat g} 
\rho\HAsn\HAgrad_{\hat g}u)_{\hat g}\;. 
$$ 
Note that Theorem~\ref{HAthm-I.A.P} applies to the operator 
$$ 
\hat\HAcA u:=-\HAtdiv_{\hat g}(\hat a\cdot\HAgrad_{\hat g}u) 
$$ 
provided it is regularly uniformly strongly elliptic on~$\HAhMhg$. This is 
equivalent to the assumption that (\ref{HAI.A.d}) be \emph{regularly 
uniformly strongly \hbox{$\rho$-elliptic}}. By this we mean that the 
following conditions are satisfied: 
$$ 
\HAbal 
{}
\rm{(i)} \qquad &\bigl(a(q)\cdot X\HAbsn X\bigr)_{g(q)} 
                 \sim\rho^2(q)\,|X|_{g(q)}^2, 
                 \ \ X\in T_qM,
                 \ \ q\in M\;.\\ 
\noalign{\vskip-1\jot} 
\rm{(ii)}\qquad &|\HAna a|_{g_1^2}\leq c\rho\;.
\HAeal 
$$ 
An elaboration of these facts leads to the following optimal 
well-posedness result for degenerate parabolic equations on singular 
manifolds. It is a special case of Theorem~5.2 of~\cite{Ama14a}. 
\begin{theorem}\label{HAthm-I.D} 
Let $\HAMg$ be singular of type~$\rho$ and 
\HAhb{p\notin\{3/2,\ 3\}}. Suppose $\HAcA$~is regularly uniformly strongly 
\hbox{$\rho$-elliptic} and 
\HAhb{\HAlda\in\HABR}. Then problem~(\ref{HAI.A.P}) has for each 
$$ 
(f,u_0)\in 
L_p\bigl([0,T],L_p^\HAlda\HAMr\bigr) 
\times W_{\HAcoW p,\HAcB}^{2-2/p,\HAlda}\HAMr 
$$ 
a unique solution 
$$ 
u\in 
L_p\bigl([0,T],W_{\HAcoW p,\HAcB}^{2,\HAlda}\HAMr\bigr) 
\cap W_{\HAcoW p}^1\bigl([0,T],L_p^\HAlda\HAMr\bigr)\;. 
\HAnpb 
$$ 
The map 
\HAhb{(f,u_0)\HAmt u} is linear and continuous. 

\par 
Equivalently: let 
$$ 
A^\HAlda:=\HAcA\HAsn W_{\HAcoW p,\HAcB}^{2,\HAlda}\HAMr\;. 
$$ 
Then $-A^\HAlda$~generates a strongly continuous analytic semigroup 
on~$L_p^\HAlda\HAMr$ and has the property of maximal regularity. 
\end{theorem}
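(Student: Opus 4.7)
The plan is to reduce the degenerate problem on~$\HAMg$ to a non-degenerate one on the uniformly regular manifold~$\HAhMhg$, apply Theorem~\ref{HAthm-I.A.P}, and then transfer the conclusion to the weighted Sobolev scale by means of the isomorphism $T_\HAlda\HAsco W_{\HAcoW p}^s\HAhM\HAra W_{\HAcoW p}^{s,\HAlda}\HAMr$, $u\HAmt\rho^{-\HAlda+m/p}u$, recorded in the introduction.

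First I would verify that the rescaled operator $\hat\HAcA u:=-\HAtdiv_{\hat g}(\hat a\HAbtdot\HAgrad_{\hat g}u)$ with $\hat a:=\rho^{-2}a$ is regularly uniformly strongly elliptic on~$\HAhMhg$. Since $\hat g$-norms and $g$-norms differ by the factor~$\rho$, condition~(i) of $\rho$-ellipticity translates under the substitution $a\HAmt\hat a$ into uniform $\hat g$-ellipticity of~$\hat a$; combined with the structural estimates $|\HAna^j\rho|\leq c_j\rho^{1-j}$ that every singularity function must satisfy, condition~(ii) delivers uniform boundedness of~$\hat a$ and of its $\hat g$-covariant derivative. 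The Dirichlet part of~$\HAcB$ is unchanged by the conformal rescaling, while the Neumann part only acquires a factor~$\rho$ coming from the rescaling of the inward unit normal, a factor that is absorbed into~$\hat a$; consequently $\HAcB u=0$ iff $\hat{\HAcB}u=0$.

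Next I would invoke Theorem~\ref{HAthm-I.A.P} on the uniformly regular manifold~$\HAhMhg$ for the operator~$\hat\HAcA$ under the boundary condition~$\hat{\HAcB}$. This yields unique solvability in $L_p\bigl([0,T],W_{\HAcoW p,\hat{\HAcB}}^2\HAhM\bigr)\cap W_{\HAcoW p}^1\bigl([0,T],L_p\HAhM\bigr)$ together with maximal regularity of $-\hat\HAcA$ on~$L_p\HAhM$. Conjugating by the isomorphism~$T_\HAlda$, which intertwines the unweighted and weighted Sobolev scales, commutes with the initial-time trace, and matches $\hat{\HAcB}$ with~$\HAcB$, reproduces the same statement in the weighted spaces $L_p^\HAlda\HAMr$ and $W_{\HAcoW p,\HAcB}^{2,\HAlda}\HAMr$.

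The conjugated operator $T_\HAlda\hat\HAcA T_\HAlda^{-1}$ differs from~$A^\HAlda$ by the lower-order correction term displayed in the introduction, namely the first-order operator with coefficients proportional to $\hat a\HAbtdot\rho^{-1}\HAgrad_{\hat g}\rho$. The main obstacle is to control this remainder as a genuine lower-order perturbation on the weighted scale: uniform boundedness of $|\rho^{-1}\HAgrad_{\hat g}\rho|_{\hat g}$ on~$\HAhMhg$ is yet another manifestation of the structural definition of singularity function, whence the remainder is bounded from $W_{\HAcoW p}^{1,\HAlda}\HAMr$ into~$L_p^\HAlda\HAMr$. The standard perturbation theorem for generators of analytic semigroups with maximal regularity, applicable to relatively bounded perturbations of strictly lower order, then transfers these properties from $-\hat\HAcA$ to~$-A^\HAlda$ and, together with the transport of the solution classes under~$T_\HAlda$, completes the proof.
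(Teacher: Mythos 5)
Your reduction---conformal rescaling to the uniformly regular manifold $(\hat M,\hat g)$ with $\hat g=g/\rho^2$, application of Theorem~\ref{HAthm-I.A.P} there, transfer by the weight isomorphism $u\mapsto\rho^{-\lambda+m/p}u$ onto the scale $W_{p}^{s,\lambda}(M;\rho)$, and absorption of the resulting first-order terms (controlled by the boundedness of $|\rho^{-1}\operatorname{grad}_{\hat g}\rho|_{\hat g}$) as relatively compact lower-order perturbations---is precisely the route the paper indicates: it presents this theorem as a special case of Theorem~5.2 of~\cite{Ama14a} and sketches exactly this conformal/weighted-space machinery in the surrounding text of the introduction. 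Your proposal is correct in outline and matches that approach; the paper itself carries out none of the detailed verifications, delegating them to the cited reference.
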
 
In order to render this theorem useful we have to provide sufficiently large 
and interesting classes of singular manifolds. This is the aim of the 
following considerations. 

\par 
Let $(B,g_B)$ be as in definition~(\ref{HAI.F}). If we choose there 
\HAhb{\HAal<0}, then we call the resulting Riemannian submanifold 
of~$\HABR^{1+d}$ \emph{infinite model \hbox{$\HAal$-cusp} over}~$B$ and 
denote it by~$C_{\HAiy,\HAal}(B)$ and its metric 
by~$g_{C_{\HAiy,\HAal}(B)}$. Similarly as for funnels, an open 
subset~$V$ of~$M$ is an \emph{infinite \hbox{$\HAal$-cusp} over}~$B$ 
of~$\HAMg$ if $\HAVg$ is isometric to an infinite model 
\hbox{$\HAal$-cusp} 
\HAhb{\bigl(C_{\HAiy,\HAal}(B),g_{C_{\HAiy,\HAal}(B)}\bigr)}. 
It is \emph{smooth} if $B$~is compact. 

\par 
We consider the following conditions: 
\HAbeq \label{HAI.MM} 
 \HAbal 
 \\ 
 \noalign{\vskip-7\jot} 
 \rm{(i)} \qquad     &\HAcMg
                      \text{ is an $m$-dimensional Riemannian manifold}\;.\\
 \noalign{\vskip-1\jot} 
 \rm{(ii)} \qquad    &\HAmf{V}\text{ is a finite set of pairwise disjoint 
                     infinite smooth $\HAal$-cusps }V\\ 
 \noalign{\vskip-1\jot} 
                     &\text{of }\HAcMg\;.\\                      
 \noalign{\vskip-1\jot} 
 \rm{(iii)} \qquad   &V_0\text{ is an open subset of $\HAcM$ such that 
                      $\{V_0\}\cup\HAmf{V}$ is a covering of }\HAcM\\ 
 \noalign{\vskip-1\jot} 
                     &\text{and $\HAcMg$ is uniformly regular 
                      on\footnotemark\ }V_0\;.\\ 
 \noalign{\vskip-1\jot} 
 \rm{(iv)} \qquad    &\HAmfGa\text{ is a finite set of pairwise 
                      nonintersecting compact connected}\\ 
 \noalign{\vskip-1\jot} 
                     &\text{Riemannian submanifolds $\HAGa$ of $\HAcM$ 
                      without boundary and}\\ 
 \noalign{\vskip-1\jot} 
                     &\text{codimension at least $1$ such that 
                      $\HAGa\HAis\HApl\HAcM$ if }
                      \HAGa\cap\HApl\HAcM\neq\HAes\\ 
 \noalign{\vskip-1\jot} 
                     &\text{and $\HAGa\cap V=\HAes$ for $\HAGa\in\HAmfGa$ 
                      and }V\in\HAmf{V}\;.\\
 \noalign{\vskip-1\jot} 
 \rm{(v)} \qquad     &\HAba_\HAGa\geq1\text{ for }\HAGa\in\HAmfGa. 
 \HAeal
\HAeeq\footnotetext{Cf.\ the localized definitions in 
                      Section~\ref{HAsec-N}.}%
We set 
$$ 
\HAmfal:=\{\,\HAal_V\ ;\ V\in\HAmf{V}\,\} 
\HAqb \HAmfba:=\{\HAmf\HAba_\HAGa\ ;\ \HAGa\in\HAmfGa\,\} 
\HAqb \HAcS:={\textstyle\bigcup_{\HAGa\in\HAimfGa}}\HAGa\;, 
$$ 
and 
$$ 
\HAMg:=\bigl(\HAcM\HAssm\HAcS,\ g\HAsn(\HAcM\HAssm\HAcS)\bigr)\;. 
$$ 
Then $\HAmfal$, resp.~$\HAmfba$, is the \emph{cuspidal weight} (vector) 
for~$\HAmf{V}$, resp.~$\HAmfGa$, and $\HAcS$ the (\emph{compact}) 
\emph{singularity set} of~$M$. Furthermore, $\HAMg$ is said to be a 
\emph{Riemannian manifold with smooth cuspidal singularities of type} 
$[\HAmf{V},\HAmfal,\HAmfGa,\HAmfba]$. 

\par 
If 
\HAhb{\HAmf{V}=\HAes} and 
\HAhb{\HAmfGa=\HAes}, then 
\HAhb{\HAcM=V_0} and 
\HAhb{\HAcMg=\HAMg} is uniformly regular. Thus we assume henceforth that 
\HAhb{\HAmf{V}\cup\HAmfGa\neq\HAes}. If 
\HAhb{\HAmf{V}=\HAes}, then 
\HAhb{V_0=\HAcM} and $\HAcMg$ is uniformly regular. By the preceding results 
this is the case, in particular, if $\HAcM$~is compact or $\HAcMg$~has 
tame ends. However, $\HAMg$~is not uniformly regular. 

\par 
By its definition, $\HAMg$~is a Riemannian submanifold of the ambient 
manifold~$\HAcMg$. In turn, the latter is obtained from~$\HAMg$ by setting 
\HAhb{\HAcM:=M\cup\HAcS} and defining~$g_\HAcM$ by smooth extension of~$g$. 
The crucial point of this procedure is that $(\HAcM,g_\HAcM)$ is a 
Riemannian manifold as well. To avoid technical subtleties we prefer to take 
$\HAcMg$ as initial object. Due to the intimate connection between 
$\HAcMg$ and~$\HAMg$ there is often no need to mention $\HAcMg$ 
explicitly. 

\par 
For 
\HAhb{V\in\HAmf{V}} we fix 
\HAhb{q=q_V\in\bar V\HAssm V} and set 
\HAbeq \label{HAI.dV} 
\HAda_V=\HAda_{V,q}:=1+\HAdist_\HAcM(\cdot,q)\HAsco V\HAra[1,\HAiy) 
\HAeeq 
where $\HAdist_\HAcM$~is the Riemannian distance in~$\HAcMg$. Note that 
\HAhb{\sup\HAda_V=\HAiy} and 
\HAhb{\HAdist_\HAcM(\cdot,q)=\HAdist_M(\cdot,q)} on~$V$. 

\par 
For 
\HAhb{\HAGa\in\HAmfGa} there exists an open neighborhood~$\HAcU_\HAGa$ 
of~$\HAGa$ in~$\HAcM$ with 
\HAhb{\HAcU_\HAGa\cap\tilde\HAGa=\HAes} for 
\HAhb{\tilde\HAGa\in\HAmfGa} satisfying 
\HAhb{\tilde\HAGa\neq\HAGa}, and such that 
$\HAdist_\HAcM(\cdot,\HAGa)$ is a well-defined smooth function. Then 
\HAhb{U_\HAGa:=\HAcU_\HAGa\HAssm\HAGa} is open in~$M$ and the 
restriction~$\HAda_\HAGa$ of $\HAdist_\HAcM(\cdot,\HAGa)$ to~$U_\HAGa$ 
is smooth and everywhere positive. 

\par 
The following theorem is the main result of this paper as far as singular 
manifolds are concerned. Its proof is given in 
Section~\ref{HAsec-M}. Here and in similar situations obvious 
interpretations have to be used if either $\HAmf{V}$ or~$\HAmfGa$ 
is empty. 
\begin{theorem}\label{HAthm-I.C} 
Let $\HAMg$ be a Riemannian manifold with smooth cuspidal singularities 
of type $[\HAmf{V},\HAmfal,\HAmfGa,\HAmfba]$. Fix 
\HAhb{\rho\in C^\HAiy\bigl(M,(0,1]\bigr)} such that 
\HAhb{\rho\sim\HAmf{1}} on~$V_0$, 
\ \HAhb{\rho\sim\HAda_V^{\HAal_V}} on 
\HAhb{V\in\HAmf{V}}, and 
\HAhb{\rho\sim\HAda_\HAGa^{\HAba_\HAGa}} near 
\HAhb{\HAGa\in\HAmfGa}. Then $\HAMg$ is singular of type~$\rho$. 
\end{theorem}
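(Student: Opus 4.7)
The strategy is a localize--rescale--paste argument. Since uniform regularity is a local property (with uniform constants), and the pieces $\{V_0\}\cup\HAmf{V}\cup\{U_\HAGa\}_{\HAGa\in\HAimfGa}$ form an open cover of $M$ with the intersections confined to a relatively compact set (the $V_0$-part), it suffices to produce uniformly regular atlases on each piece for the rescaled metric $\hat g:=g/\rho^2$, with uniformly bounded coordinate changes on the overlaps, and then invoke the localized characterization of uniform regularity from Section~\ref{HAsec-N}. First, I would verify that $\rho$ meets the structural conditions (\ref{HAN.sd}) required of a singularity function; this is immediate on $V_0$ where $\rho\sim\HAmf{1}$, and on the cuspidal and $\HAGa$-neighborhoods it follows by direct computation using the fact that $\HAda_V$ and $\HAda_\HAGa$ are smooth with derivatives controlled by the distance itself.

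On $V_0$: since $\rho\sim\HAmf{1}$ uniformly and $\HAcMg$ is uniformly regular on~$V_0$, the conformal factor $\rho^{-2}$ and its covariant derivatives are bounded in any uniformly regular chart (shrunk if necessary to stay inside $V_0$), so $\hat g\sim g$ with $\HAna^k\hat g$ uniformly bounded. Hence the inherited atlas is uniformly regular for~$\hat g$.

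On a cusp $V\cong F_{\HAiy,\HAal}(B)$ with $\HAal<0$ and $B$ compact: parameterize $V$ by $(t,y)\in(1,\HAiy)\times B$ via the embedding $(t,y)\HAmt(t,t^\HAal y)$. A direct computation yields the induced metric
$$
g_V=\bigl(1+\HAal^2 t^{2\HAal-2}|y|^2\bigr)\,\D t^2
+2\HAal t^{2\HAal-1}(y\HAsn\D y)\,\D t+t^{2\HAal}\,g_B(y),
$$
with error terms whose covariant derivatives are bounded uniformly in $t$, and $\HAda_V\sim t$ on~$V$. With $\rho\sim t^{\HAal_V}$, introduce the new ``cylindrical'' coordinate $s:=t^{1-\HAal}/(1-\HAal)$ (legitimate since $1-\HAal>1$); this conjugates $\hat g$ to a uniformly small perturbation of the product metric $\D s^2+g_B$ on $[s_0,\HAiy)\times B$. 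Because $B$ is compact and $[s_0,\HAiy)$ is uniformly regular, Theorem~\ref{HAthm-P.MM} (products) and Example~\ref{HAexa-I.ex}(f) (isometry invariance) yield a uniformly regular atlas for $(V,\hat g)$, and the perturbation terms (which decay as $t^{-2}$ in any covariant derivative) do not affect uniform regularity.

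Near each $\HAGa\in\HAmfGa$: use the tubular neighborhood theorem / normal exponential map to parameterize $U_\HAGa$ by $(\HAga,r,\HAom)$ with $\HAga\in\HAGa$, $r\in(0,\HAve)$, $\HAom$ ranging over the unit sphere bundle $SN\HAGa$ of the normal bundle (with obvious modifications if $\text{codim}\HAGa=1$ or if $\HAGa\HAis\HApl\HAcM$). Writing $g$ in these coordinates via the Gauss lemma gives $g=\D r^2+r^2 h_r+g_\HAGa+O(r)$, where $h_r$ is a smooth family of metrics on $SN\HAGa$ extending $g_{S^{k-1}}$. With $\rho\sim r^{\HAba_\HAGa}$ and $\HAba_\HAGa\geq1$, make the change of variable $s:=\int r^{-\HAba_\HAGa}\D r$ (so $s\to\HAiy$ as $r\HAra0$, using $\HAba_\HAGa\geq1$). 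The rescaled metric $\hat g$ then has the structure of a (mildly perturbed) product/warped product over the compact fiber $SN\HAGa\times\HAGa$, analogous to the cusp case but with the singular end at $r=0$ rather than at infinity. Exactly as in step~3, this yields a uniformly regular atlas for $(U_\HAGa,\hat g)$.

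The gluing is straightforward: the overlaps $V_0\cap V$ and $V_0\cap U_\HAGa$ live in a relatively compact region of~$\HAcM$ (for the $V$'s) or come from the compactness of $\HAGa$, so only finitely many overlap charts appear and they contribute uniformly bounded transition data. Assembling the three families above gives a uniformly regular atlas $\HAgK$ for $(M,\hat g)$, and the coordinate representations $\HAka_*\hat g$ are uniformly equivalent to Euclidean with uniformly bounded derivatives by the three pieces above. Hence $\HAhMhg$ is uniformly regular, i.e.\ $\HAMg$ is singular of type $\rho$. The hardest step is the analysis near the $\HAGa$'s, especially in codimension $\geq2$ where the warping factor $r^{2-2\HAba_\HAGa}$ must be tracked through the $s$-substitution to confirm that the fiber direction becomes uniformly regular rather than degenerate; the condition $\HAba_\HAGa\geq1$ is precisely what makes this work and is the reason for assumption (v) of~(\ref{HAI.MM}).
\qed
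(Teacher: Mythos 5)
Your plan is correct and follows essentially the same route as the paper: straighten each cusp/wedge by the gauge substitution $s=\int\D t/R(t)$ (this is exactly Lemma~\ref{HAlem-C.CF} and, for the induced Euclidean metric with its cross terms, Proposition~\ref{HApro-E.P}), use Theorem~\ref{HAthm-P.MM} for the $\HAGa$-factor and Lemma~\ref{HAlem-P.f} for isometry invariance, and patch with the partition-of-unity argument of Lemma~\ref{HAlem-P.P} (packaged in the paper as Theorem~\ref{HAthm-M.S} together with Lemmas \ref{HAlem-M.R} and~\ref{HAlem-M.V}). The one place your sketch understates the work is near~$\HAGa$: after the $s$-substitution the rescaled metric is \emph{not} a mild perturbation of a product, since the fiber directions carry the unbounded warpings $r^{2-2\HAba}g_\HABS$ and $r^{-2\HAba}g_\HAGa$; this is absorbed precisely by the $\rho$-dependent shrinking of the $\HAGa$-charts built into the proof of Theorem~\ref{HAthm-P.MM}, so the discrepancy is one of packaging rather than substance.
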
 
\begin{corollary}\label{HAcor-I.C} 
Theorem~\ref{HAthm-I.D} applies whenever $\HAMg$ is a Riemannian manifold 
with cuspidal singularities. 
\end{corollary}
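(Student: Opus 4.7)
The plan is to construct a smooth function $\rho\colon M\HAra(0,1]$ meeting the three $\sim$-equivalences demanded by Theorem~\ref{HAthm-I.C}; the corollary then follows in one step, since that theorem gives $\HAMg$ singular of type~$\rho$ and Theorem~\ref{HAthm-I.D} applies to any such manifold.

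I~would build~$\rho$ by a partition-of-unity argument. For each $\HAGa\in\HAmfGa$ pick shrunken neighborhoods $\HAcU_\HAGa''\HAis\HAcU_\HAGa'\HAis\HAcU_\HAGa$, and put $W_0:=V_0\HAssm\bigcup_\HAGa\HAcU_\HAGa''$; then $\{W_0\}\cup\HAmf{V}\cup\{U_\HAGa\}_{\HAGa\in\HAmfGa}$ is an open cover of~$M$. Choose a subordinate smooth partition of unity $\{\chi_0,(\chi_V),(\chi_\HAGa)\}$, arranged so that $\chi_V\equiv1$ outside a compact collar where the cusp~$V$ meets~$V_0$, and $\chi_\HAGa\equiv1$ on $\HAcU_\HAGa''\HAssm\HAGa$. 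Set
\[
\rho\;:=\;c\,\Bigl(\chi_0\cdot\HAmf{1}
 +\sum_{V\in\HAmf{V}}\chi_V\,\HAda_V^{\HAal_V}
 +\sum_{\HAGa\in\HAmfGa}\chi_\HAGa\,\HAda_\HAGa^{\HAba_\HAGa}\Bigr),
\]
with $c>0$ chosen so small that $\rho\le 1$ everywhere. Each summand is bounded: $\HAal_V<0$ combined with $\HAda_V\ge1$ yields $\HAda_V^{\HAal_V}\le1$, while $\HAba_\HAGa\ge1$ and $\HAda_\HAGa$ is bounded on~$U_\HAGa$. Smoothness and strict positivity of~$\rho$ on~$M$ are automatic.

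Verifying the three equivalences is the remaining step and is also where the main obstacle lies. Where only a single cutoff is nonzero, $\rho$ equals (up to the constant~$c$) either $\HAmf{1}$, $\HAda_V^{\HAal_V}$, or $\HAda_\HAGa^{\HAba_\HAGa}$, respectively. On the transition zones $W_0\cap V$ and $W_0\cap U_\HAGa$, property~(iii) of~(\ref{HAI.MM}) together with the shrinking construction ensures these overlaps are relatively compact in~$\HAcM$, so both $\HAda_V$ and $\HAda_\HAGa$ are bounded above and below by positive constants there; the bracket defining~$\rho$ is then uniformly bounded, extending $\rho\sim\HAmf{1}$ to all of~$V_0$, $\rho\sim\HAda_V^{\HAal_V}$ to all of~$V$, and $\rho\sim\HAda_\HAGa^{\HAba_\HAGa}$ to a full neighborhood of~$\HAGa$. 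The delicate point is precisely this bookkeeping: the cutoffs must be organized so that each model factor governs $\rho$ exactly where its asymptotic is demanded, without the transition tails spoiling the decay at infinity in the cusps or upon approach to~$\HAcS$. The relative compactness supplied by~(\ref{HAI.MM})(iii) reduces the entire verification to a compactness argument on finitely many bounded overlaps, and once~$\rho$ is in hand, Theorems~\ref{HAthm-I.C} and~\ref{HAthm-I.D} close the corollary.
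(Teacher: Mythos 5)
Your logical route is exactly the paper's: the corollary is an immediate consequence of Theorem~\ref{HAthm-I.C} (which makes $\HAMg$ singular of type~$\rho$) together with Theorem~\ref{HAthm-I.D} (which applies to any manifold singular of type~$\rho$), and the paper offers no further argument. The extra work you do --- constructing a concrete $\rho$ with the three equivalences by a partition of unity --- is not needed for the corollary itself, because the existence of such a~$\rho$ is already produced inside the proof of Theorem~\ref{HAthm-I.C}: Theorem~\ref{HAthm-M.S} and Lemma~\ref{HAlem-P.P} patch together the local singularity functions supplied by Lemmas \ref{HAlem-M.V} and~\ref{HAlem-M.R} (which satisfy $\rho\sim\HAda_V^{\HAal_V}$ and $\rho\sim\HAda_\HAGa^{\HAba_\HAGa}$) with precisely the kind of partition of unity you write down. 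Two caveats about your explicit formula if you insist on it. First, smoothness is \emph{not} automatic: $\HAda_V=1+\HAdist_\HAcM(\cdot,q)$ is in general only Lipschitz (cut locus), so $\chi_V\,\HAda_V^{\HAal_V}$ need not be $C^\HAiy$ where $\chi_V\neq0$; since Theorem~\ref{HAthm-I.C} requires $\rho\in C^\HAiy\bigl(M,(0,1]\bigr)$, you must replace $\HAda_V$ by a smooth function equivalent to it (e.g.\ the pull-back of the coordinate~$t$ under the modeling isometry of the cusp), which is how Lemma~\ref{HAlem-P.P} avoids the issue by summing smooth local data $\HAvp_\HAal\rho_\HAal$. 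Second, the relative compactness of the overlaps of~$V_0$ with the cusps is not literally contained in (\ref{HAI.MM})(iii) and has to be arranged by shrinking~$V_0$, as is done in the proof of Theorem~\ref{HAthm-M.S}. With these repairs your argument is sound and is in substance the same as the paper's.
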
 
It follows from the considerations in the main body of this paper that the 
special choice of~$\rho$ is of no importance. In fact: if $\rho$~is 
replaced by~$\tilde\rho$ with 
\HAhb{\tilde\rho\sim\rho}, then 
\HAhb{(M,\ g/\rho^2)} and 
\HAhb{(M,\ g/\tilde\rho^2)} are equivalent in the sense defined in 
Section~\ref{HAsec-N}. In particular, the Sobolev-Slobodeckii 
spaces $W_{\HAcoW p}^s\HAMr$ and $W_{\HAcoW p}^s\HAMtr$ differ only 
by equivalent norms. Thus merely the behavior of~$\rho$ `near 
infinity along~$V$', for 
\HAhb{V\in\HAmf{V}}, and near 
\HAhb{\HAGa\in\HAmfGa}, where $\rho$~approaches zero, does matter. 
Notably, this shows that the choice of 
\HAhb{q_V\in\bar V\HAssm V}, as well as the special form 
of~$\rho$ on compact subsets of~$M$, is irrelevant. 

\par 
It remains to explain why the naming `manifold with smooth cuspidal 
singularities' has been chosen. This is clear if 
\HAhb{\HAmfGa=\HAes}, but needs elucidation otherwise. The following 
considerations contribute to it. But first we introduce some notation. 

\par 
For 
\HAhb{d\in\HABN^\times} we denote by~$\HABB^d$ the open unit ball 
in~$\HABR^d$, by~$\HABS^{d-1}$ its boundary, the unit sphere, and by 
\HAhb{\HABH^d:=\HABR^+\times\HABR^{d-1}} the closed right half-space, where 
\HAhb{\HABR^0:=\{0\}}. Then 
\HAhb{\HABB_+^d:=\HABB^d\cap\HABH^d} and 
\HAhb{\HABS_+^{d-1}:=\HABS^{d-1}\cap\HABH^d} are the right half-ball 
and half-sphere, respectively. Note that 
\HAhb{\HApl\HABB_+^d=\{0\}\times\HABB^{d-1}\cong\HABB^{d-1}} and 
\HAhb{\HApl\HABS_+^{d-1}=\{0\}\times\HABS^{d-2}} if 
\HAhb{d\geq2} and 
\HAhb{\HApl\HABS^0=\HAes}. Lastly, 
\HAhb{\HAthBB:=\HABB\HAssm\{0\}} for 
\HAhb{\HABB\in\{\HABB^d,\HABB_+^d\}}. 

\par 
Suppose 
\HAhb{1\leq\ell\leq m} and 
\HAhb{\HABS\in\{\HABS^{\ell-1},\HABS_+^{\ell-1}\}}. Given 
\HAhb{\HAal\geq1}, 
\HAbeq \label{HAI.Ca} 
C_\HAal(\HABS)=C_{\HAal,\ell}(\HABS):=\bigl\{\,(t,t^\HAal y) 
\ ;\ 0<t<1,\ y\in\HABS\,\bigr\}\HAis\HABR^{1+\ell} 
\HAeeq  
is an 
\hbox{$\ell$-dimensional} submanifold of~$\HABR^{1+\ell}$ and 
$$ 
\HAvp_\HAal\HAsco C_\HAal(\HABS)\HAra(0,1)\times\HABS 
\HAqb (t,t^\HAal y)\HAmt(t,y) 
$$ 
is the `canonical stretching diffeomorphism'. 
Observe that 
\HAhb{\HApl C_\HAal(\HABS)=\HAes} if 
\HAhb{\HABS=\HABS^{\ell-1}} or 
\HAhb{\ell=1}, and 
\HAhb{\HApl C_\HAal(\HABS_+^{\ell-1})=C_{\HAal,\ell-1}(\HABS^{\ell-2})} 
otherwise. 

\par 
\noindent 
\begin{minipage}[c]{230pt} 
\hspace{4mm} 
$C_\HAal(\HABS)$~is~a (blunt) \emph{model} 
\hbox{$\HAal$\emph{-cusp}}, 
respectively \emph{cone} if 
\HAhb{\HAal=1}, which is \emph{spherical} if 
\HAhb{\HABS=\HABS^{\ell-1}} and \emph{semi-spherical} otherwise. 
In Fig.~2 there is depicted~a (rotated) semi-circular model \hbox{$2$-cusp} 
in~$\HABR^3$. Its boundary consists of two disjoint one-dimensional 
generators. 

\par 
\hspace{4mm} 
We endow 
\HAhb{C_\HAal=C_\HAal(\HABS)} with the Riemannian metric~$g_{C_\HAal}$ 
induced by the natural embedding 
\HAhb{C_\HAal\HAhr\HABR^{1+\ell}}. Then $g_{C_\HAal}$~is equivalent to the 
pull-back by~$\HAvp_\HAal$ of the metric 
\HAhb{\D t^2+t^{2\HAal}g_\HABS} of 
\HAhb{(0,1)\times\HABS}, where $g_\HABS$~is the standard 
metric induced by 
\HAhb{\HABS\HAhr\HABR^\ell}. 
\end{minipage}
\hfil 
\begin{minipage}[c]{80pt} 
\vskip-1\baselineskip 
\[
\begin{picture}(78,105)(0,0)
\put(0,0){\makebox(0,0)[bl]%
{\includegraphics[height=105pt]{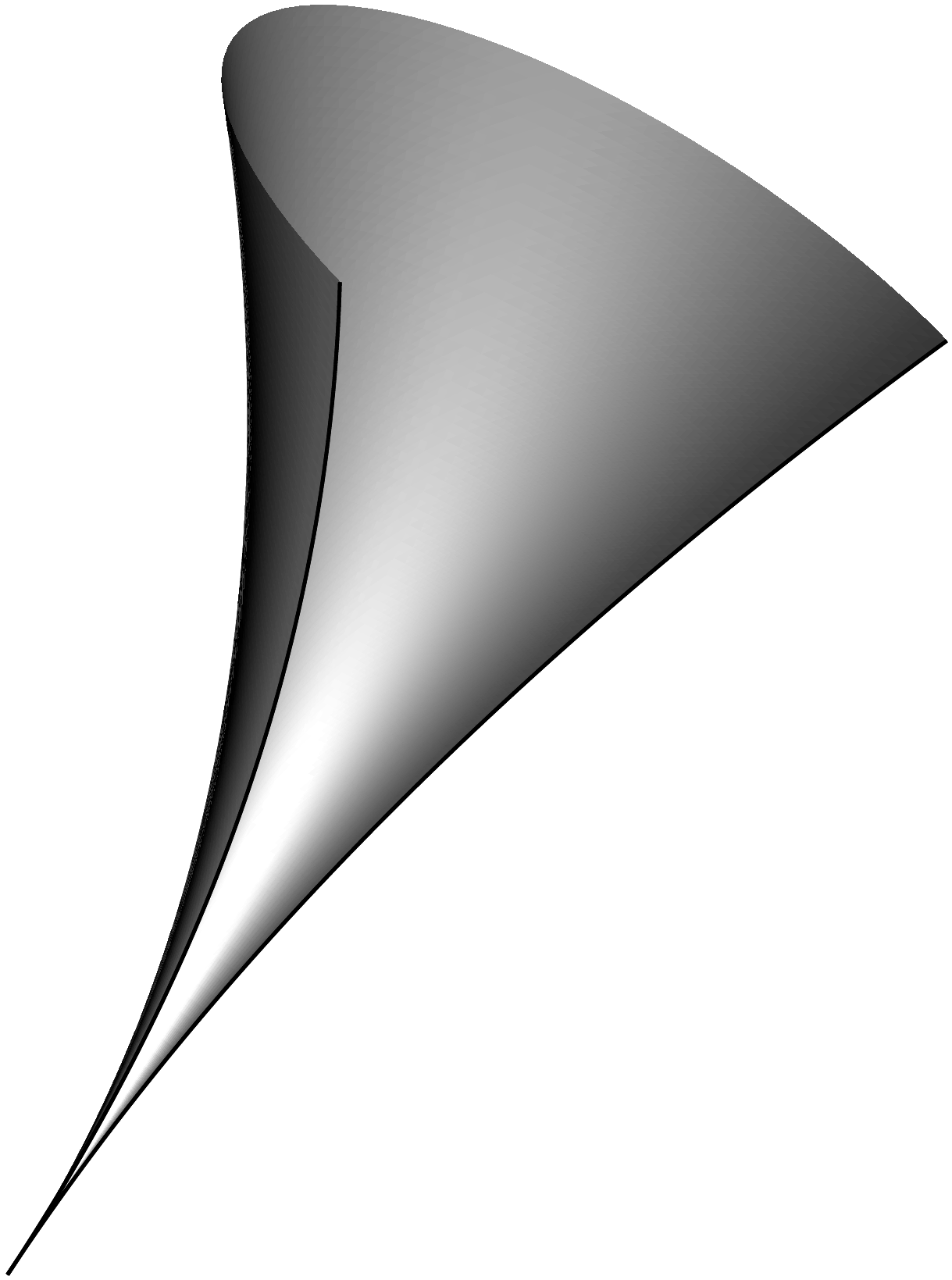}}} 
\put(50,0){\makebox(0,0)[b]{\HAhh{\small Fig.~2}}}
\end{picture}
\]
\end{minipage}
\hfil

\par 
Assume $(\HAGa,g_\HAGa)$ is an 
\HAhb{(m-\ell)}-dimensional compact connected Riemannian manifold 
without boundary. Then 
\HAhb{W_\HAal:=C_\HAal\times\HAGa}, whose metric is 
\HAhb{g_{W_\HAal}:=g_{C_\HAal}+g_\HAGa}, is a \emph{model 
\HAhb{(\HAal,\HAGa)}-wedge} which is also called \emph{spherical} if 
$C_\HAal$~is so, and \emph{semi-spherical} otherwise. If 
\HAhb{m=\ell}, then $\HAGa$~is a one-point space, $W_\HAal$~is 
naturally identified with~$C_\HAal$, and all references to and occurrences 
of~$\HAGa$ are to be disregarded. Thus every cusp is a wedge also. 

\par 
Let $U$ be open in~$M$. Then $(U,g)$, more loosely:~$U$, is~a 
\emph{spherical}, resp.\ \emph{semi-spherical}, \emph{cuspidal end of type 
\HAhb{(\HAal,\HAGa)} of}~$\HAMg$ if there exists an isometry~$\Phi_\HAal$ 
from $(U,g)$ onto a spherical, resp.\ semi-spherical, model 
\HAhb{(\HAal,\HAGa)}-wedge $(W_\HAal,g_{W_\HAal})$. In this case 
$U$~is \emph{represented by} 
\HAhb{[\Phi_\HAal,W_\HAal,g_{W_\HAal}]} or, simply, by~$\Phi_\HAal$. 

\par 
\noindent 
\begin{minipage}[c]{165pt}  
\hspace{4mm} 
Now we return to the setting of Theorem~\ref{HAthm-I.C} and consider 
a particular simple constellation. Namely, we assume that $M$~is obtained 
from a three-dimensional ellipsoid~$\HAcM$ in~$\HABR^3$ by removing an 
equator~$\HAGa$.  Its metric~$g$ is induced by the natural embedding 
\HAhb{\HAcM\HAhr\HABR^3}. In this case 
\HAhb{\HAmf{V}=\HAes} and 
\HAhb{\HAmfGa=\{\HAGa\}}. 
\end{minipage} 
\hfil 
\begin{minipage}[c]{92pt} 
\vspace*{-1\baselineskip}
\[
\begin{picture}(90,92)(0,-17)
\put(0,-1){\makebox(0,0)[bl]%
{\includegraphics[height=75pt]{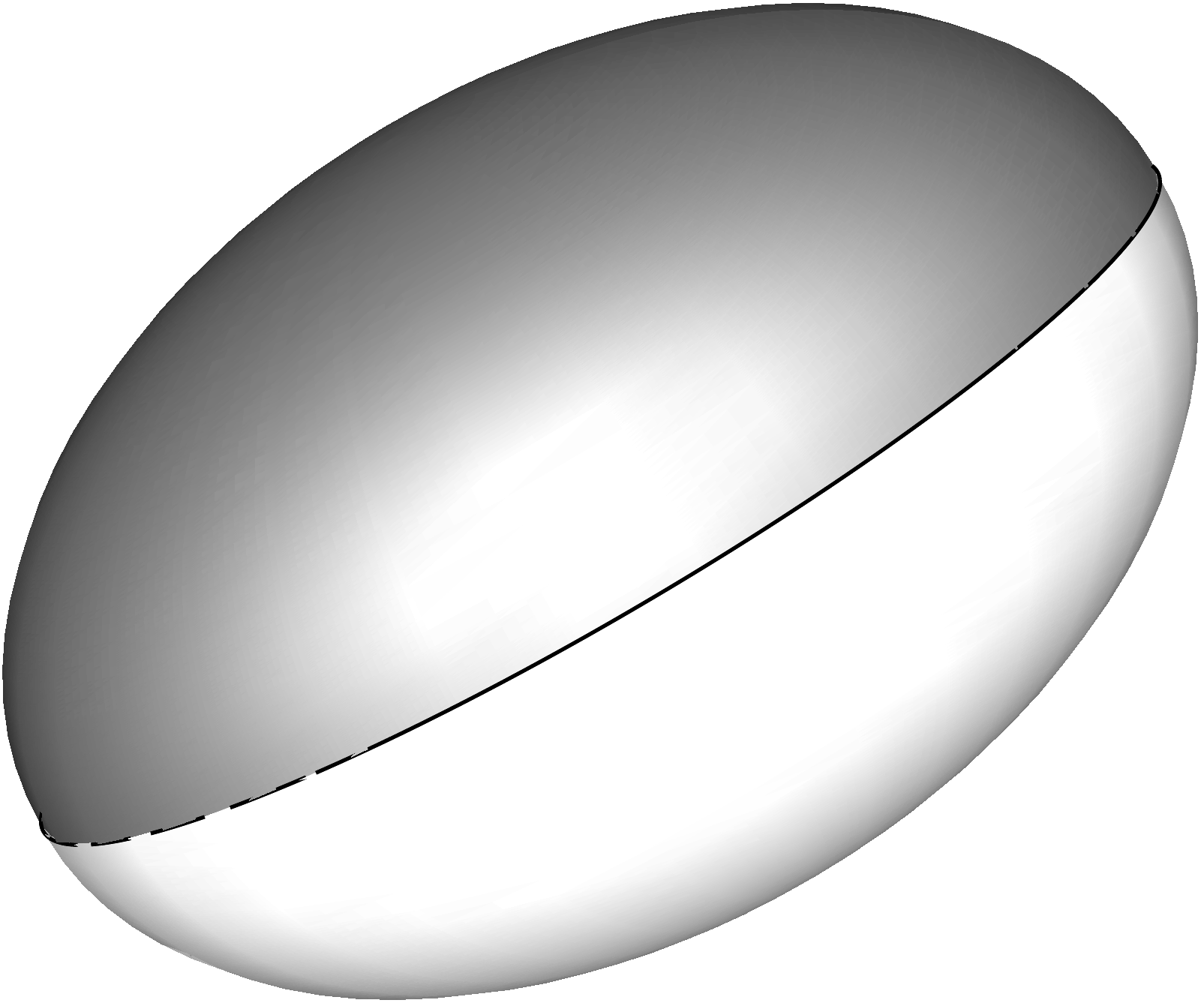}}}%
\put(45,-10){\makebox(0,0)[t]{\HAhh{\small Fig.~3}}}
\end{picture}
\] 
\end{minipage}
\hfil

\par 
On one component,~$\HApl_0M$, of the boundary of~$M$ we put 
Dirichlet conditions 
(e.g.\ on the dark side of Fig.~3) and Neumann conditions on the other 
one,~$\HApl_1M$. Note that $\HApl_0M$ and~$\HApl_1M$ meet in~$\HAcM$ 
along~$\HAGa$, but `do not see each other' in~$M$. In other words, 
$\HApl_0M$ and~$\HApl_1M$ are both open and closed in~$\HApl M$. 

\par 
\noindent 
\begin{minipage}[c]{165pt} 
\hspace{4mm} 
We consider a tubular neighborhood $U$ of~$\HAGa$ in~$M$ and represent it as 
\HAhb{\HAthBB_+^2\times\HAGa} by means of the tubular diffeomorphism 
\HAhb{\utau\HAsco U\HAra\HAthBB_+^2\times\HAGa} 
(see Section~\ref{HAsec-M} for details). A~part of it is depicted in 
Fig.~4 in which the curve along the flat side represents~$\HAGa$ 
(\HAhb{=\{0\}\times\HAGa}), which does not belong to~$\utau(U)$, 
however. 
\end{minipage} 
\hfil 
\begin{minipage}[c]{98pt} 
\vspace*{-1\baselineskip}
\[
\begin{picture}(98,92)(0,-17)
\put(0,-1){\makebox(0,0)[bl]%
{\includegraphics[height=75pt]{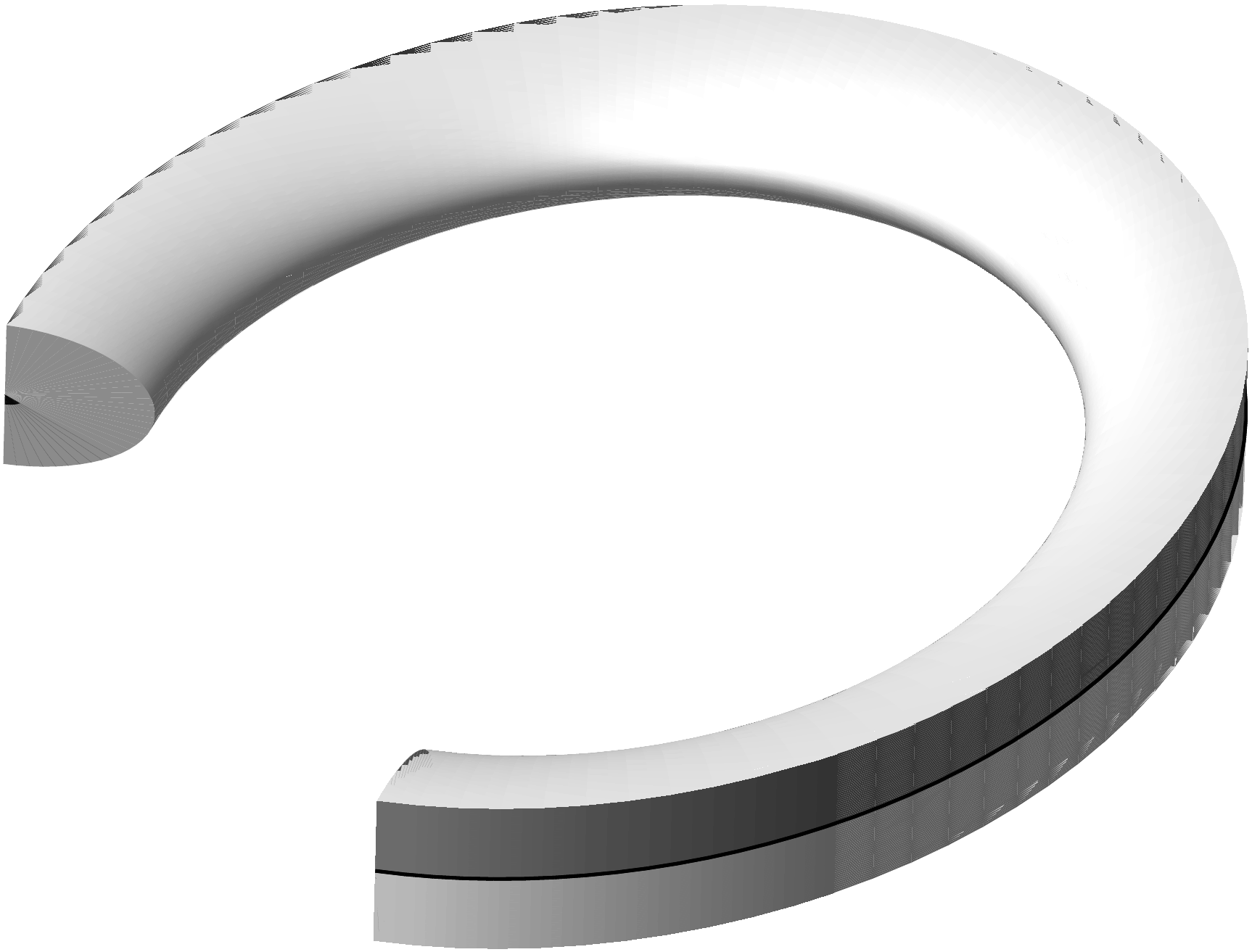}}}%
\put(49,-10){\makebox(0,0)[t]{\HAhh{\small Fig.~4}}}
\end{picture}
\] 
\end{minipage}
\vspace*{.2\baselineskip}
\hfil

\par 
Let 
$$ 
\upi\HAsco\HAthBB_+^2\HAra(0,1)\times\HABS_+^1 
\HAqb x\HAmt(|x|,\ x/|x|) 
$$ 
be the polar coordinate diffeomorphism. Then, given 
\HAhb{\HAal\geq1}, the composition 
\HAbeq \label{HAI.Ph} 
U\stackrel\utau\HAlora 
\HAthBB_+^2\times\HAGa 
\stackrel{\upi\times\HAid_\HAGa}\HAllllora 
(0,1)\times\HABS_+^1\times\HAGa 
\stackrel{\HAvp_\HAal^{-1}\times\HAid_\HAGa}\HAlllllora 
C_\HAal(\HABS_+^1)\times\HAGa 
\HAeeq 
defines a diffeomorphism~$\Phi_\HAal$ from~$U$ onto the semi-circular model 
$(\HAal,\HAGa)$-wedge 
\HAhb{W_\HAal=C_\HAal(\HABS_+^1)\times\HAGa}. We equip 
$C_\HAal(\HABS_+^1)$ with the equivalent metric 
\HAhb{\HAvp_\HAal^*(\D t^2+t^{2\HAal}g_{\HABS_+^1})} and give~$U$ the 
pull-back metric $\Phi_\HAal^*g_{W_\HAal}$. 

\par 
Let $\HAgsg$ be a Riemannian metric for~$M$ such that 
\HAhb{\HAgsg=\Phi_\HAal^*g_{W_\HAal}} on~$U$. Then $U$~is a 
semi-circular $(\HAal,\HAGa)$-end of~$\HAMgsg$. 
In Section~\ref{HAsec-M} it is shown that 
\HAhb{\Phi_\HAal^*g_{W_\HAal}\sim g/\HAda_\HAGa^{2\HAal}} on~$U$. 
Thus, if we fix any 
\HAhb{\rho\in C^\HAiy\bigl(M,(0,1]\bigr)} with 
\HAhb{\rho\sim\HAda_\HAGa^{2\HAal}} on~$U$ and 
\HAhb{\rho\sim\HAmf{1}} on 
\HAhb{M\HAssm U}, it follows from Theorem~\ref{HAthm-I.C} that 
\HAhb{(M,\ g/\rho^2)} is uniformly regular. 

\par 
\noindent 
\begin{minipage}[c]{170pt} 
\hspace{4mm}
These considerations and Corollary~\ref{HAcor-I.C} show that the 
Zaremba problem on~$\HAcM$ for (\ref{HAI.A.P}), in which Dirichlet boundary 
conditions are assigned on one half of the boundary of the 
ellipsoid~$\HAcM$ and Neumann conditions on the other half, is well-posed 
provided $\HAcA$~is regularly uniformly strongly 
\hbox{$\rho$-elliptic} where 
\HAhb{\rho\sim\HAda_\HAGa^{2\HAal}} near~$\HAGa$\linebreak  
\vspace*{-.8\baselineskip}
\end{minipage} 
\hfil 
\begin{minipage}[c]{116pt} 
\[ 
\begin{picture}(116,80)(0,-12)
\put(0,-1){\makebox(0,0)[bl]%
{\includegraphics[height=75pt]{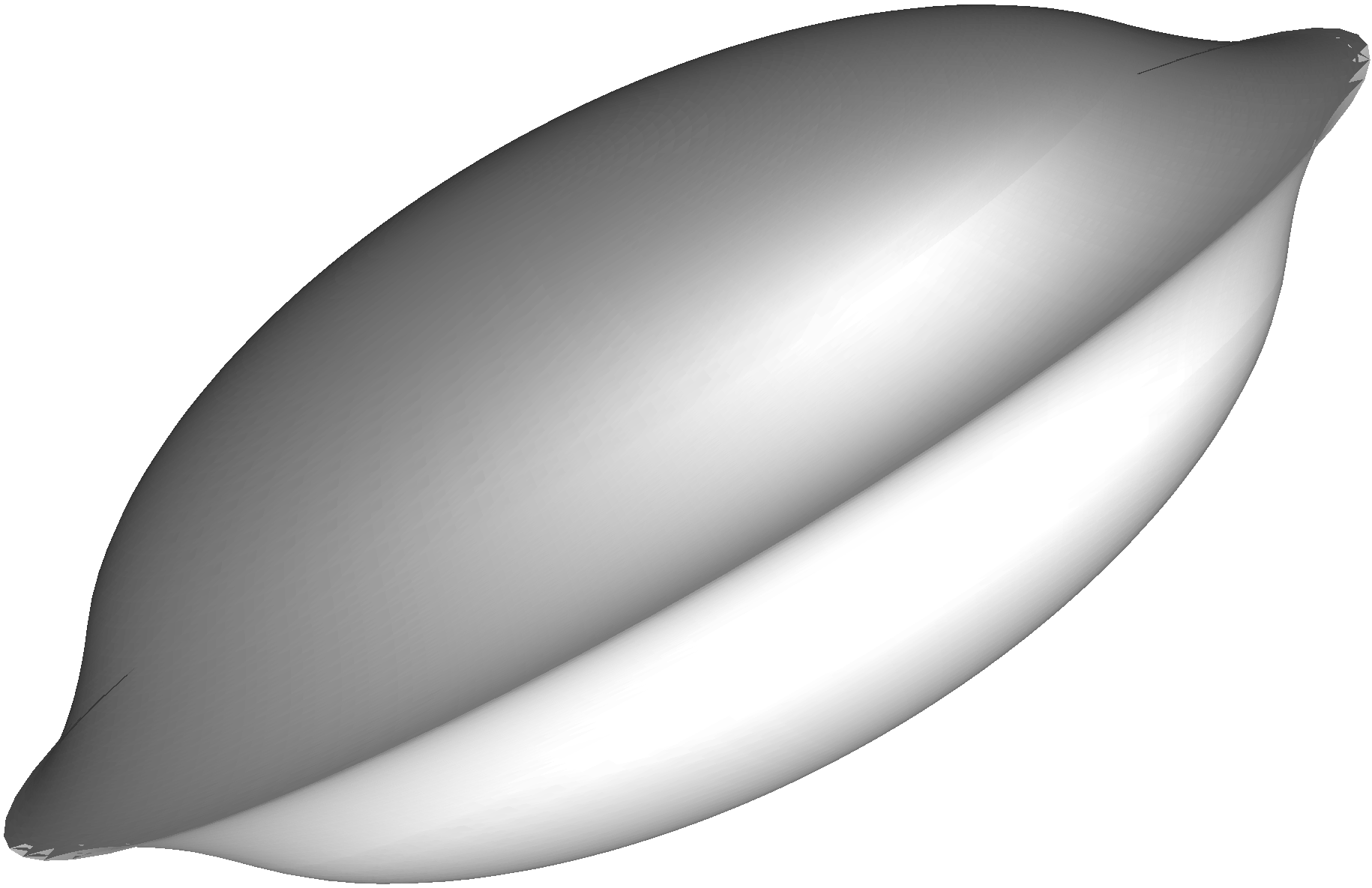}}} 
\put(105,-5){\makebox(0,0)[br]{\HAhh{\small Fig.~5}}}
\end{picture}
\] 
\end{minipage}
\hfil\\  
and 
\HAhb{\rho\sim\HAmf{1}} away from~$\HAGa$. They also show that 
$\HAMg$ can be visualized as a manifold with a 
cuspidal end of type~$(\HAal,\HAGa)$. This is illustrated by 
Fig.~5 for the case where 
\HAhb{\HAal=1}. 

\par 
The arguments used in this simple case extend to the general setting. 
This leads in Section~\ref{HAsec-M} to the proof of the following proposition 
which clarifies our choice of the name for (\ref{HAI.MM}). 
\begin{proposition}\label{HApro-I.C} 
Let $\HAMg$ have smooth cuspidal singularities of type 
$[\HAmf{V},\HAmfal,\HAmfGa,\HAmfba]$ and let 
\HAhb{\HAba=\HAba_\HAGa} be the cuspidal weight for 
\HAhb{\HAGa\in\HAmfGa}. Then there exists an open neighborhood~$\HAcU$ 
of~$\HAGa$ in~$\HAcM$ such that 
\HAhb{U:=\HAcU\HAssm\HAGa} is~a $(\HAba,\HAGa)$-cuspidal end of $\HAMg$. 
\end{proposition}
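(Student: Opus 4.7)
The plan is to carry out the construction~(\ref{HAI.Ph}) from the ellipsoid example around $\HAGa$ in arbitrary codimension $\ell:=m-\dim\HAGa\geq 1$. Concretely, I would build $\Phi_\HAba$ as the composition of a tubular diffeomorphism, polar coordinates, and the inverse stretching $\HAvp_\HAba^{-1}\times\HAid_\HAGa$, and then verify that it identifies $(U,g)$ with the model wedge $(W_\HAba,g_{W_\HAba})$. Since $\HAGa$ is a compact connected submanifold of $\HAcM$ with either $\HAGa\cap\HApl\HAcM=\HAes$ or $\HAGa\HAis\HApl\HAcM$, the normal exponential map of $\HAGa$ in $\HAcMg$ provides, after rescaling and, if required, a local trivialization of the normal bundle $N\HAGa$, a tubular diffeomorphism $\utau\HAsco\HAcU\HAra\HABB^\ell\times\HAGa$ in the interior case, respectively $\utau\HAsco\HAcU\HAra\HABB_+^\ell\times\HAGa$ in the boundary case, sending $\HAGa$ to $\{0\}\times\HAGa$.

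With $U:=\HAcU\HAssm\HAGa$ and $\HABS\in\{\HABS^{\ell-1},\HABS_+^{\ell-1}\}$ chosen according to the case, composing the restriction of $\utau$ to $U$ with the polar diffeomorphism $\upi$ on $\HAthBB$ and with $\HAvp_\HAba^{-1}\times\HAid_\HAGa$ yields, exactly as in~(\ref{HAI.Ph}), a diffeomorphism
$$
\Phi_\HAba\HAsco U \HAlllllora C_\HAba(\HABS)\times\HAGa = W_\HAba\;.
$$
To complete the proof I would verify that $\Phi_\HAba$ identifies $g$ with the wedge metric $g_{W_\HAba}=g_{C_\HAba}+g_\HAGa$ in the sense used by the paper. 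In Fermi-type coordinates $(r,y,p)$ attached to $\HAGa$---with $r=\HAda_\HAGa$, $y\in\HABS$, and $p\in\HAGa$---Gauss's lemma combined with the smoothness of $g$ across $\HAGa$ in $\HAcMg$ and the compactness of $\HAGa$ yields $g\sim \D r^2+r^2 g_\HABS+g_\HAGa$ near $\HAGa$, with all corrections uniformly bounded. Substituting $r=t^\HAba$ through $\HAvp_\HAba^{-1}$ turns $\D r^2$ into $\HAba^2 t^{2(\HAba-1)}\D t^2$ and $r^2 g_\HABS$ into $t^{2\HAba} g_\HABS$, so that after factoring out the conformal weight $\HAda_\HAGa^{2\HAba}$ the pullback matches the equivalent wedge metric $\HAvp_\HAba^*(\D t^2+t^{2\HAba} g_\HABS)+g_\HAGa$ on $W_\HAba$.

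The principal obstacle will be the metric calculation just sketched: transcribing the Fermi expansion of $g$ through $\HAvp_\HAba^{-1}$ while controlling the off-diagonal cross-terms between radial, spherical, and base-point directions, and ensuring that the tubular neighborhood can be chosen as a genuine product $\HABB^\ell\times\HAGa$ even when $N\HAGa$ is topologically nontrivial. The first point is settled by the compactness of $\HAGa$, which bounds the second fundamental form and absorbs the lower-order Fermi terms into the equivalence ${}\sim{}$. The second is addressed by shrinking $\HAcU$ to a neighborhood on which $N\HAGa$ trivializes and by the fact that the paper's notion of ``cuspidal end'' is used compatibly with the equivalence of metrics---as already illustrated in the ellipsoid case by the relation $\Phi_\HAal^* g_{W_\HAal}\sim g/\HAda_\HAGa^{2\HAal}$ cited in the text. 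The remaining bookkeeping, routine but laborious, is deferred to Section~\ref{HAsec-M}.
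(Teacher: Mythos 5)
Your construction of $\Phi_\HAba$ --- tubular diffeomorphism, polar coordinates, inverse stretching --- is exactly the paper's composition~(\ref{HAM.UW}), and your Fermi--coordinate normalization \HAhb{g\sim\D r^2+r^2g_\HABS+g_\HAGa} is the paper's~(\ref{HAM.tg0}). The gap is in the metric identification, which is the entire substance of the proposition, and there the argument does not close. First, $\HAvp_\HAba^{-1}$ does not effect the substitution \HAhb{r=t^\HAba}: it sends $(r,y)$ to $(r,r^\HAba y)$, so in (\ref{HAI.Ph}) the radial parameter of the cusp equals the radius $r$ produced by~$\upi$ and no reparametrization of the radial variable occurs. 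Second, and decisively, no conformal rescaling of the conical metric by a power of \HAhb{\HAda_\HAGa\sim r} can yield the cuspidal wedge metric: a conformal factor multiplies the three blocks $\D r^2$, \ $r^2g_\HABS$, \ $g_\HAGa$ by the same function, whereas passing from \HAhb{\D r^2+r^2g_\HABS+g_\HAGa} to an equivalent of \HAhb{\D t^2+t^{2\HAba}g_\HABS+g_\HAGa} rescales only the spherical block. Concretely, \HAhb{(\D r^2+r^2g_\HABS+g_\HAGa)/\HAda_\HAGa^{2\HAba}\sim r^{-2\HAba}\D r^2+r^{2-2\HAba}g_\HABS+r^{-2\HAba}g_\HAGa}, whose $g_\HAGa$-block carries the unbounded factor $r^{-2\HAba}$ and therefore cannot be equivalent to anything of the form \HAhb{(\cdots)+g_\HAGa} under any change of the radial variable alone. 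So ``after factoring out the conformal weight $\HAda_\HAGa^{2\HAba}$ the pullback matches the wedge metric'' is false, and the ``routine bookkeeping'' you defer to Section~\ref{HAsec-M} is precisely the step that fails --- deferring it there is moreover circular, since that section is where the proof is supposed to be supplied.

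Two further points. Shrinking $\HAcU$ cannot trivialize a nontrivial normal bundle of the compact~$\HAGa$: a tubular neighborhood is the disc bundle of $N\HAGa$ and is a product \HAhb{\HABB^\ell\times\HAGa} only when $N\HAGa$ is trivial; the paper takes this product form as given (citing Hirsch and Kosinski) rather than obtaining it by shrinking, so your proposed remedy is not one. And the paper's actual route is different in kind from yours: it never asserts an exact or conformal relation between $g$ and $g_{W_\HAba}$, but runs the metric comparison through the tubular/collar construction of Section~\ref{HAsec-M} --- including the separate treatment of the boundary case \HAhb{\HAGa\HAis\HApl\HAcM} via (\ref{HAM.dt})--(\ref{HAM.td2}), which you do not address --- the distance comparisons (\ref{HAM.td}) and (\ref{HAM.td2}), and Lemma~\ref{HAlem-M.R} together with Proposition~\ref{HApro-E.P}, where every metric statement is made modulo the equivalence ${}\sim{}$ and the singularity-datum formalism of (\ref{HAN.sd}).
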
 
The preceding treatment indicates that there are two possible ways of 
looking at these problems. In the first one we put forward the differential 
equation setting. Then the singular manifold has an inferior position and 
it is only the singularity function~$\rho$ which comes into play. 
In the second approach the geometric appearance of the singular manifold 
is relevant. In this case we start off with a singular manifold $\HAMg$ 
which may not be obtained from a uniformly regular ambient manifold by 
cutting out lower-dimensional submanifolds. Instead, $\HAMg$~can have more 
general singular ends~$U$; namely such that $U$~is isometric to a 
model $(\HAal,\HAGa)$-wedge over~$(B,g_B)$, where $(B,g_B)$~is as in 
(\ref{HAI.F}), and $B$~replaces~$\HABS$ in definition (\ref{HAI.Ca}). 
Theorem~\ref{HAthm-M.S} and Proposition~\ref{HApro-M.V}(i) 
guarantee then the existence of singularity 
functions~$\rho$, modeling again the geometric structure of~$\HAMg$, such 
that $\HAMg$~is singular of type~$\rho$. Consequently, we can obtain 
well-posedness theorems for degenerate parabolic equations on singular 
manifolds by applying Theorem~\ref{HAthm-I.D}. 

\par 
Up to now we have considered the case in which we introduce a conformal 
metric 
\HAhb{g/\rho^2} on~$M$ in order to render it uniformly regular. This 
means that we restrict ourselves to differential operators with isotropic 
degenerations. However, other choices are possible also. For example, 
in the setting (\ref{HAI.Ph}) we can endow 
\HAhb{(0,1)\times\HABS_+^1\times\HAGa} with the metric 
\HAhb{t^{-2\HAal}\D t^2+g_{\HABS_+^1}+g_\HAGa} instead of 
\HAhb{\D t^2+t^{2\HAal}g_{\HABS_+^1}+g_\HAGa} as above. This is a consequence 
of the next theorem which is also proved in Section~\ref{HAsec-M}. 
For simplicity, we consider the case where $M$~has only one singular end. 
The extension to the general case is straightforward. Moreover, 
\HAbeq \label{HAI.c} 
(0,1)\times\HABS\times\HAGa 
\HAeeq 
is the canonical representation of a tubular neighborhood~$U$ of~$\HAGa$ 
in~$\HAMg$ in the sense made precise later in this paper. 
\begin{theorem}\label{HAthm-I.A} 
Let (\ref{HAI.MM}) be satisfied with 
\HAhb{\HAmf{V}=\HAes} and 
\HAhb{\HAmfGa=\{\HAGa\}}, and fix 
\HAhb{\HAal>0}. Let $U$ be a tubular neighborhood of~$\HAGa$ in~$\HAMg$. 
Suppose $\HAgsg$~is a metric for~$M$ which coincides on 
\HAhb{M\HAssm U} with~$g$ and equals near~$\HAGa$ 
$$ 
t^{-2}\D t^2+t^{-2\HAal}(g_\HABS+g_\HAGa) 
\quad\text{if\/ }0<\HAal\leq1\;, 
$$ 
respectively 
\HAbeq \label{HAI.al}  
t^{-2(\HAal+1)}\D t^2+g_\HABS+g_\HAGa 
\quad\text{if\/ }\HAal>1\;, 
\HAnpb 
\HAeeq 
in the canonical representation~(\ref{HAI.c}) of~$U$. Then $\HAMgsg$~is 
uniformly regular. 
\end{theorem}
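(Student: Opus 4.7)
The plan is to verify uniform regularity directly from the definitions, using the fact that $\HAgsg$ coincides with~$g$ on $M\HAssm U$ (where $\HAcMg$ is already uniformly regular by (\ref{HAI.MM})(iii), since $\HAmf{V}=\HAes$ forces $V_0=\HAcM$) and requires new work only on the end~$U$. In the canonical representation $U\cong(0,1)\times\HABS\times\HAGa$ the two regimes for~$\HAal$ admit quite different treatments, so we handle them separately and then patch the resulting local atlas to the one inherited from $\HAcMg$ across finitely many transition charts on which $\HAgsg=g$.

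For~$\HAal>1$: set $u:=t^{-\HAal}/\HAal$; then $du^2=t^{-2(\HAal+1)}\,dt^2$, so this substitution is a diffeomorphism of $(0,1)$ onto $(1/\HAal,\HAiy)$ that pushes $\HAgsg$ forward to the product metric $du^2+g_{\HABS}+g_{\HAGa}$. Hence $(U,\HAgsg)$ is isometric to an infinite cylinder---a $0$-funnel over the compact base~$\HABS\times\HAGa$---which is a tame end of $\HAMgsg$. Combined with the uniformly regular atlas of $\HAcMg$ restricted to $M\HAssm U$, Theorem~\ref{HAthm-I.T} then delivers uniform regularity of $\HAMgsg$.

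For~$0<\HAal\leq1$: put $s:=-\log t$ to obtain the warped product
$$
\HAgsg=ds^2+e^{2\HAal s}(g_{\HABS}+g_{\HAGa})
$$
on $(0,\HAiy)\times B$ with $B:=\HABS\times\HAGa$ compact. This end is no longer tame (the warping is exponential, not polynomial), so Theorem~\ref{HAthm-I.T} does not apply and a uniformly regular atlas must be produced by hand. Start from a finite normalized atlas of~$B$; for each $k\in\HABN$ refine each base chart into pieces~$V_{k,j}$ of $(g_{\HABS}+g_{\HAGa})$-diameter $\sim e^{-\HAal k}$, and take as chart domains the products $(k-1,k+1)\times V_{k,j}$ precomposed with the base-direction dilation $y\HAmt e^{\HAal k}y$. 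On each such chart the ratio $e^{2\HAal s}/e^{2\HAal k}$ lies in $[e^{-2\HAal},e^{2\HAal}]$, and its $s$-derivatives are bounded uniformly in~$k$; the pulled-back metric components are therefore uniformly equivalent to those of the Euclidean metric, with derivatives of every order bounded uniformly in the chart. Finite multiplicity and shrinkability follow from the product nature of the construction, and the patching with the restricted atlas of $\HAcMg$ produces a uniformly regular atlas for $\HAMgsg$.

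The principal obstacle is the case $0<\HAal\leq1$: the exponentially warped model is not of tame type, so Theorem~\ref{HAthm-I.T} is unavailable and the uniformly regular atlas has to be built explicitly, with careful tracking of the rescaling factor $e^{\HAal k}$ in the base directions and of the balanced bounds on the warping factor. When $\HABS=\HABS_+^{\ell-1}$ the normalized base charts must be adapted to~$\HApl B$ in the usual sense, but this is routine and adds no essential new difficulty.
\qed
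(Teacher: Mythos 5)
Your proof is correct in substance and arrives at the same two reductions as the paper, but by a more hands-on route. The paper treats both cases uniformly with its general machinery: it rewrites the prescribed metric as a metric $g_W$ on the model wedge $W(R,\HABS,\HAGa)$ with $R=\HAsR_\HAal$ for $0<\HAal\leq1$, resp.\ $R=\HAsR_{-\HAal}$ for $\HAal>1$, invokes Example~\ref{HAexa-F.ex}(b) (that is, the reparametrization Proposition~\ref{HApro-F.I} with $\psi=\HAsR_{\pm\HAal}$) together with the product Theorem~\ref{HAthm-P.MM} to conclude that $g_W$ is cofinally uniformly regular, and transfers this to $(U,\HAgsg)$ by Lemma~\ref{HAlem-P.f}; the gluing with $M\HAssm U$ is then Theorem~\ref{HAthm-M.S}. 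Your substitutions $u=t^{-\HAal}/\HAal$ and $s=-\log t$ are exactly these reparametrizations made explicit, and your hand-built atlas for the exponentially warped cylinder (base pieces of diameter $\sim e^{-\HAal k}$ at level~$k$, rescaled by $e^{\HAal k}$) is precisely the atlas that Theorem~\ref{HAthm-P.MM} produces when the interval factor carries the bounded singularity function $e^{-\HAal s}$; your bounds on the warping factor, its derivatives, and the transition maps are the right ones. What the paper's route buys is that this bookkeeping is done once and for all in its earlier sections; what yours buys is transparency about where the scale $e^{-\HAal k}$ comes from.

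Two points need repair, neither fatal. First, in the case $\HAal>1$ you conclude via Theorem~\ref{HAthm-I.T}, but that theorem also requires $V_0$ itself (not just the overlaps $V_0\cap V_i$) to be relatively compact, and here the complement $M\HAssm U$ need not be relatively compact, since $\HAcM$ is only assumed uniformly regular, not compact (think of $\HAcM=\HABR^m$ with $\HAGa$ a compact hypersurface). The correct gluing tool is Theorem~\ref{HAthm-M.S} (equivalently Lemma~\ref{HAlem-P.P}), with $\rho_0=\HAmf{1}$ on a slight shrinking of $M\HAssm U$ and the cylindrical end being cofinally uniformly regular; its hypotheses are met because the overlap is relatively compact, $\HAGa$ being compact. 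Second, the informal ``patching'' at the end of your $0<\HAal\leq1$ argument should be routed through the same lemma: $\HAgsg$ is only required to equal the model metric \emph{near} $\HAGa$ and to equal $g$ \emph{off} $U$, so there is a transition region on which $\HAgsg$ is merely some smooth metric; that region is relatively compact, Lemma~\ref{HAlem-U.C} makes it uniformly regular, and Lemma~\ref{HAlem-P.P} assembles the three pieces.
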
 
Recall that~$g_\HABS$, resp.~$g_\HAGa$, is absent if 
\HAhb{\ell=m}, resp.\ 
\HAhb{\ell=1}. 

\par 
By applying Theorem~\ref{HAthm-I.A.P} to the setting of 
Theorem~\ref{HAthm-I.A} we obtain well-posedness results for parabolic 
problems with anisotropic degeneration. To indicate the inherent 
potential of such applications we consider the particularly 
interesting setting in which 
$\HAGa$~is a compact connected component of the boundary of~$\HAcM$. We 
also suppose, for simplicity, that $\HAcA$~is the negative Laplace-Beltrami 
operator~%
$-\HADa$ of~$\HAMg$ and assume 
\HAhb{\HAal>1}. Then it follows from (\ref{HAI.al}) that the (interior) 
flux vector field satisfies in a collar neighborhood of~$\HAGa$ 
$$ 
\HAgrad\sim(\HAda^{2\HAal}\HApl_\unu,\HAgrad_\HAGa)\;. 
$$ 
Hence it degenerates in the normal direction only and there 
is no degeneration at all in tangential directions. This is 
in contrast to the isotropic case in which Corollary~\ref{HAcor-I.C} applies 
and, in the present setting, gives 
$$ 
\HAgrad\sim\HAda^{2\HAal}(\HApl_\unu,\HAgrad_\HAGa) 
\HAnpb 
$$ 
near~$\HAGa$. 

\par 
There has been done an enormous amount of research on 
\emph{elliptic} equations on singular manifolds. All of it is related, 
in one way or another, to the seminal paper by 
V.A.~Kondrat{\cprime}ev~\cite{Kon67a}. It is virtually impossible to 
review this work here and to do justice to the many authors who 
contributed. It may suffice to mention the three most active groups and 
some of their principal exponents. First, there is the Russian school which 
builds directly on Kondrat{\cprime}ev's work and is also strongly 
application-oriented (see the numerous papers and books by  
V.G.~Maz{\cprime}ya, S.A.~Nazarov, and their coauthors, for example). 
Second, the group gathering around \hbox{B.-W.}~Schulze has constructed 
an elaborate calculus of pseudo-differential algebras on manifolds with 
singularities, mainly of conical and cuspidal type. For a lucid presentation 
of some of its aspects in the simplest setting of manifolds with cuspidal 
points and wedges we refer to the book of V.E.~Nazaikinskii, A.Yu.~Savin, 
\hbox{B.-W.}-Schulze, and B.Yu.~Sternin~\cite{NSSS06a}. Third, another 
general approach to pseudo-differential 
operators on manifolds with singularities has been developed by 
R.~Melrose and his coworkers. A~brief explanation, stressing the differences 
of the techniques used by the latter two groups, is found in the 
section~`Bibliographical Remarks' of~\cite{NSSS06a}. Henceforth, we call 
these methods `classical' for easy reference. 

\par 
To explain to which extent our point of view differs from the classical 
approach we consider the simplest case, namely, a~manifold with one conical 
singularity. By means of the stretching diffeomorphism the model 
cone~$C_1(\HABS)$ 
is represented by the `stretched manifold' 
\HAhb{(0,1)\times\HABS} whose metric is 
$$ 
g=\D t^2+t^2g_\HABS=t^2\bigl((\D t/t)^2+g_\HABS\bigr)\;. 
$$ 
Thus the corresponding Laplace-Beltrami operator is given by 
\HAhb{t^{-2}\bigl((t\HApl_t)^2+\HADa_\HABS\bigr)}. More generally, in the 
classical theories there are considered differential operators which, on 
the stretched manifold, are (in the second order case) of the 
form~$t^{-2}L$, 
where $L$~is a uniformly elliptic operator generated by the vector fields 
\HAhb{t\HApl_t,\HApl_{\HAta^1},\ldots,\HApl_{\HAta^{m-1}}} with 
\HAhb{(\HAta^1,\ldots,\HAta^{m-1})} being local coordinates for~$\HABS$. 

\par 
Instead, our approach is based on the metric 
$$ 
\hat g=g/t^2=(\D t/t)^2+g_\HABS 
$$ 
whose Laplacian is 
\HAhb{(t\HApl_t)^2+\HADa_\HABS}. Hence our theory addresses operators of 
type~$L$. As has been shown in~\cite{Ama14a}, and explained above, 
this amounts to the study of degenerate differential operators in 
the original setting. (Let us mention, in passing, that the variable 
transformation 
\HAhb{t=e^{-s}} carries 
\HAhb{\bigl((0,1)\times\HABS,\ \D t^2/t^2+g_\HABS\bigr)} onto 
\HAhb{\bigl((1,\HAiy)\times\HABS,\ \D s^2+g_\HABS\bigr)} whose Laplacian is 
\HAhb{\HApl_s^2+\HADa_\HABS}. The latter Riemannian manifold 
is easily seen to 
be uniformly regular `near infinity', that is, cofinally uniformly regular 
as defined in Section~\ref{HAsec-F}. These trivial observations form part 
of the basis of this paper.) 

\par 
The factor~$t^{-2}$ multiplying~$L$ in the classical approach does 
not play a decisive role for the proof of many results in the 
elliptic theory since it can be `moved to the right-hand side'. However, 
the situation changes drastically if a spectral parameter is included since 
\HAhb{t^{-2}L+\HAlda=t^{-2}(L+\HAlda t^2)} is no longer 
of the same type as~$L$. This is the reason why---at least up to 
now---there is no general theory of `classical' parabolic equations on 
singular manifolds. 

\par 
All singular manifolds discussed so far belong to the class of manifolds 
with `smooth singularities'. By this we mean that the bases of 
the cusps themselves do not have singularities. If they are also 
singular, we model manifolds with cuspidal corners and more complicated 
higher order singularities. For the sake of simplicity we do not 
consider such cases in this paper. However, all definitions and 
theorems presented below have been `localized' so that an extension to 
`corner manifolds' can be built directly on the present work. 

\par 
In the next section, besides fixing our basic notation, we give precise 
(localized) definitions of Riemannian manifolds which are 
uniformly regular, respectively singular of type~$\rho$. All subsequent 
considerations are given for the latter class. Corresponding assertions 
for uniformly regular manifolds are obtained by setting 
\HAhb{\rho=\HAmf{1}}. 

\par 
Section~\ref{HAsec-P} contains preliminary technical results and, 
in particular, the proof of (an extended version of) 
Example~\ref{HAexa-I.ex}(e). As a first application of these investigations 
we present, in Section~\ref{HAsec-U}, some easy examples of uniformly 
regular Riemannian manifolds. 

\par 
In Section~\ref{HAsec-C} we introduce a general class of 
`cusp characteristics' 
which provides us with ample families of singularity functions~$\rho$. 
It is a consequence of Example~\ref{HAexa-C.ex}(b) that our results 
do not only apply to manifolds with cuspidal singularities, but also to 
manifolds with `exponential' cusps and wedges, or in more general 
situations (see Example~\ref{HAexa-C.ex}(b) and Lemma~\ref{HAlem-M.R}). 

\par 
In the proximate section we introduce model wedges and explore their 
singularity behavior under various Riemannian metrics. The case of the 
`natural' metric, induced by the embedding in the ambient Euclidean space, 
is treated in Section~\ref{HAsec-E}. The last section contains the main 
results and the proofs left out in the introduction. 
\section{Notations and Definitions}\label{HAsec-N}%
By~a \emph{manifold} we always mean a smooth, that is, 
\hbox{$C^\HAiy$ manifold} with (possibly empty) boundary such that its 
underlying topological space is separable and metrizable. Thus we work in 
the smooth category. A~manifold does not need to be connected, but all 
connected components are of the same dimension. 

\par 
Let $M$ be a submanifold of some manifold~$N$. Then 
\HAhb{\HAia\HAsco M\HAhr N}, or simply 
\HAhb{M\HAhr N}, denotes the natural embedding 
\HAhb{p\HAmt p}, for which we also write~$\HAia_M$. (The meaning of~$N$ 
will always be clear from the context.) This embedding induces the 
natural (fiber-wise linear) embedding 
\HAhb{\HAia\HAsco TM\HAhr TN} of the tangent bundle of~$M$ into 
the one of~$N$. 

\par 
Let $\HANh$ be a Riemannian manifold. Then $\HAia^*h$~denotes the restriction 
of~$h$ to 
\HAhb{M\HAhr N}, that is, 
\HAhb{(\HAia^*h)(p)\HAXY=h(p)\HAXY} for 
\HAhb{p\in M} and 
\HAhb{X,Y\in T_pM\HAhr T_pN}. If $g$~is a Riemannian metric for~$M$, then 
$\HAMg$~is a \emph{Riemannian submanifold} of~$\HANh$, in symbols: 
\HAhb{\HAMg\HAhr\HANh}, if 
\HAhb{g=\HAia^*h}. If $M$~has codimension~$0$, then we write again $h$ 
for~$\HAia^*h$. 

\par 
The Euclidean metric 
$$ 
|\D x|^2=(\D x^1)^2+\cdots+(\D x^m)^2 
$$ 
of~$\HABR^m$ is also denoted 
by~$g_m$. Unless explicitly stated otherwise, we identify~$\HABR^m$ 
with~$\HARmgm$. 

\par 
Given a finite-dimensional normed vector space 
\HAhb{E=(E,\HAvsdot)} and an open subset~$V$ of $\HABR^m$ or~$\HABH^m$, 
we write 
\HAhb{\HAVsdot_{k,\HAiy}} for the usual norm of $BC^k\HAVE$ , 
the Banach space of all 
\HAhb{v\in C^k\HAVE} such that $|\HApa v|$~is uniformly bounded for 
\HAhb{\HAal\in\HABN^m} with 
\HAhb{|\HAal|\leq k}. (We use standard multi-index notation.) As usual, 
\HAhb{C^k(V)=C^k\HAVR} etc., and 
\HAhb{\HAVsdot_\HAiy=\HAVsdot_{\HAiy,0}}. 

\par 
Suppose $M$ and~$N$ are manifolds and 
\HAhb{\HAvp\HAsco M\HAra N} is a diffeomorphism. By~$\HAvp^*$ we denote the 
pull-back by~$\HAvp$ (of general tensor fields) and 
\HAhb{\HAvp_*:=(\HAvp^{-1})^*} is the corresponding push-forward. Thus 
\HAhb{\HAvp^*v=v\circ\HAvp} for a function~$v$ on~$N$. Recall that the 
pull-back~$\HAvp^*h$ of a Riemannian metric~$h$ on~$N$ is given by 
\HAbeq \label{HAN.phh} 
(\HAvp^*h)\HAXY=\HAvp^*\bigl(h(\HAvp_*X,\HAvp_*Y)\bigr) 
\HAnpb  
\HAeeq 
for all vector fields $X$ and~$Y$ on~$M$. 

\par 
As usual, 
\HAhb{(\HApl/\HApl x^1,\ldots,\HApl/\HApl x^m)} is the coordinate frame 
for~$T_{U_{\HAcoU\HAka}}M$ associated with the local coordinates 
\HAhb{\HAka=(x^1,\ldots,x^m)} on 
\HAhb{U_{\HAcoU\HAka}:=\HAdom(\HAka)}. Here $T_{U_{\HAcoU\HAka}}M$~denotes 
the restriction of~$TM$ to 
\HAhb{U_{\HAcoU\HAka}\HAhr M}. Thus 
\HAhb{\HAka_*(\HApl/\HApl x^i)=e_i}, where 
\HAhb{(e_1,\ldots,e_m)} is the standard basis for~$\HABR^m$. The basis 
for~$T_{U_{\HAcoU\HAka}}^*M$, dual to 
\HAhb{(\HApl/\HApl x^1,\ldots,\HApl/\HApl x^m)}, is 
\HAhb{(\D x^1,\ldots,\D x^m)} with $\D x^i$~being the differential of the 
coordinate function~$x^i$. 

\par 
Let $g$ be a Riemannian metric on~$M$. For a local chart 
\HAhb{\HAka=(x^1,\ldots,x^m)} the local representation for~$g$ with respect 
to these coordinates is given by 
$$ 
g=g_{ij}\D x^i\,\D x^j 
\HAqa g_{ij}:=g\left(\frac\HApl{\HApl x^i},\frac\HApl{\HApl x^j}\right)\;. 
$$ 
Here and below, we employ the standard summation convention. Then, given 
vector fields 
\HAhb{\xi=\xi^ie_i} and 
\HAhb{\eta=\eta^je_j} on~$\HAka(U_{\HAcoU\HAka})$, it follows from 
(\ref{HAN.phh}) that 
$$ 
\HAbal 
\HAka_*g(\xi,\eta)  
&=\HAka_*\bigl(g(\HAka^*\xi,\HAka^*\eta)\bigr) 
 =\HAka_*\bigl(g(\xi^i\HApl/\HApl x^i,\eta^j\HApl/\HApl x^j)\bigr)\\ 
&=\HAka_*(g_{ij}\xi^i\eta^j) 
 =\HAka_*g_{ij}\xi^i\eta^j 
 =(g_{ij}\circ\HAka^{-1})\xi^i\eta^j\;. 
 \HAeal 
$$ 
Thus $\HAka_*g(x)$~is for each 
\HAhb{x\in\HAka(U_{\HAcoU\HAka})} a~positive definite symmetric bilinear 
form. Hence there exists 
\HAhb{c(x)\geq1} such that 
\HAbeq \label{HAN.kg} 
|\xi|^2/c(x)\leq\HAka_*g(x)(\xi,\xi)\leq c(x)\,|\xi|^2 
\HAqa \xi\in\HABR^m 
\HAqb x\in\HAka(U_{\HAcoU\HAka})\;,
\HAeeq 
where 
\HAhb{|\xi|:=\sqrt{g_m(\xi,\xi)}=\sqrt{(\xi\HAsn\xi)}} is the Euclidean norm 
of 
\HAhb{\xi\in\HABR^m}. In other words, 
$$ 
g_m/c(x)\leq\HAka_*g(x)\leq c(x)g_m 
\HAqa x\in\HAka(U_{\HAcoU\HAka})\;. 
$$ 

\par 
We set 
\HAhb{Q:=(-1,1)\HAis\HABR}. If $\HAka$~is a local chart for an 
\hbox{$m$-dimensional} manifold~$M$, then it is \emph{normalized} (at~$p$) 
if 
\HAhb{\HAka(U_{\HAcoU\HAka})=Q^m} whenever 
\HAhb{U_{\HAcoU\HAka}\HAis\HAci M}, the interior of~$M$, whereas 
\HAhb{\HAka(U_{\HAcoU\HAka})=Q^m\cap\HABH^m} if $U_{\HAcoU\HAka}$~has a 
nonempty intersection with the boundary~$\HApl M$ of~$M$ (and 
\HAhb{\HAka(p)=0}). We put 
\HAhb{Q_\HAka^m:=\HAka(U_{\HAcoU\HAka})} if $\HAka$~is normalized. 
(We find it convenient 
to use normalization by cubes. Of course, we could equally well normalize 
by employing Euclidean balls.) 

\par 
Let $M$ be an \hbox{$m$-dimensional} manifold and $S$ a (nonempty) subset 
thereof. Given an atlas~$\HAgK$ for~$M$, we set 
$$ 
\HAgK_S:=\{\,\HAka\in\HAgK\ ;\ U_{\HAcoU\HAka}\cap S\neq\HAes\,\}\;. 
$$ 
Then 
$\HAgK_S$~has \emph{finite multiplicity} or: $\HAgK$~has \emph{finite 
multiplicity on}~$S$, if there exists 
\HAhb{k\in\HABN} such that any intersection of more than~$k$ coordinate 
patches~$U_{\HAcoU\HAka}$ with 
\HAhb{\HAka\in\HAgK_S} is empty. The least such~$k$ is then the multiplicity, 
$\HAmult(\HAgK_S)$, of~$\HAgK_S$. The atlas~$\HAgK$  is 
\emph{shrinkable on}~$S$, or: $\HAgK_S$ is \emph{shrinkable}, if 
$\HAgK_S$~consists of normalized charts and there exists 
\HAhb{r\in(0,1)} such that 
\HAbeq \label{HAN.S} 
\{\,\HAka^{-1}(rQ_\HAka^m)  
\ ;\ \HAka\in\HAgK_S\,\} 
\HAnpb 
\HAeeq 
is a cover of~$S$. It is \emph{shrinkable on~$S$ to} 
\HAhb{r_0\in(0,1)} if (\ref{HAN.S}) holds for each 
\HAhb{r\in(r_0,1)}. 

\par 
An atlas~$\HAgK$ for~$M$ is \emph{uniformly regular on}~$S$ if  
\HAbeq \label{HAN.KS} 
 \HAbal 
 \\ 
 \noalign{\vskip-6\jot}   
 \rm{(i)} \qquad    &\HAgK_S\text{ is shrinkable and has finite 
                     multiplicity}\;;\\ 
 \noalign{\vskip-1\jot} 
 \rm{(ii)}\qquad    &\|\tilde\HAka\circ\HAka^{-1}\|_{k,\HAiy}\leq c(k)\;, 
                     \ \ \HAka,\tilde\HAka\in\HAgK_S\;, 
                     \ \ k\in\HABN\;.
 \HAeal
\HAeeq 
In (ii) and in similar situations it is understood that only 
\HAhb{\HAka,\tilde\HAka\in\HAgK_S} with 
\HAhb{U_{\HAcoU\HAka}\cap U_{\HAcoU\tilde\HAka}\neq\HAes} are being 
considered. Two atlases $\HAgK$ and~$\tilde\HAgK$ for~$M$, which are 
uniformly regular on~$S$, are \emph{equivalent on}~$S$, in symbols: 
\HAhb{\smash{\HAgK\HAapproxS\tilde\HAgK}\HAsmtB}, if 
\HAbeq \label{HAN.Keq} 
 \HAbal 
 \\ 
 \noalign{\vskip-6\jot}  
 \rm{(i)} \qquad    &\HAcard\{\,\tilde\HAka\in\tilde\HAgK_S\;, 
                     \ \ U_{\HAcoU\tilde\HAka}
                     \cap U_{\HAcoU\HAka}\neq\HAes\,\}
                     \leq c\;, 
                     \ \ \HAka\in\HAgK_S\;;\\                      
 \noalign{\vskip-1\jot} 
 \rm{(ii)}\qquad    &\|\tilde\HAka\circ\HAka^{-1}\|_{k,\HAiy} 
                     +\|\HAka\circ\tilde\HAka^{-1}\|_{k,\HAiy}\leq c(k)\;, 
                     \ \ \HAka\in\HAgK_S\;, 
                     \ \ \tilde\HAka\in\tilde\HAgK_S\;, 
                     \ \ k\in\HABN\;.
 \HAeal
\HAeeq 
This defines an equivalence 
relation on the class of all atlases for~$M$ which are uniformly regular 
on~$S$. Each equivalence class is~a \emph{structure of uniform 
regularity on}~$S$. We write~%
$\HAeea{\HAgK}_S$ for it to indicate that it is \emph{generated} 
by~$\HAgK$, that is, contains~$\HAgK$ as a representative. If $M$~is endowed 
with a structure~%
$\HAeea{\HAgK}_S$ of uniform regularity on~$S$, then 
$\bigl(M,\HAeea{\HAgK}_S\bigr)$ is~a \emph{uniformly regular manifold 
on}~$S$. 

\par 
Let 
$\bigl(M,\HAeea{\HAgK}_S\bigr)$ be a uniformly regular manifold 
on~$S$ and let $g$ be a Riemannian metric for~$M$. Suppose 
\HAbeq \label{HAN.Rr} 
 \HAbal
 \\ 
 \noalign{\vskip-6\jot}  
 \rm{(i)} \qquad    &\HAka_*g\sim g_m\;, 
                     \ \ \HAka\in\HAgK_S\;.\\                      
 \noalign{\vskip-1\jot} 
 \rm{(ii)}\qquad    &\|\HAka_*g\|_{k,\HAiy}\leq c(k)\;, 
                     \ \ \HAka\in\HAgK_S\;, 
                     \ \ k\in\HABN\;.
 \HAeal
\HAeeq 
It follows from (\ref{HAN.Keq}) that (\ref{HAN.Rr}) prevails if $\HAgK_S$~is 
replaced by any~$\tilde\HAgK_S$ with 
\HAhb{\smash{\tilde\HAgK\HAapproxS\HAgK}\HAsmtB}. Thus it is meaningful 
to say that $g$ is~a \emph{Riemannian metric for} 
$\bigl(M,\HAeea{\HAgK}_S\bigr)$ which is \emph{uniformly regular on}~$S$ 
if (\ref{HAN.Rr}) applies to some, hence every, representative of~%
$\HAeea{\HAgK}_S$. We also say that two such metrics $g$ and~$\bar g$ 
are \emph{equivalent on}~$S$, 
\ \HAhb{\smash{g\HAsimS\bar g}\HAsmtB}, if 
\HAhb{g\HAsn S\sim\bar g\HAsn S}. This defines an equivalence relation on 
the class of all Riemannian metrics for 
$\bigl(M,\HAeea{\HAgK}_S\bigr)$ which are uniformly regular on~$S$. 
Similarly as above, 
$\HAeea{g}_S$~is the equivalence class containing the 
representative~$g$. 

\par 
By~a \HAhh{uniformly regular Riemannian manifold} \emph{on}~$S$, written as  
$\bigl(M,\HAeea{\HAgK}_S,\HAeea{g}_S\bigr)$, we mean a uniformly 
regular manifold 
$\bigl(M,\HAeea{\HAgK}_S\bigr)$ on~$S$ equipped with an equivalence 
class of uniformly regular Riemannian metrics on~$S$. It is a convenient 
abuse of language to say instead that $(M,\HAgK,g)$ is a Riemannian 
manifold which is 
uniformly regular on~$S$. Even more loosely, $\HAMg$~is 
(a~manifold which is) \HAhh{uniformly regular} \emph{on}~$S$, if there exists 
an atlas~$\HAgK$ which is uniformly regular on~$S$ such that 
$\bigl(M,\HAeea{\HAgK}_S,\HAeea{g}_S\bigr)$ is a uniformly regular 
Riemannian manifold on~$S$. 

\par 
Suppose 
\HAhb{\rho\in C^\HAiy\bigl(M,(0,\HAiy)\bigr)} and let $g$ be a Riemannian 
metric for~$M$. Then $\rho$ is~a \emph{singularity function for $\HAMg$ 
on}~$S$, if there exists an atlas~$\HAgK$ which is uniformly regular on~$S$ 
such that $(M,\HAgK,g/\rho^2)$ is a Riemannian manifold which is uniformly 
regular on~$S$. Two singularity functions are \emph{equivalent on}~$S$, 
\HAhb{\smash{\rho\HAapproxS\tilde\rho}\HAsmtB}, if 
\HAhb{\smash{\HAgK\HAapproxS\tilde\HAgK}\HAsmtB} and 
\HAhb{\smash{g/\rho^2\HAsimS g/\tilde\rho^2}\HAsmtB}. We denote by~%
$\HAeea{\rho}_S$ the equivalence class of singularity functions 
containing the representative~$\rho$, the \emph{singularity type} of~$\HAMg$ 
on~$S$. Finally, the \HAhh{Riemannian manifold}~$\HAMg$ \HAhh{is singular of 
type}~%
$\HAeea{\rho}_S$---more loosely: 
\HAhh{of type}~$\rho$ \emph{on}~$S$---if 
$(M,g/\rho^2)$ is uniformly regular on~$S$. Clearly, $\HAMg$~is 
singular of type~%
$\HAeea{\HAmf{1}}_S$ iff it is uniformly regular on~$S$. 

\par 
A~pair $\HArgK$ is~a \emph{singularity datum} for~$\HAMg$ on~$S$ if 
\HAbeq \label{HAN.sd} 
 \HAbal 
  \\ 
 \noalign{\vskip-6\jot}   
 \rm{(i)} \qquad    &\rho\in C^\HAiy\bigl((M,(0,\HAiy)\bigr)\;.\\
 \noalign{\vskip-1\jot} 
 \rm{(ii)} \qquad   &\HAgK\text{ is an atlas which is uniformly regular 
                     on }S\;.\\                      
 \noalign{\vskip-1\jot} 
 \rm{(iii)} \qquad  &\|\HAka_*\rho\|_{k,\HAiy}\leq c(k)\rho_\HAka\;, 
                     \ \ \HAka\in\HAgK_S\;, 
                     \ \ k\in\HABN\;,\\ 
 \noalign{\vskip-1\jot} 
                     &\text{where }
                     \rho_\HAka:=\HAka_*\rho(0)=\rho 
                     \bigl(\HAka^{-1}(0)\bigr)\;.\\ 
 \noalign{\vskip-1\jot} 
 \rm{(iv)} \qquad   &\rho\HAsn U_{\HAcoU\HAka}\sim\rho_\HAka\;, 
                     \ \ \HAka\in\HAgK_S\;.\\                      
 \noalign{\vskip-1\jot} 
 \rm{(v)} \qquad    &\HAka_*g\sim\rho_\HAka^2g_m\;, 
                     \ \ \HAka\in\HAgK_S\;.\\                      
 \noalign{\vskip-1\jot} 
 \rm{(vi)}\qquad    &\|\HAka_*g\|_{k,\HAiy}\leq c(k)\rho_\HAka^2\;, 
                     \ \ \HAka\in\HAgK_S\;, 
                     \ \ k\geq0\;.
 \HAeal
\HAeeq 
It is easily verified that $(M,\HAgK,g/\rho^2)$ is uniformly regular 
on~$S$ if $\HArgK$~is a singularity datum for~$\HAMg$ on~$S$. Thus 
$\rho$~is a singularity function for~$\HAMg$ if 
$\HArgK$~is a singularity datum for it. 

\par 
The `localization' of all these quantities `to~$S$' is introduced for 
technical reasons. Our principal interest concerns the choice 
\HAhb{S=M}. In this case the qualifiers `on~$S$' and the symbol~$S$ 
are omitted, of course. 
\section{Preliminaries}\label{HAsec-P}%
Let $\HAMg$ be a Riemannian manifold and 
\HAhb{X\HAis M}. For 
\HAhb{p,q\in X} we denote by 
\HAhb{d_X(p,q)=d_{g,X}(p,q)} the distance between $p$ and~$q$ \emph{in}~$X$. 
Thus $d_X(p,q)$ is the infimum of the lengths of all piece-wise smooth paths 
of~$M$ joining~$p$ to~$q$ within~$X$. If $p$ and~$q$ lie in different 
connected components, then 
\HAhb{d_X(p,q):=\HAiy}. 

\par We suppose 
\HAhb{\HABX\in\{\HABR^m,\HABH^m\}}, 
\ $X$~is open in~$\HABX$, and 
\HAhb{S\HAis X}. We denote by~$\HAda_S$ the distance in~$X$ from~$S$ to 
\HAhb{\HABX\HAssm X}, that is, 
\HAhb{\HAda_S:=\inf_{p\in S}d_X(p,\HABX\HAssm X)}, where 
\HAhb{d_X(p,\HAes):=\HAiy}. Then we assume 
\HAhb{0<\HAda\leq\HAda_S/\sqrt{m}} and set 
$$ 
Z_{\HAda,X}:=\bigl\{\,z\in\HABZ^m\cap\HABX 
\ ;\ \HAda(z+Q_z)\cap X\neq\HAes\,\bigr\}\;, 
$$ 
where 
\HAhb{Q_z:=Q^m} if 
\HAhb{z\in\HAci X} and 
\HAhb{Q_z:=Q^m\cap\HABH^m} otherwise. Given 
\HAhb{z\in Z_{\HAda,X}}, 
\HAbeq \label{HAP.l} 
\HAlda_{\HAda,z}(x):=-z+x/\HAda 
\HAqa x\in\HAda(z+Q_z)\cap X\;. 
\HAeeq 
Then 
$$ 
\HAgL=\HAgL(\HAda,X):=\{\,\HAlda_{\HAda,z}\ ;\ z\in Z_{\HAda,X}\,\} 
$$ 
is an atlas for~$X$ of multiplicity~$2^m$. Since 
\HAhb{\HAdiam\bigl(\HAda(z+Q_z)\bigr)=\sqrt{m}\HAda\leq\HAda_S} we see that 
$\HAgL_S$~is normalized and shrinkable to 
\HAhb{1/2}. Given 
\HAhb{\HAlda,\tilde\HAlda\in\HAgL_S} with 
\HAhb{\HAlda=\HAlda_{\HAda,z}} and  
\HAhb{\tilde\HAlda=\HAlda_{\HAda,\tilde z}}, 
$$ 
\tilde\HAlda\circ\HAlda^{-1}(y)=z-\tilde z+y 
\HAqa y\in\HAlda(U_\HAlda\cap U_{\tilde\HAlda})\;. 
$$ 
This shows that $\HAgL$~is uniformly regular on~$S$. Furthermore, denoting 
by~$\HApl$ the Fr\'echet derivative, 
\HAbeq \label{HAP.dg} 
\HApl\HAlda^{-1}=\HAda 1_m 
\HAqb \HAlda_*g_X=\HAda^2g_m 
\HAqa \HAlda\in\HAgL_S\;, 
\HAeeq 
where 
\HAhb{g_X=\HAia_X^*g_m} and $1_m$~is the identity 
in~$\HABR^{m\times m}$. In particular, setting 
\HAhb{X:=\HABX} it follows that  
\HAbeq \label{HAP.RH} 
\HABR^m\text{ and }\HABH^m\text{ are uniformly regular Riemannian 
manifolds}\;. 
\HAeeq 

\par 
Let $M$ be an \hbox{$m$-dimensional} manifold and 
\HAhb{S\HAis M}. Suppose $\HAgK$~is an atlas for~$M$ which is uniformly 
regular on~$S$. Then there exists 
\HAhb{r\in(0,1)} such that (\ref{HAN.S}) is a cover of~$S$. Given 
\HAhb{\HAka\in\HAgK_S}, we fix 
\HAhb{\HAda\in\bigl(0,(1-r)/\sqrt{m}\bigr)} and put 
\HAhb{\HAgL_\HAka:=\HAgL(\HAda,Q_\HAka^m)}. By the above $\HAgL_\HAka$~is 
an atlas for~$Q_\HAka^m$ of multiplicity~$2^m$ which is uniformly regular 
on~$rQ_\HAka^m$ and shrinkable to 
\HAhb{1/2} on~$rQ_\HAka^m$. Hence 
\HAbeq \label{HAP.M} 
\HAgM=\HAgM(\HAda,\HAgK) 
:=\{\,\HAlda\circ\HAka\ ;\ \HAka\in\HAgK_S,\ \HAlda\in\HAgL_\HAka\} 
\cup(\HAgK\HAssm\HAgK_S) 
\HAeeq 
is an atlas for~$M$ such that 
\HAbeq \label{HAP.Ul} 
U_{\HAlda\circ\HAka}=\HAka^{-1}(U_\HAlda)\HAis U_{\HAcoU\HAka} 
\HAqa \HAka\in\HAgK_S 
\HAqb \HAlda\in\HAgL_\HAka\;. 
\HAeeq 
It has multiplicity at most~$2^m\HAmult(\HAgK_S)$ on~$S$ and is 
shrinkable to 
\HAhb{1/2} on~$S$. For 
\HAhb{\umu,\tilde\umu\in\HAgM_S} with 
\HAhb{\umu=\HAlda\circ\HAka} and  
\HAhb{\tilde\umu=\tilde\HAlda\circ\tilde\HAka} we get from (\ref{HAP.l}) 
and~(\ref{HAP.dg}) 
\HAbeq \label{HAP.dd} 
\|\HApa(\tilde\umu\circ\umu^{-1}\|_\HAiy 
\leq\HAda^{-1}\HAda^{|\HAal|}\,\|\HApa(\tilde\HAka\circ\HAka^{-1})\|_\HAiy 
\HAqa \HAal\in\HABN^m\HAssm\{0\}\;. 
\HAeeq 
Note that 
\HAhb{\HAlda\circ\HAka\in\HAgM_S} implies 
\HAhb{\HAka\in\HAgK_S}. Thus, since $\HAgK_S$~is uniformly regular and 
\HAhb{\HAda\leq1}, 
$$ 
\|\HApa(\tilde\umu\circ\umu^{-1})\|_\HAiy\leq c(\HAal) 
$$ 
for 
\HAhb{\umu,\tilde\umu\in\HAgM_S} with 
\HAhb{\umu=\HAlda\circ\HAka} and  
\HAhb{\tilde\umu=\tilde\HAlda\circ\tilde\HAka} and 
\HAhb{\HAal\in\HABN^m\HAssm\{0\}}. Hence 
\HAbeq \label{HAP.Mu} 
\HAgM\text{ is uniformly regular on }S\;. 
\HAeeq 

\par 
Let $g$ be a Riemannian metric for~$M$. Then (\ref{HAP.dg}) implies 
\HAbeq \label{HAP.mg} 
\umu_*g=\HAlda_*\HAka_*g=\HAda^2\HAka_*g 
\HAqa \umu=\HAlda\circ\HAka\in\HAgM_S\;. 
\HAeeq 
Consequently, 
\HAbeq \label{HAP.dag} 
\|\HApa(\umu_*g)\|_\HAiy\leq c(\HAal)\HAda^2\,\|\HApa(\HAka_*g)\|_\HAiy 
\HAqa \umu\in\HAgM_S 
\HAqb \HAal\in\HABN^m\;.  
\HAeeq 

\par 
Suppose $\HArgK$ is a singularity datum for~$M$ on~$S$. Then we infer from 
(\ref{HAN.sd})(iii) and~(iv) and from~(\ref{HAP.Ul}) 
\HAbeq \label{HAP.mr} 
\umu_*\rho=(\HAka_*\rho)\circ\HAlda^{-1}\sim(\HAka_*\rho)(0) 
=\rho_\HAka\sim\rho_\umu 
\HAeeq 
and, using 
\HAhb{\HAda\leq1} once more,  
\HAbeq \label{HAP.dmr} 
\|\HApa(\umu_*\rho)\|_\HAiy\leq\HAda^{|\HAal|}\, \|\HApa(\umu_*\rho)\|_\HAiy 
\leq c(\HAal)\rho_\HAka\leq c(\HAal)\rho_\umu 
\HAnpb 
\HAeeq 
for 
\HAhb{\umu=\HAlda\circ\HAka\in\HAgM_S} and 
\HAhb{\HAal\in\HABN^m}. 

\par 
These considerations show, in particular, that a uniformly regular 
Riemannian manifold possesses a uniformly regular atlas consisting of 
arbitrarily small charts; also see Lemma~\ref{HAlem-P.V}. 

\par 
Let $\HAtMtg$ be a Riemannian manifold without boundary. Then we endow the 
product manifold 
\HAhb{M\times\tilde M} with the product metric, denoted (slightly loosely) 
by 
\HAhb{g+\tilde g}. 
\begin{theorem}\label{HAthm-P.MM} 
Suppose $\rho$~is a bounded singularity function for $\HAMg$ on 
\HAhb{S\HAis M} and $\tilde\rho$~is one for $\HAtMtg$ on 
\HAhb{\tilde S\HAis\tilde M}. Then 
\HAhb{\rho\otimes\tilde\rho} is a singularity function for 
\HAhb{(M\times\tilde M,g+\tilde g)} on 
\HAhb{S\times\tilde S}. 
\end{theorem}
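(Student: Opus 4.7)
The plan is to construct a singularity datum $(\rho\otimes\tilde\rho,\HAgN)$ for $(M\times\tilde M,g+\tilde g)$ on $S\times\tilde S$ by refining the product of chart atlases. The naive choice $\HAgN_0:=\{\,\HAka\times\tilde\HAka\ ;\ \HAka\in\HAgK,\ \tilde\HAka\in\tilde\HAgK\,\}$ does \emph{not} work: in view of (\ref{HAN.sd})(v) applied to each factor,
$$
(\HAka\times\tilde\HAka)_*(g+\tilde g)
\sim \rho_\HAka^2g_m+\tilde\rho_{\tilde\HAka}^2g_{\tilde m}\;,
$$
and this is equivalent to the desired form $(\rho_\HAka\tilde\rho_{\tilde\HAka})^2g_{m+\tilde m}$ only when $\rho_\HAka\sim\tilde\rho_{\tilde\HAka}$. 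The key idea is therefore to rescale the two factors by \emph{different} amounts so that the two terms become comparable with $(\rho_\HAka\tilde\rho_{\tilde\HAka})^2 g_{m+\tilde m}$.

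Concretely, pick $c_0>0$ so small that $c_0\sup\rho\leq1/(2\sqrt m)$ and $c_0\sup\tilde\rho\leq1/(2\sqrt{\tilde m})$; this is possible precisely because $\rho$ and $\tilde\rho$ are bounded. For each pair $(\HAka,\tilde\HAka)\in\HAgK_S\times\tilde\HAgK_{\tilde S}$ set $\HAda_{\HAka,\tilde\HAka}:=c_0\tilde\rho_{\tilde\HAka}$ and $\tilde\HAda_{\HAka,\tilde\HAka}:=c_0\rho_\HAka$, and form the refinements
$$
\HAgM_{\HAka,\tilde\HAka}:=\{\,\HAlda_{\HAda_{\HAka,\tilde\HAka},z}\circ\HAka\,\}\;,\qquad
\tilde\HAgM_{\HAka,\tilde\HAka}:=\{\,\HAlda_{\tilde\HAda_{\HAka,\tilde\HAka},\tilde z}\circ\tilde\HAka\,\}
$$
as in (\ref{HAP.M}). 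Define $\HAgN$ to consist of all products $\umu\times\tilde\umu$ with $\umu\in\HAgM_{\HAka,\tilde\HAka}$ and $\tilde\umu\in\tilde\HAgM_{\HAka,\tilde\HAka}$ for some $(\HAka,\tilde\HAka)\in\HAgK_S\times\tilde\HAgK_{\tilde S}$, together with the obvious product charts covering $(M\times\tilde M)\setminus(S\times\tilde S)$. Each $\umu\times\tilde\umu$ is a normalized chart (half-spaces only occur in the $M$-factor, which is compatible with $\tilde M$ having empty boundary).

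With this choice, the crucial identity falls out: combining (\ref{HAP.mg}) applied to each factor with (\ref{HAN.sd})(v) gives
$$
(\umu\times\tilde\umu)_*(g+\tilde g)
=\HAda_{\HAka,\tilde\HAka}^2\HAka_*g+\tilde\HAda_{\HAka,\tilde\HAka}^2\tilde\HAka_*\tilde g
\sim c_0^2\rho_\HAka^2\tilde\rho_{\tilde\HAka}^2\,g_{m+\tilde m}
\sim (\rho\otimes\tilde\rho)_{\umu\times\tilde\umu}^2\,g_{m+\tilde m},
$$
where the last step uses (\ref{HAP.mr}) applied factor-wise to identify $(\rho\otimes\tilde\rho)_{\umu\times\tilde\umu}=(\umu_*\rho)(0)(\tilde\umu_*\tilde\rho)(0)\sim\rho_\HAka\tilde\rho_{\tilde\HAka}$. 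This verifies (\ref{HAN.sd})(v). Conditions (iii), (iv), (vi) follow similarly by combining the factor-wise estimates (\ref{HAP.mr}), (\ref{HAP.dmr}), (\ref{HAP.dag}) via the Leibniz rule; (i) is obvious.

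The main obstacle, and the real content of the argument, is verifying that $\HAgN$ is uniformly regular on $S\times\tilde S$. Finite multiplicity is straightforward: a point $(p,\tilde p)$ lies in at most $\HAmult(\HAgK_S)\HAmult(\tilde\HAgK_{\tilde S})\cdot 2^{m+\tilde m}$ charts of $\HAgN$. The delicate point is bounded derivatives of coordinate changes between two overlapping charts $\umu\times\tilde\umu$ and $\umu'\times\tilde\umu'$, built from $(\HAka,\tilde\HAka)$ and $(\HAka',\tilde\HAka')$ respectively. The estimate (\ref{HAP.dd}) yields
$$
\|\HApa(\umu'\circ\umu^{-1})\|_\HAiy
\leq \HAda_{\HAka',\tilde\HAka'}^{-1}\HAda_{\HAka,\tilde\HAka}^{|\HAal|}\,\|\HApa(\HAka'\circ\HAka^{-1})\|_\HAiy,
$$
and for this to be uniformly bounded in $\HAal$ and the chart pair we need $\HAda_{\HAka,\tilde\HAka}\sim \HAda_{\HAka',\tilde\HAka'}$, i.e., $\tilde\rho_{\tilde\HAka}\sim\tilde\rho_{\tilde\HAka'}$ whenever the charts overlap. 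But overlap of $\umu\times\tilde\umu$ and $\umu'\times\tilde\umu'$ forces $\tilde U_{\HAcoU\tilde\HAka}\cap\tilde U_{\HAcoU\tilde\HAka'}\neq\HAes$, and then (\ref{HAN.sd})(iv) applied to the $\tilde M$-datum gives $\tilde\rho_{\tilde\HAka}\sim\tilde\rho\sim\tilde\rho_{\tilde\HAka'}$ on the overlap. The analogous argument with $\rho$ handles the $\tilde\umu,\tilde\umu'$ coordinate change, and higher-order derivatives are controlled in exactly the same way. This establishes (\ref{HAN.KS})(ii) for $\HAgN$, completing the proof.
\qed
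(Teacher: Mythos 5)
Your proposal is correct and follows essentially the same route as the paper: the paper also refines the $M$-charts at scale $\min\{\tilde\rho_{\tilde\HAka},\HAda\}$ and the $\tilde M$-charts at scale $\min\{\rho_\HAka,\HAda\}$ (equivalent, via boundedness, to your $c_0\tilde\rho_{\tilde\HAka}$ and $c_0\rho_\HAka$), and verifies uniform regularity of the product refinement by exactly the overlap argument you give. The only point to tidy is that the rescaling constant must be tied to the shrinking parameter $r$ of the atlases (i.e.\ $c_0\sup\tilde\rho\leq(1-r)/\sqrt{m+\tilde m}$) so that the refined charts of \eqref{HAP.M} stay inside the original patches, but this is a cosmetic adjustment.
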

\begin{proof} 
(1) 
We choose 
\HAhb{0<\bar r<r<1} and an atlas~$\HAgK$ for~$M$, resp.\ $\tilde\HAgK$ 
for~$\tilde M$, such that~$\HAgK$, resp.~$\tilde\HAgK$, 
is shrinkable to~$\bar r$ 
on~$S$, resp.~$\tilde S$, and $\HArgK$, resp.~$\HAtrtK$, is a 
singularity datum 
for $\HAMg$ on~$S$, resp.\ $\HAtMtg$ on~$\tilde S$. Denoting by~$m$, 
resp.~$\tilde m$, the dimension of~$M$, resp.~$\tilde M$, we set 
\HAhb{\HAda:=(1-r)\big/\sqrt{m+\tilde m}}. Given 
\HAhb{\HAka\in\HAgK_S} and 
\HAhb{\tilde\HAka\in\HAgK_{\tilde S}}, we put 
\HAbeq \label{HAP.dk} 
\HAda_{\tilde\HAka}:=\min\{\tilde\rho_{\tilde\HAka},\HAda\} 
\HAqb \tilde\HAda_\HAka:=\min\{\rho_\HAka,\HAda\} \;. 
\HAeeq 
We set 
$$ 
\HAgM':=\bigl\{\,(\HAlda\circ\HAka)\times(\tilde\HAlda\circ\tilde\HAka) 
\ ;\ \HAka\in\HAgK_S,\ \tilde\HAka\in\tilde\HAgK_{\tilde S}, 
\ \HAlda\in\HAgL(\HAda_{\tilde\HAka},Q_\HAka^m), 
\ \tilde\HAlda\in\HAgL(\tilde\HAda_\HAka,Q^{\tilde m})\,\bigr\} 
$$ 
and 
$$ 
\HAgM'':=\bigl\{\,\HAka\times\tilde\HAka 
\ ;\ \text{either }\HAka\in\HAgK\HAssm\HAgK_S 
\text{ or }\tilde\HAka\in\tilde\HAgK\HAssm\tilde\HAgK_{\tilde S}\,\bigr\}\;. 
$$ 
Then 
\HAhb{\HAgM:=\HAgM'\cup\HAgM''} is an atlas for 
\HAhb{M\times\tilde M} and a refinement of the product atlas 
\HAhb{\HAgK\otimes\tilde\HAgK} in the sense that for each 
\HAhb{\umu\in\HAgM} there exists 
\HAhb{\HAka\times\tilde\HAka\in\HAgK\otimes\tilde\HAgK} such that 
\HAhb{U_\umu\HAis U_{\HAcoU\HAka\times\tilde\HAka}}. Moreover, 
\HAbeq \label{HAP.MM} 
\HAgM_{S\times\tilde S}\HAis\HAgM'\;. 
\HAnpb 
\HAeeq 
Note that $\HAgM$~is normalized on 
\HAhb{S\times\tilde S} and has finite multiplicity thereon. 

\par 
Suppose 
\HAhb{\umu_i=(\HAlda_i\circ\HAka_i) 
   \times(\tilde\HAlda_i\circ\tilde\HAka_i)\in\HAgM'} for 
\HAhb{i=1,2}, and 
\HAhb{U_{\umu_1}\cap U_{\umu_2}\neq\HAes}. Then both 
\HAhb{U_{\HAka_1}\cap U_{\HAka_2}} and 
\HAhb{U_{\tilde\HAka_1}\cap U_{\tilde\HAka_2}} are nonempty. Hence 
\HAhb{\tilde\rho_{\tilde\HAka_1}\sim\tilde\rho_{\tilde\HAka_2}} and 
\HAhb{\rho_{\HAka_1}\sim\rho_{\HAka_2}}. From this, 
\HAhb{\HAda_{\tilde\HAka_i}\leq\tilde\rho_{\tilde\HAka_i}}, and the 
boundedness of~$\tilde\rho$ we infer 
\HAhb{\HAda_{\tilde\HAka_1}/\HAda_{\tilde\HAka_2}\leq c} and, 
analogously, 
\HAhb{\tilde\HAda_{\HAka_1}/\tilde\HAda_{\HAka_2}\leq c}. Thus, using 
(\ref{HAP.l}),\, (\ref{HAP.dg}), the finite multiplicity 
of~$\HAgM_{S\times\tilde S}$ and the fact that $\HAka_i$ 
and~$\tilde\HAka_i$ are normalized, we obtain (cf.~(\ref{HAP.dd})) 
$$ 
\|\umu_1\circ\umu_2^{-1}\|_{k,\HAiy} 
\leq c\bigl(\|\HAka_1\circ\HAka_2^{-1}\|_{k,\HAiy} 
+\|\tilde\HAka_1\circ\tilde\HAka_2^{-1}\|_{k,\HAiy}\bigr)\leq c(k) 
\HAnpb 
$$ 
for 
\HAhb{\umu_1,\umu_2\in\HAgM_{S\times\tilde S}} and 
\HAhb{k\in\HABN}. This proves that $\HAgM$~is uniformly regular on 
\HAhb{S\times\tilde S}. 

\par 
(2) 
By adapting (\ref{HAP.mr}) and~(\ref{HAP.dmr}) to the present setting 
we find, due to~(\ref{HAP.MM}), 
\HAbeq \label{HAP.rr0} 
\umu_*(\rho\otimes\tilde\rho) 
\sim(\rho\otimes\tilde\rho)_\umu 
\sim\rho_\HAka\tilde\rho_{\tilde\HAka} 
\HAeeq 
for 
\HAhb{\umu=(\HAlda\circ\HAka)\times(\tilde\HAlda\circ\tilde\HAka) 
   \in\HAgM_{S\times\tilde S}} and 
$$ 
\|\umu_*(\rho\otimes\tilde\rho)\|_{k,\HAiy} 
\leq c(k)(\rho\otimes\tilde\rho)_\umu 
\HAqa \umu\in\HAgM_{S\times\tilde S} 
\HAqb k\in\HABN\;. 
$$ 

\par 
(3) 
For 
\HAhb{\umu=(\HAlda\circ\HAka)\times(\tilde\HAlda\circ\tilde\HAka)\in\HAgM'} 
we find by~(\ref{HAP.mg}) 
\HAbeq \label{HAP.mgg} 
\HAbal 
\umu_*(g+\tilde g) 
&=(\HAlda\circ\HAka)_*g+(\tilde\HAlda\circ\tilde\HAka)_*\tilde g\\ 
&\sim\HAda_{\tilde\HAka}^2\HAka_*g+\tilde\HAda_\HAka^2\tilde\HAka_*\tilde g 
 \sim\HAda_{\tilde\HAka}^2\rho_\HAka^2g_m 
 +\tilde\HAda_\HAka^2\tilde\rho_{\tilde\HAka}^2g_{\tilde m}\;, 
\HAeal 
\HAeeq 
uniformly with respect to 
\HAhb{\umu\in\HAgM_{S\times\tilde S}}. Definition~(\ref{HAP.dk}) and 
the boundedness of $\rho$ and~$\tilde\rho$ imply 
\HAhb{\HAda_{\tilde\HAka}\sim\tilde\rho_{\tilde\HAka}} and 
\HAhb{\tilde\HAda_\HAka\sim\rho_\HAka}. Using this and~(\ref{HAP.rr0}) 
we get from~(\ref{HAP.mgg}) 
$$ 
\umu_*(g+\tilde g) 
\sim\rho_\HAka^2\tilde\rho_{\tilde\HAka}^2(g_m+g_{\tilde m}) 
\sim(\rho\otimes\tilde\rho)_\umu^2g_{m+\tilde m} 
\HAqa \umu\in\HAgM_{S\times\tilde S}\;. 
$$ 
Lastly, we infer from (\ref{HAP.dag}) and~(\ref{HAP.rr0}) 
$$ 
\HAbal 
\|\umu_*(g+\tilde g)\|_{k,\HAiy} 
&\leq c(k)\bigl(\HAda_{\tilde\HAka}^2\,\|\HAka_*g\|_{k,\HAiy} 
 +\tilde\HAda_\HAka^2\,\|\tilde\HAka_*\tilde g\|_{k,\HAiy}\bigr)\\ 
&\leq c(k)\rho_\HAka^2\tilde\rho_{\tilde\HAka}^2 
 \leq c(k)(\rho\otimes\tilde\rho)_\umu^2 
\HAeal 
\HAnpb 
$$ 
for 
\HAhb{\umu\in\HAgM_{S\times\tilde S}} and 
\HAhb{k\in\HABN}. This proves the assertion. 
\qed 
\end{proof} 
Our next considerations exploit the `localization to~$S$'. 
\begin{lemma}\label{HAlem-P.V} 
Let $\HAMg$ be uniformly regular on 
\HAhb{S\HAis M}. Suppose $V$~is open in~$M$ and 
\HAbeq \label{HAP.dgS} 
d_{\bar V}(S,M\HAssm V)>0\;. 
\HAeeq 
Then there exists an atlas~$\HAgM$ for~$M$ belonging to the structure 
of uniform regularity on~$S$ such that 
\HAhb{U_\umu\HAis V} for 
\HAhb{\umu\in\HAgM_S}. 
\end{lemma}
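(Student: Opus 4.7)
The plan is to refine a representative $\HAgK$ of the uniform regularity structure on~$S$ via the construction $\HAgM(\HAda,\HAgK)$ of~(\ref{HAP.M}), choosing the grid size $\HAda$ small enough that every chart in the refinement whose domain meets~$S$ has $g$-diameter strictly below the buffer provided by hypothesis~(\ref{HAP.dgS}). Put $r_0:=d_{\bar V}(S,M\HAssm V)>0$ and let $r\in(0,1)$ witness the shrinkability of~$\HAgK_S$.

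First I would establish a uniform diameter estimate. Condition~(\ref{HAN.Rr})(i) gives $\HAka_*g\leq c\,g_m$ on~$Q_\HAka^m$ for every $\HAka\in\HAgK_S$, with $c$ independent of~$\HAka$. Since the refining pieces $\HAda(z+Q_z)\cap Q_\HAka^m$ are convex with Euclidean diameter at most $2\sqrt{m}\,\HAda$, any two points of $U_{\HAlda\circ\HAka}=\HAka^{-1}\bigl(\HAda(z+Q_z)\cap Q_\HAka^m\bigr)$ are joined inside $U_{\HAlda\circ\HAka}$ by the pullback of a line segment whose $g$-length is at most $\sqrt{c}\cdot 2\sqrt{m}\,\HAda=:C\HAda$, so $\HAdiam_g(U_{\HAlda\circ\HAka})\leq C\HAda$ uniformly.

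Now fix $\HAda\in\bigl(0,(1-r)/\sqrt{m}\bigr)$ so small that $C\HAda<r_0$, and set $\HAgM:=\HAgM(\HAda,\HAgK)$. That $\HAgM$ is uniformly regular on~$S$ is~(\ref{HAP.Mu}); checking $\HAgM\HAapproxS\HAgK$ in the sense of~(\ref{HAN.Keq}) is routine: the derivative bounds follow from~(\ref{HAP.dd}) together with the affinity of $\HAlda$ and $\HAlda^{-1}$ (whose norms depend only on $\HAda$, now fixed), while the overlap count is bounded because, for $\HAka\in\HAgK_S$ fixed, only finitely many $\tilde\HAka\in\HAgK_S$ meet~$U_\HAka$ (by multiplicity of $\HAgK_S$) and for each such $\tilde\HAka$ only $O(\HAda^{-m})$ affine tiles $\HAlda\in\HAgL(\HAda,Q_{\tilde\HAka}^m)$ can produce a $\HAlda\circ\tilde\HAka$ with $U_{\HAlda\circ\tilde\HAka}\cap U_\HAka\neq\HAes$. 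Hence $\HAgM\in\HAeea{\HAgK}_S$.

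It remains to verify $U_\umu\HAis V$ for every $\umu\in\HAgM_S$, and this is the only genuinely geometric step and, I expect, the main obstacle. Pick $p\in U_\umu\cap S$; for any $q\in U_\umu$, the pullback under $\umu^{-1}$ of the straight segment in $Q_\umu^m$ from $\umu(p)$ to $\umu(q)$ is a smooth path $\gamma$ in $U_\umu$ of $g$-length at most $C\HAda<r_0$. If $q\notin V$, set $t_0:=\inf\{\,t\,;\,\gamma(t)\notin V\,\}$; openness of~$V$ forces $\gamma(t_0)\in\bar V\HAssm V\HAis M\HAssm V$, while $\gamma$ on $[0,t_0]$ lies in~$\bar V$. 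Its $g$-length then bounds $d_{\bar V}\bigl(p,\gamma(t_0)\bigr)\leq C\HAda<r_0=d_{\bar V}(S,M\HAssm V)$, contradicting $\gamma(t_0)\in M\HAssm V$. The subtle point is precisely this last argument: one must arrange that the connecting path actually stays in $\bar V$ up to the moment it first leaves~$V$, so that hypothesis~(\ref{HAP.dgS}) becomes applicable.
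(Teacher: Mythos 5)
Your proof is correct and takes essentially the same route as the paper's: refine a representative atlas via the construction $\HAgM(\HAda,\HAgK)$ of (\ref{HAP.M}), use $\HAka_*g\sim g_m$ to force the $g$-diameters of the new chart domains meeting~$S$ below $d_{\bar V}(S,M\HAssm V)$, and verify $\HAgM\HAapproxS\HAgK$. The paper records the diameter bound and stops there; your first-exit argument showing that the connecting path stays in~$\bar V$ until it leaves~$V$ merely supplies the containment detail the paper leaves implicit.
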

\begin{proof} 
Let $\HAgK$ be an atlas belonging to the structure of uniform regularity 
on~$S$. Choose 
\HAhb{r\in(0,1)} such that (\ref{HAN.S})~is a cover of~$S$. Fix 
\HAhb{\HAda\in\bigl(0,(1-r)/\sqrt{m}\bigr)} and set 
\HAhb{\HAgM:=\HAgM(\HAda,\HAgK)}. Then $\HAgM_S$~is uniformly regular 
by~(\ref{HAP.Mu}). 

\par 
It follows from 
\HAhb{\HAka_*g\sim g_m} for 
\HAhb{\HAka\in\HAgK_S},\, (\ref{HAP.mg}), and (\ref{HAP.dgS}) that we can 
choose~$\HAda$ so small that 
\HAhb{\HAdiam(U_\umu)<d_{\bar V}(S,M\HAssm V)} for 
\HAhb{\umu\in\HAgM_S}. 

\par 
Lastly, we infer from (\ref{HAN.KS}),\, (\ref{HAP.l}), and~(\ref{HAP.dg}) 
that 
$$ 
\|\HAka\circ\umu^{-1}\|_{k,\HAiy}+\|\umu\circ\HAka^{-1}\|_{k,\HAiy} 
\leq c(k) 
\HAqa \HAka\in\HAgK_S 
\HAqb \umu\in\HAgM_S\;. 
\HAnpb 
$$ 
Thus 
\HAhb{\smash{\HAgM\HAapproxS\HAgK}\HAsmtB}, 
which proves the claim. 
\qed 
\end{proof} 
The following lemma will be fundamental for the construction of 
singular Riemannian manifolds by `patching together simpler pieces'. 
\begin{lemma}\label{HAlem-P.P} 
\setlength{\svitemindent}{1.5\parindent}
Suppose: 
\begin{enumerate}  
\item[{\rm(i)}] 
${}$
\HAhb{\{\,V_\HAal\ ;\ \HAal\in\HAsA\,\}} is a finite family of open subsets 
of~$M$. 
\item[{\rm(ii)}] 
${}$
\HAhb{S_\HAal\HAis V_\HAal} and 
\HAhb{\{\,S_\HAal\ ;\ \HAal\in\HAsA\,\}} is a covering of~$M$. 
\item[{\rm(iii)}] 
${}$
$(\rho_\HAal,\HAgK_\HAal)$ is a singularity datum for~$(V_\HAal,g)$ 
on~$S_\HAal$. 
\item[{\rm(iv)}] 
${}$
\HAhb{\rho_\HAal\HAsn V_\HAal\cap V_{\tilde\HAal} 
   \sim\rho_{\tilde\HAal}\HAsn V_\HAal\cap V_{\tilde\HAal}}, 
\ \HAhb{\HAal,\tilde\HAal\in\HAsA}. 
\item[{\rm(v)}] 
${}$
\HAhb{\|\HAka_{\tilde\HAal}\circ\HAka_\HAal^{-1}\|_{k,\HAiy} 
   +\|\HAka_\HAal\circ\HAka_{\tilde\HAal}^{-1}\|_{k,\HAiy}\leq c(k)}
   \text{ for}\\ 
\HAhb{(\HAka_\HAal,\HAka_{\tilde\HAal}) 
     \in\HAgK_{\HAal,S_\HAal}\times\HAgK_{\tilde\HAal,S_{\tilde\HAal}}},
\ \HAhb{\HAal,\tilde\HAal\in\HAsA}, 
\ \HAhb{\HAal\neq\tilde\HAal}, 
\ \HAhb{k\in\HABN}. 
\end{enumerate} 
Then 
\HAhb{\HAgK:=\bigcup_\HAal\HAgK_{\HAal,S_\HAal}} is a uniformly regular atlas 
for~$M$ and there exists~$\rho$ belonging to  
\HAhb{C^\HAiy\bigl(M,(0,\HAiy)\bigr)} and satisfying 
\HAbeq \label{HAP.rho} 
\rho\HAsn S_\HAal\sim\rho_\HAal 
\HAqa \HAal\in\HAsA\;, 
\HAnpb 
\HAeeq 
such that $\HArgK$ is a singularity datum for~$\HAMg$.  
\end{lemma}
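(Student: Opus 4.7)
The plan is to verify the two halves of the conclusion in turn: first, that $\mathfrak{K}$ is a uniformly regular atlas for $M$, and second, that a suitable $\rho$ can be manufactured for which $(\rho,\mathfrak{K})$ satisfies every clause of~(\ref{HAN.sd}). For the atlas property, shrinkability of each $\mathfrak{K}_\alpha$ on $S_\alpha$ ensures that the reduced family $\mathfrak{K}_{\alpha,S_\alpha}$ covers $S_\alpha$ after some uniform shrinking, and since $\mathsf{A}$ is finite a common shrinking parameter $r\in(0,1)$ may be chosen simultaneously for all $\alpha$, so that the shrunk charts of $\mathfrak{K}$ cover $\bigcup_\alpha S_\alpha=M$. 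Normalization is inherited from each $\mathfrak{K}_\alpha$, and finite multiplicity on $M$ follows from the finiteness of $\mathsf{A}$ combined with the multiplicity bound on each $\mathfrak{K}_{\alpha,S_\alpha}$. The coordinate change estimate (\ref{HAN.KS})(ii) for $\mathfrak{K}$ splits into the intra-block case, handled by uniform regularity of $\mathfrak{K}_\alpha$ on $S_\alpha$, and the inter-block case, which is precisely hypothesis~(v).

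Next, I would fix a smooth partition of unity $\{\chi_\alpha\}_{\alpha\in\mathsf{A}}$ subordinate to $\{V_\alpha\}$ whose pull-backs $\kappa_*\chi_\alpha$ have uniformly bounded derivatives of every order as $\kappa$ ranges over $\mathfrak{K}$. Such a partition is obtained by mollifying characteristic functions of suitable shrinkings of the sets $S_\alpha$ against a fixed bump function carried along the now uniformly regular atlas $\mathfrak{K}$, the uniform derivative bounds being inherited from hypothesis~(v) together with the uniform regularity of each $\mathfrak{K}_\alpha$ on $S_\alpha$. I then set
\[
\rho \, := \, \sum_{\alpha\in\mathsf{A}}\chi_\alpha\rho_\alpha,
\]
with $\chi_\alpha\rho_\alpha$ extended by zero off $V_\alpha$. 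Finiteness of $\mathsf{A}$, positivity of each $\rho_\alpha$, and $\sum_\alpha\chi_\alpha\equiv 1$ render $\rho$ smooth and strictly positive on $M$.

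To verify (\ref{HAP.rho}), fix $p\in S_\alpha$; hypothesis~(iv) yields $\rho_\beta(p)\sim\rho_\alpha(p)$ for every $\beta$ with $\chi_\beta(p)\neq 0$ (such $\beta$ automatically satisfying $p\in V_\alpha\cap V_\beta$), whence $\rho(p)=\sum_\beta\chi_\beta(p)\rho_\beta(p)\sim\rho_\alpha(p)$. For (\ref{HAN.sd})(iii)--(vi) with $\kappa\in\mathfrak{K}_{\alpha,S_\alpha}$, clause (iv) follows from (\ref{HAP.rho}) and the analogous bound for $\rho_\alpha$; clause (iii) follows by expanding $\kappa_*\rho=\sum_\beta(\kappa_*\chi_\beta)(\kappa_*\rho_\beta)$ and applying Leibniz, with each $\kappa_*\chi_\beta$ controlled by construction and each $\kappa_*\rho_\beta$ bounded via the chain rule and hypothesis~(v) applied to the corresponding bound in $\mathfrak{K}_\beta$-charts; clauses (v) and (vi) of (\ref{HAN.sd}) are handled in exactly the same way, with $\rho$ replaced by the ambient metric $g$. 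The main obstacle is the construction of the partition of unity with uniformly bounded derivatives in the charts of $\mathfrak{K}$: on a compact manifold the matter is vacuous and on a single uniformly regular block it is standard, but the delicate point here is that derivative bounds must remain valid across different blocks, which is precisely what hypothesis~(v) is designed to deliver.
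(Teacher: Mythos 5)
Your proposal is correct and follows essentially the same route as the paper: both arguments assemble $\HAgK$ from the blocks $\HAgK_{\HAal,S_\HAal}$ (intra-block estimates from hypothesis~(iii), inter-block from~(v)), define $\rho=\sum_\HAal\HAvp_\HAal\rho_\HAal$ via a partition of unity subordinate to $\{V_\HAal\}$, deduce (\ref{HAP.rho}) from hypothesis~(iv), and then push the conditions of~(\ref{HAN.sd}) through the Leibniz and chain rules. The only cosmetic difference is in how the partition of unity is produced (the paper sums chart-subordinate partitions over $\HAgK_{\HAal,S_\HAal}$ and normalizes, while you mollify along the charts of $\HAgK$), and you are, if anything, more explicit than the paper about the need for uniformly bounded derivatives of $\HAka_*\chi_\HAal$ across the atlas.
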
 
\begin{proof} 
(1) 
It is a consequence of \hbox{(i)--(iii)} and~(v) that $\HAgK$~is 
a uniformly regular atlas for~$M$. 

\par 
(2) 
Since $M$~is locally compact, separable, and metrizable the same 
applies to~$V_\HAal$. Thus $V_\HAal$~is paracompact. Hence there exists a 
smooth partition of unity 
\HAhb{\{\,\chi_{\HAal,\HAba}\ ;\ \HAba\in\HAgK_\HAal\,\}} on~$V_\HAal$ 
subordinate to 
\HAhb{\{\,U_\HAba\ ;\ \HAba\in\HAgK_\HAal\,\}} (e.g.,~\cite{Con01a}). 
We extend each $\chi_{\HAal,\HAba}$ over~$M$ by setting it 
equal to~$0$ outside~$V_\HAal$ and set 
\HAhb{\psi_\HAal:=\sum_{\HAba\in\HAgK_{\HAal,S_\HAal}}\chi_{\HAal,\HAba}}. 
Then 
\HAhb{\psi_\HAal\in C^\HAiy\bigl(M,[0,1]\bigr)} with 
\HAhb{\psi_\HAal\HAsn S_\HAal=\HAmf{1}}. We put 
\HAhb{\HAvp_\HAal 
     :=\psi_\HAal\big/\sum_{\tilde\HAal\in\HAsA}\psi_{\tilde\HAal}}. 
Assumptions (i) and~(ii) guarantee that 
\HAhb{\{\,\HAvp_\HAal\ ;\ \HAal\in\HAsA\,\}} is a smooth partition of unity 
on~$M$ subordinate to the open cover 
\HAhb{\{\,V_\HAal\ ;\ \HAal\in\HAsA\,\}} of~$M$. 

\par 
We put 
\HAhb{\rho:=\sum_\HAal\HAvp_\HAal\rho_\HAal}. Then, given 
\HAhb{\HAal\in\HAsA} and 
\HAhb{x\in V_\HAal}, we infer from~(iv) 
\HAbeq \label{HAP.rph} 
\HAbal 
\rho(x)=\sum_{V_\HAba\cap V_\HAal\neq\HAes}\HAvp_\HAba(x)\rho_\HAba(x) 
&\sim\rho_\HAal(x)\sum_{V_\HAba\cap V_\HAal\neq\HAes}\HAvp_\HAba(x)\\ 
&=\rho_\HAal(x)\sum_\HAba\HAvp_\HAba(x)=\rho_\HAal(x)\;. 
\HAeal 
\HAnpb 
\HAeeq 
This proves~(\ref{HAP.rho}). 

\par 
(3) 
By (iii) 
$$ 
\HAka_*(g/\rho_\HAal^2)\sim g_m 
\HAqb \|\HAka_*(g/\rho_\HAal^2)\|_{k,\HAiy}\leq c(k) 
$$ 
for 
\HAhb{\HAka\in\HAgK_{\HAal,S_\HAal}}, 
\ \HAhb{ \HAal\in\HAsA}, and  
\HAhb{k\in\HABN}. We deduce from~(\ref{HAP.rph}) 
\HAbeq \label{HAP.kgg} 
\HAka_*(g/\rho^2) 
=\HAka_*g/\HAka_*\rho^2 
\sim\HAka_*g/\HAka_*\rho_\HAal^2 
=\HAka_*(g/\rho_\HAal^2) 
\sim g_m 
\HAeeq 
for 
\HAhb{\HAka\in\HAgK_{\HAal,S_\HAal}} and 
\HAhb{\HAal\in\HAsA}, that is, for 
\HAhb{\HAka\in\HAgK}. The definition of~$\rho$ implies 
$$ 
\HAka_*\rho 
=\sum_\HAal(\HAka_*\HAvp_\HAal)(\HAka\circ\HAka_\HAal^{-1})_* 
 \HAka_{\HAal*}\rho_\HAal 
\HAqa \HAka\in\HAgK\;. 
$$ 
From this, (iii), the chain rule, and the uniform regularity 
of~$\HAgK$ we deduce the estimate 
\HAhb{\|\HAka_*\rho\|_{k,\HAiy}\leq c(k)} for 
\HAhb{\HAka\in\HAgK} and 
\HAhb{k\in\HABN}. Consequently, we infer from the chain rule 
and~(\ref{HAP.kgg}) 
$$ 
\|\HAka_*(g/\rho^2)\|_{k,\HAiy}\leq c(k) 
\HAqa \HAka\in\HAgK 
\HAqb k\in\HABN\;. 
\HAnpb 
$$ 
This proves the last part of the assertion. 
\qed 
\end{proof} 
The following (almost trivial) lemma shows that the class of singular 
manifolds is invariant under Riemannian isometries. 
\begin{lemma}\label{HAlem-P.f} 
Let 
\HAhb{f\HAsco\tilde M\HAra M} be a diffeomorphism of manifolds. Suppose 
$g$~is a Riemannian metric for~$M$ and $\rho$~is a singularity function 
for~$\HAMg$ on 
\HAhb{S\HAis M}. Then $f^*\rho$~is a singularity function for 
$(\tilde M,f^*g)$ on~$f^{-1}(S)$. 
\end{lemma}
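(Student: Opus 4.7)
The plan is to pull the entire singularity structure on $M$ back through the diffeomorphism~$f$. Let $\HArgK$ be a singularity datum for $\HAMg$ on~$S$ (which exists by hypothesis, as the singularity function~$\rho$ comes with one). Set $\tilde S:=f^{-1}(S)$, $\tilde g:=f^*g$, $\tilde\rho:=f^*\rho=\rho\circ f$, and
$$
\tilde\HAgK:=\{\,\HAka\circ f\ ;\ \HAka\in\HAgK\,\}\;,
$$
which is clearly an atlas for~$\tilde M$. For every $\tilde\HAka=\HAka\circ f\in\tilde\HAgK$ one has $U_{\tilde\HAka}=f^{-1}(U_{\HAcoU\HAka})$ and $\tilde\HAka(U_{\tilde\HAka})=\HAka(U_{\HAcoU\HAka})$, so normalization and the shape of the chart image are preserved, and $\tilde\HAka\in\tilde\HAgK_{\tilde S}$ iff $\HAka\in\HAgK_S$. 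Moreover, the shrinking family for $\tilde\HAgK_{\tilde S}$ is the $f^{-1}$-image of the shrinking family for $\HAgK_S$, and $f$~is a bijection, so shrinkability and finite multiplicity transfer from $\HAgK_S$ to~$\tilde\HAgK_{\tilde S}$.

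The decisive observation is that composition of charts with~$f$ does not alter the local representations of the pulled-back tensors. Indeed, since $(\HAka\circ f)^{-1}=f^{-1}\circ\HAka^{-1}$ and $f^*$ commutes with restriction, for any function or tensor field~$T$ on~$M$ one computes directly
$$
(\HAka\circ f)_*(f^*T)=\HAka_*\bigl(f_*(f^*T)\bigr)=\HAka_*T\;.
$$
Applied to $\tilde g=f^*g$ and $\tilde\rho=f^*\rho$ this gives $\tilde\HAka_*\tilde g=\HAka_*g$ and $\tilde\HAka_*\tilde\rho=\HAka_*\rho$ for $\tilde\HAka=\HAka\circ f$. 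Similarly, chart transitions are unchanged, since
$$
(\HAka_1\circ f)\circ(\HAka_2\circ f)^{-1}=\HAka_1\circ\HAka_2^{-1}\;.
$$

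Consequently, the uniform regularity conditions~(\ref{HAN.KS}) for $\tilde\HAgK_{\tilde S}$ and the singularity datum conditions~(\ref{HAN.sd})(iii)--(vi) for $(\tilde\rho,\tilde\HAgK)$ on~$\tilde S$ coincide \emph{verbatim} with the corresponding conditions already assumed for $(\rho,\HAgK)$ on~$S$. Hence $(\tilde\rho,\tilde\HAgK)$ is a singularity datum for $(\tilde M,f^*g)$ on~$f^{-1}(S)$, and so $f^*\rho$ is a singularity function for it. There is no real obstacle here: the only thing to say is that push-forward through a diffeomorphism transports the whole chart-wise structure intact, which is precisely why the author calls the lemma ``almost trivial''.
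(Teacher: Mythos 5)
Your proposal is correct and follows essentially the same route as the paper: pull back the atlas by $f$, observe that $(\HAka\circ f)_*(f^*T)=\HAka_*T$ for the metric and the singularity function and that chart transitions $(\HAka_1\circ f)\circ(\HAka_2\circ f)^{-1}=\HAka_1\circ\HAka_2^{-1}$ are unchanged, so all defining conditions transfer verbatim. (The only cosmetic point: the definition of a singularity function requires that $(M,\HAgK,g/\rho^2)$ be uniformly regular on~$S$, not literally that a singularity datum exist; but the same chart-wise identities applied to $g/\rho^2$ settle this directly, exactly as in the paper.)
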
 
\begin{proof} 
Let $\HAgK$ be an atlas which is uniformly regular on~$S$. It is easily 
verified that 
\HAhb{f^*\HAgK:=\{\,f^*\HAka\ ;\ \HAka\in\HAgK\}} is an atlas 
for~$\tilde M$ which is uniformly regular on~$f^{-1}(S)$. Note 
$$ 
(f^*\HAka)_*f^*\rho=(\rho\circ f)\circ(\HAka\circ f)^{-1} 
=\rho\circ\HAka=\HAka_*\rho 
$$ 
and 
$$ 
(f^*\HAka)_*(f^*g)=(\HAka\circ f)_*(f^{-1})_*g 
=\bigl((\HAka\circ f)\circ f^{-1}\bigr)_*g=\HAka_*g 
$$  
for 
\HAhb{\HAka\in\HAgK}. From this it is obvious that conditions~(\ref{HAN.sd}) 
carry over from $\rho$, $\HAgK$, and~$g$ to $f^*\rho$, 
$f^*\HAgK$, and~$f^*g$. 
\qed 
\end{proof} 
Suppose 
\HAhb{\HApl M\neq\HAes} and let 
\HAhb{\HAthia\HAsco\HApl M\HAhr M} be the natural embedding. Let $g$ be 
a Riemannian metric for~$M$. Then 
\HAhb{\HAthg:=\HAthia^*g} is the Riemannian metric for~$\HApl M$ 
induced by~$g$. Given a local chart~$\HAka$ for~$M$ with 
\HAhb{\HApl U_{\HAcoU\HAka}=U_{\HAcoU\HAka}\cap\HApl M\neq\HAes}, we set 
\HAhb{U_{\HAithka}:=\HApl U_{\HAcoU\HAka}} and 
\HAhb{\HAthka 
     :=\HAia_0\circ(\HAia^*\HAka)\HAsco U_{\HAithka}\HAra\HABR^{m-1}}, 
where 
\HAhb{\HAia_0\HAsco\{0\}\times\HABR^{m-1}\HAra\HABR^{m-1}}, 
\ \HAhb{(0,x')\HAmt x'}. Moreover, 
\HAhb{\HAthrho:=\HAia^*\rho=\rho\HAsn\HApl M} for 
\HAhb{\rho\HAsco M\HAra\HABR}. 
\begin{lemma}\label{HAlem-P.dM} 
Let $\HAgK$ be an atlas for~$M$  which is uniformly regular on~$S$. Then 
$$ 
\HAthgK:=\{\,\HAthka\ ;\ \HAka\in\HAgK_{\HApl M}\,\} 
$$  
is one for~$\HApl M$ and it is uniformly regular on 
\HAhb{\HApl M\cap S}. If $\HArgK$~is a singularity datum for~$\HAMg$ 
on~$S$, then 
$(\HAthrho,\HAthgK)$ is one for $(\HApl M,\HAthg)$ on 
\HAhb{\HApl M\cap S}. 
\end{lemma}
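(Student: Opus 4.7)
The plan is to verify each of the defining conditions in (\ref{HAN.KS}) for $\HAthgK$, and each of the conditions in (\ref{HAN.sd}) for $(\HAthrho,\HAthgK)$, by restriction to the boundary hyperplane $\{0\}\times\HABR^{m-1}\HAis\HABH^m$. The organizing observation is this: if $\HAka\in\HAgK_{\HApl M}$, then the normalization convention forces $\HAka(U_{\HAcoU\HAka})=Q^m\cap\HABH^m$, hence $\HAthka(U_{\HAithka})=Q^{m-1}$. Thus $\HAthka$ is automatically a normalized chart for the $(m-1)$-dimensional manifold $\HApl M$, and $\HAthka^{-1}(0)=\HAka^{-1}(0)$ as points of $\HApl M$.

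I would first dispose of the uniform regularity of $\HAthgK$ on $\HApl M\cap S$. For shrinkability, pick $r\in(0,1)$ such that $\{\,\HAka^{-1}(rQ_\HAka^m)\ ;\ \HAka\in\HAgK_S\,\}$ covers $S$; then any $p\in\HApl M\cap S$ lies in some such set, which forces $\HAka\in\HAgK_{\HApl M}$ (since $\HAka(p)\in\{0\}\times rQ^{m-1}$) and therefore $p\in\HAthka^{-1}(rQ^{m-1})$. Finite multiplicity on $\HApl M\cap S$ is inherited from that of $\HAgK_S$, and the coordinate changes $\HAthka\circ\tilde{\HAthka}^{-1}$ are literally the restrictions of $\HAka\circ\tilde{\HAka}^{-1}$ to the boundary hyperplane, composed with $\HAia_0$ and $\HAia_0^{-1}$; hence the $BC^k$-bound in~(\ref{HAN.KS})(ii) transfers directly. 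This yields the first assertion.

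For the singularity datum, the identity $\rho_{\HAthka}=\rho_\HAka$ is immediate, and $\HAthka_*\HAthrho(x')=\HAka_*\rho(0,x')$, so the derivative estimate~(\ref{HAN.sd})(iii) and the equivalence~(iv) for $\HAthrho$ on $U_{\HAithka}$ are inherited from the corresponding statements for $\rho$ on $U_{\HAcoU\HAka}$. The only mildly delicate step is transferring the metric condition~(v): writing $\HAka_*g$ in components as $(h_{ij})_{i,j=1}^m$, the local representation $\HAthka_*\HAthg$ is the principal submatrix $\bigl(h_{ij}(0,\cdot)\bigr)_{i,j=2}^m$, and I need $\HAthka_*\HAthg\sim\rho_{\HAthka}^2 g_{m-1}$. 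This follows from the squeeze $c^{-1}\rho_\HAka^2 g_m\leq\HAka_*g\leq c\rho_\HAka^2 g_m$ by testing on vectors of the form $(0,\xi')\in\HABR^m$: the resulting two-sided bound on $\sum_{i,j=2}^m h_{ij}(0,\cdot)\xi^i\xi^j$ by $c^{\pm1}\rho_\HAka^2|\xi'|^2$ is exactly the claim. The derivative bound~(vi) is inherited from sup-norm bounds under restriction. Together these verify that $(\HAthrho,\HAthgK)$ is a singularity datum for $(\HApl M,\HAthg)$ on $\HApl M\cap S$.
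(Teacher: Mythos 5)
Your verification is correct and is exactly the routine boundary-restriction argument that the paper declares ``Obvious'' and omits: normalization forces $\HAka(U_{\HAcoU\HAka})=Q^m\cap\HABH^m$, all conditions of (\ref{HAN.KS}) and (\ref{HAN.sd}) pass to the slice $\{0\}\times Q^{m-1}$, and the metric equivalence~(v) restricts to the principal submatrix by testing on tangential vectors. Nothing is missing.
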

\begin{proof} 
Obvious. 
\qed 
\end{proof} 
In this lemma it is implicitly assumed that 
\HAhb{m\geq2}. However, calling---in abuse of language---every 
\hbox{$0$-dimensional} manifold uniformly regular, Lemma~\ref{HAlem-P.dM} 
holds for 
\HAhb{m=1} also, employing obvious interpretations and adaptions. 
\section{Uniformly Regular Riemannian Manifolds}\label{HAsec-U}%
On the basis of the preceding considerations we now provide proofs for 
some of the claims made in Example~\ref{HAexa-I.ex}.  

\par 
Let $\HAMg$ be a Riemannian manifold. It has \emph{bounded geometry} if it 
has an empty boundary, is complete, has a positive injectivity radius, and 
all covariant derivatives of the curvature tensor are bounded. 
\begin{theorem}\label{HAthm-U.B} 
If $\HAMg$ has bounded geometry, then it is uniformly regular. 
\end{theorem}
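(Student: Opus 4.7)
The plan is to use geodesic normal coordinate charts. Because $\HAMg$ has positive injectivity radius $r_0>0$, for each $p\in M$ the exponential map $\exp_p$ restricted to the Euclidean ball $B(0,r_0)\subset T_pM$ is a diffeomorphism onto its image. Fix an orthonormal identification $T_pM\cong\HABR^m$ at every $p$; composing $\exp_p^{-1}$ with a fixed affine rescaling $B(0,r_0)\to B(0,1)$ and then with a fixed diffeomorphism $B(0,1)\to Q^m$ produces a normalized chart $\HAka_p$ centered at $p$, whose domain $U_{\HAka_p}$ is a metric ball of fixed radius around $p$. (Here the boundary $\HApl M=\HAes$ hypothesis of bounded geometry is what allows the ``spherical'' rather than ``half-space'' normalization.)

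First I would assemble the atlas. Pick a maximal $r$-separated subset $\{p_i\}\subset M$ with $0<r<r_0/4$; completeness of $M$ then forces the balls of radius $2r$ about the $p_i$ to cover~$M$. Because bounded covariant derivatives of the curvature tensor in particular bound the Ricci curvature from below, Bishop--Gromov volume comparison gives a uniform upper bound on the number of points $p_j$ that can lie within a fixed distance of any $p_i$, hence on the number of $\HAka_{p_j}$ whose domains can meet $U_{\HAka_{p_i}}$. Thus $\HAgK:=\{\HAka_{p_i}\}$ has finite multiplicity, and shrinkability is automatic because the inner, smaller balls already cover~$M$.

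The heart of the proof is the verification of~(\ref{HAN.KS})(ii) and~(\ref{HAN.Rr}). In normal coordinates at $p$ the metric components $\HAka_{p*}g$ are obtained by integrating a Jacobi equation driven by the curvature tensor along rays; the classical estimates (as in the work of Eichhorn or Shubin on manifolds of bounded geometry) then yield bounds on every derivative of $\HAka_{p*}g$ depending only on $r_0$ and on $\sup_M|\HAna^j R|_g$ for $j$ up to the required order, both of which are finite by hypothesis. This gives $\HAka_{p*}g\sim g_m$ on $Q^m$ and $\|\HAka_{p*}g\|_{k,\HAiy}\leq c(k)$, uniformly in~$p$. For a coordinate change one writes $\HAka_q\circ\HAka_p^{-1}$ as a rescaled version of $\exp_q^{-1}\circ\exp_p$; the Jacobi-field representation of $d\exp_p$, combined with the same curvature bounds, gives uniform $C^k$ estimates on $\exp_p$ and $\exp_p^{-1}$ whenever the basepoints lie at bounded distance. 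This forces $\|\HAka_q\circ\HAka_p^{-1}\|_{k,\HAiy}\leq c(k)$ uniformly over overlapping pairs.

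The main obstacle is this chain of uniform higher-order estimates relating covariant curvature bounds to $C^k$-bounds on metric components and on transition maps in normal coordinates; this is the technical core of bounded-geometry analysis but is by now standard. Once it is in place, (\ref{HAN.KS}) and~(\ref{HAN.Rr}) are satisfied by $\HAgK$, and $\HAMg$ is uniformly regular in the sense of Section~\ref{HAsec-N}.
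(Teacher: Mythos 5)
Your argument is essentially the paper's own proof written out: the paper disposes of this theorem by citing Aubin's Lemma~2.2.6 and Eichhorn (with Shubin), and those references establish precisely the normal-coordinate construction you sketch --- uniform $C^k$ control of $\HAka_{p*}g$ and of the transition maps $\exp_q^{-1}\circ\exp_p$ from the injectivity radius bound and the bounds on $|\HAna^jR|_g$, plus a uniformly locally finite cover by geodesic balls. Since you defer the same technical core to the same literature, the proposal is correct and takes the same route.
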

\begin{proof}
This follows from Th.~Aubin~\cite[Lemma~2.2.6]{Aub82a} and 
J.~Eichhorn~\cite{Eich91a} (also see M.A.~Shubin~\cite{Shu92a}). 
\qed
\end{proof} 
A~uniformly regular Riemannian manifold without boundary is complete 
(cf.\ M.~Disconzi, Y.~Shao, and G.~Simonett~\cite{DSS14a}). 
It has been shown by R.E.~Greene~\cite{Gree78a} that every manifold~$M$ 
without boundary admits a Riemannian metric~$g$ such that $\HAMg$~has 
bounded geometry. However, in view of applications to differential equations 
which we have in mind, this 
result is of restricted interest, in general. Indeed, the metric is 
then given a~priori and is closely related to the differential 
operators under consideration. 

\par 
Although Theorem~\ref{HAthm-U.B} is very general it has the disadvantage that 
it applies only to manifolds without boundary. The following results do not 
require $\HApl M$ to be empty. 
\begin{lemma}\label{HAlem-U.C} 
Let $\HAMg$ be a Riemannian manifold and suppose 
\HAhb{S\HAis M} is compact. Then there exists  a unique uniformly regular 
structure for~$M$ on~$S$, and $\HAMg$~is uniformly regular on~$S$. 
\end{lemma}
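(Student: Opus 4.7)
The approach is to exploit the compactness of $S$ to reduce both existence and uniqueness to statements about finite collections of charts, where the uniform bounds in (\ref{HAN.KS}) and (\ref{HAN.Rr}) become automatic from smoothness and continuity.

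For existence, at each $p\in S$ I would choose a normalized chart $\HAka_p$ at $p$ (interior if $p\in\HAci M$, boundary if $p\in\HApl M$) that extends diffeomorphically onto a slightly larger image, say $(1+\HAve)Q^m$, respectively $(1+\HAve)Q^m\cap\HABH^m$, for some fixed $\HAve>0$. Such an extension ensures that $U_{\HAka_p}$ has compact closure inside the enlarged chart domain. The shrunken patches $\HAka_p^{-1}(rQ^m_{\HAka_p})$ cover $S$, so compactness yields a finite subcover $\{\HAka_1,\ldots,\HAka_n\}$, which I extend to an atlas $\HAgK$ of $M$ by adjoining smooth charts covering $M$ outside a compact neighborhood of $S$ (chosen so the adjoined charts avoid $S$ entirely). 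Then $\HAgK_S=\{\HAka_1,\ldots,\HAka_n\}$ is finite: finite multiplicity and shrinkability are immediate, and the derivative bounds (\ref{HAN.KS})(ii) and (\ref{HAN.Rr}) hold because each transition map and each $(\HAka_i)_*g$ is smooth on an open set whose closure lies inside a larger domain of smoothness.

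For uniqueness, given two atlases $\HAgK$ and $\tilde\HAgK$ that are uniformly regular on $S$, I would show $\HAgK\HAapproxS\tilde\HAgK$ by comparing each to a common finite reference atlas built as in the existence argument. Concretely, I would extract by compactness finite sub-families from $\HAgK_S$ and $\tilde\HAgK_S$ whose shrunken patches still cover $S$, and verify that each full atlas is $\HAapproxS$-equivalent to its finite reduction (augmented by charts away from $S$ so as to remain an atlas of $M$): the cardinality bound (\ref{HAN.Keq})(i) reduces to the finite multiplicity of $\HAgK_S$ combined with finiteness of the reduction, while the derivative bound (\ref{HAN.Keq})(ii) is inherited directly from the hypothesis (\ref{HAN.KS})(ii) applied to each atlas individually.

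The main obstacle is the last step of uniqueness: the cross-atlas transitions $\tilde\HAka_j\circ\HAka_i^{-1}$ between the two finite reductions are not controlled by the uniform regularity hypotheses on $\HAgK$ and $\tilde\HAgK$ separately. To overcome this I would refine both finite subcovers to consist of charts extending to enlarged domains, as in the existence construction; then every cross-atlas transition is a smooth map whose overlap domain has compact closure inside an enlarged chart domain, giving uniform $C^k$-bounds. Transitivity of $\HAapproxS$ then yields $\HAgK\HAapproxS\tilde\HAgK$, proving uniqueness of the structure.
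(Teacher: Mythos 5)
Your proposal follows the paper's argument in both parts: for existence, finitely many normalized charts centered at points of $S$ whose domains have compact closure inside larger chart domains (so that all pairwise transition maps and local metric representations are smooth up to the closure, hence bounded with all derivatives), completed by an arbitrary atlas of the open submanifold $M\HAssm S$; for uniqueness, reduction of any atlas that is uniformly regular on~$S$ to a finite subfamily over~$S$ followed by comparison with the constructed reference atlas. The only difference is that you make explicit why the cross-atlas transition maps between the two finite families satisfy the $C^k$-bounds of (\ref{HAN.Keq})(ii) --- by refining to charts extending to enlarged domains --- whereas the paper disposes of this with ``since $\HAgK_S$ and $\HAgM_S$ are both finite''; your extra care is warranted but does not change the route.
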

\begin{proof}
(1) 
For each 
\HAhb{p\in M} there exists a local chart~$\tilde\HAka_p$ of~$M$ with 
\HAhb{p\in U_{\HAcoU\tilde\HAka}}. We set 
\HAhb{W_{\HAcoW p}:=Q^m} if 
\HAhb{p\in\HAci M}, and 
\HAhb{W_{\HAcoW p}:=Q^m\cap\HABH^m} for 
\HAhb{p\in\HApl M}. Then we can fix 
\HAhb{\HAda_p>0} such that 
\HAhb{\HAol{\tilde\HAka_p(p)+\HAda_pW_{\HAcoW p}} 
     \HAis\HAka_p(U_{\HAcoU\HAka_p})}. From this 
it follows that, by translation and dilation, we find for each pair 
\HAhb{p,q\in M} local charts $\HAka_p$ and~$\HAka_q$, normalized at $p$ 
and~$q$, respectively, such that 
\HAhb{\|\HAka_p\circ\HAka_q^{-1}\|_{k,\HAiy}\leq c(p,q,k)} for 
\HAhb{k\in\HABN}. 

\par 
By the compactness of~$S$ we can determine a finite subset~$\HASa$ of~$S$ 
such that 
\HAhb{\bigl\{\,\HAka_p^{-1}(2^{-1}Q_{\HAka_p}^m)\ ;\ p\in\HASa\,\}} is an 
open cover of~$S$. Let $\HAgN$ be an atlas for the open submanifold 
\HAhb{M\HAssm S} of~$M$. Then 
$$ 
\HAgK:=\{\,\HAka_p\ ;\ p\in\HASa\,\}\cup\HAgN 
$$ 
is an atlas for~$M$, and 
\HAhb{\HAgK_S=\{\,\HAka_p\ ;\ p\in\HASa\,\}}. Since $\HASa$~is finite 
$\HAgK$~is uniformly regular on~$S$ and (cf.~(\ref{HAN.kg})) 
condition~(\ref{HAN.Rr}) is satisfied. 

\par 
(2) Let $\HAgL$ be an atlas for ~$M$ which is uniformly regular on~$S$. 
By the compactness of~$S$ we find a subatlas~$\HAgM$ of~$\HAgL$ such that 
$\HAgM_S$~is a finite subset of~$\HAgL_S$. It is obvious that $\HAgM$~can 
be chosen such that 
\HAhb{\smash{\HAgM\HAapproxS\HAgL}\HAsmtB}. Since $\HAgK_S$ and~$\HAgM_S$ 
are both finite, 
\HAhb{\smash{\HAgM\HAapproxS\HAgK}\HAsmtB}. Consequently, 
\HAhb{\smash{\HAgL\HAapproxS\HAgK}\HAsmtB}. This proves the uniqueness 
assertion. 
\qed
\end{proof}
\begin{corollary}\label{HAcor-U.C} 
Every compact Riemannian manifold is uniformly regular. 
\end{corollary}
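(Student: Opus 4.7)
The plan is to apply Lemma~\ref{HAlem-U.C} with the choice $S:=M$. Since by hypothesis $M$ is compact, the set $S=M$ is a compact subset of~$M$, so the lemma applies and yields that $\HAMg$ is uniformly regular on~$M$. By the convention fixed at the end of Section~\ref{HAsec-N}, where the qualifier `on~$S$' and the symbol~$S$ are dropped whenever $S=M$, this means precisely that $\HAMg$ is uniformly regular in the unqualified (global) sense. That is the entire content of the corollary.

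No further argument is needed: the construction in the proof of Lemma~\ref{HAlem-U.C}---fixing at each point a normalized chart by translation and dilation of an arbitrary local chart, then extracting a finite subcover by compactness of~$S$---already produces a finite atlas $\HAgK_S$ whose coordinate changes automatically satisfy $\|\tilde\HAka\circ\HAka^{-1}\|_{k,\infty}\leq c(p,q,k)$ with finitely many pairs $(p,q)$, hence a uniform bound $c(k)$, and for which the local metric representations $\HAka_*g$ satisfy (\ref{HAN.Rr}) by (\ref{HAN.kg}) and finiteness. With $S=M$ this atlas covers all of~$M$ (no auxiliary atlas~$\HAgN$ for $M\setminus S$ is required), and the shrinkability and finite multiplicity are automatic because $\HAgK_M$ is a finite collection of normalized charts.

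The main (and only) ``obstacle'' is purely bookkeeping: checking that the definition of uniform regularity (Section~\ref{HAsec-N}) matches the statement obtained from Lemma~\ref{HAlem-U.C} when $S=M$. This is immediate from the convention recorded there. Thus the corollary reduces to a one-line invocation of Lemma~\ref{HAlem-U.C}.
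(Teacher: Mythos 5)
Your proof is correct and is exactly the paper's (implicit) argument: the corollary is stated without a separate proof precisely because it is the case $S=M$ of Lemma~\ref{HAlem-U.C}, combined with the convention from Section~\ref{HAsec-N} that the qualifier ``on~$S$'' is dropped when $S=M$. Your additional remarks about the finite atlas are accurate but not needed beyond the one-line invocation.
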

The next theorem concerns submanifolds of codimension~$0$ of uniformly 
regular Riemannian manifolds. 
\begin{theorem}\label{HAthm-U.S} 
Let $\HANg$ be an \hbox{$m$-dimensional} uniformly regular Riemannian 
manifold and $\HAMg$ an \hbox{$m$-dimensional} Riemannian submanifold with 
compact boundary. Then $\HAMg$ is uniformly regular. 
\end{theorem}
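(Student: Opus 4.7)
The plan is to cover $M$ by a compact collar of $\HApl M$ and by a region uniformly bounded away from $\HApl M$, construct a singularity datum with $\rho\equiv\HAmf{1}$ on each piece, and patch them via Lemma~\ref{HAlem-P.P}. Concretely, since $\HApl M$ is compact, I would fix $\HAve>0$ small enough that \HAhb{S_1:=\{p\in M\,;\,d_M(p,\HApl M)\leq 2\HAve\}} is compact, and set \HAhb{V_1:=\{d_M(\cdot,\HApl M)<3\HAve\}}, \HAhb{V_0:=\{d_M(\cdot,\HApl M)>\HAve\}}, \HAhb{S_0:=M\HAssm V_1}. Then $\{V_0,V_1\}$ is an open cover of $M$, $\{S_0,S_1\}$ is a covering with \HAhb{S_i\HAis V_i}, and \HAhb{d_{\bar V_0}(S_0,M\HAssm V_0)\geq\HAve}.

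On $V_1$: Lemma~\ref{HAlem-U.C} applied to the compact set $S_1$ yields an atlas $\HAgK_1$ for $M$ such that $(\HAmf{1},\HAgK_1)$ is a singularity datum for $\HAMg$ on~$S_1$; by the construction in its proof, $\HAgK_{1,S_1}$ can be taken to be finite. On $V_0$: every point of $S_0$ lies in $\HAci M$ and, since $M\HAis N$ has codimension~$0$, $V_0$ is an open submanifold of $N$ lying at distance $\geq\HAve$ from $N\HAssm V_0$. I would take a uniformly regular atlas $\HAgL$ for $\HANg$ and form the refinement $\HAgM(\HAda,\HAgL)$ of~(\ref{HAP.M}); by~(\ref{HAP.Mu}) it is uniformly regular, and since $\HAlda_*g\sim g_m$ uniformly in $\HAlda\in\HAgL$ its charts have Euclidean diameter of order $\HAda$. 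Choosing $\HAda$ sufficiently small (applying Lemma~\ref{HAlem-P.V} in $N$ to the open set $V_0$ and the subset $S_0$) every chart $\umu\in\HAgM(\HAda,\HAgL)$ meeting $S_0$ satisfies \HAhb{U_\umu\HAis V_0\HAis M}; the sub-family $\HAgK_0$ of such $\umu$ is an atlas for $V_0$, and~(\ref{HAP.mg})--(\ref{HAP.dag}) give $\umu_*g\sim g_m$ with uniformly bounded derivatives, so $(\HAmf{1},\HAgK_0)$ is a singularity datum for $(V_0,g)$ on~$S_0$.

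I would then invoke Lemma~\ref{HAlem-P.P} with $\HAsA:=\{0,1\}$, $\rho_0=\rho_1:=\HAmf{1}$, and the data above. Conditions (i)--(iv) are immediate; condition~(v)---uniform $BC^k$-bounds on the coordinate changes between $\HAka\in\HAgK_{1,S_1}$ and $\umu\in\HAgK_{0,S_0}$---is the only genuine technical point and I expect it to be the main obstacle. It reduces, however, to compactness: because $\HAgK_{1,S_1}$ is finite, any such interaction takes place in a fixed bounded neighborhood of $S_1$, where the finitely many coordinate changes $\HAka\circ\HAlda^{-1}$ (for $\HAka\in\HAgK_1$ and $\HAlda\in\HAgL$) are smooth on compact sets; composing with the translation--dilation maps of~(\ref{HAP.l}), whose derivatives are $\HAda\HAsdot 1_m$, produces the needed $k$-th order bounds. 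Lemma~\ref{HAlem-P.P} then outputs a singularity datum $(\HAmf{1},\HAgK)$ for $\HAMg$, which is exactly the statement that $\HAMg$ is uniformly regular.
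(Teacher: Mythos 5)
Your proposal is correct and follows essentially the same route as the paper: decompose $M$ into a relatively compact collar of $\HApl M$ (handled by Lemma~\ref{HAlem-U.C}) and a piece uniformly bounded away from $\HApl M$ (handled by Lemma~\ref{HAlem-P.V} applied to the ambient uniformly regular atlas of $N$), then patch via Lemma~\ref{HAlem-P.P}, verifying condition~(v) by finiteness of the charts meeting the compact overlap. The only cosmetic difference is that you cut $M$ using sublevel sets of $d_M(\cdot,\HApl M)$ where the paper uses nested relatively compact open neighborhoods $W_1\HAis\bar W_1\HAis W_2$ of $\HApl M$.
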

\begin{proof}
By the preceding corollary we can assume 
\HAhb{\HApl M\neq\HAes}. 

\par 
Since $M$~is locally compact and $\HApl M$~is compact 
there exist relatively compact open neighborhoods $W_1$ and~$W_2$ 
of~$\HApl M$ in~$M$ with 
\HAhb{W_1\HAis\bar W_1\HAis W_2}. We set 
\HAhb{V_1:=W_2} and 
\HAhb{S_1:=\bar W_1} as well as 
\HAhb{V_2:=\HAci M} and 
\HAhb{S_2:=M\HAssm W_1}. Then $V_i$~is open in~$M$, 
\ \HAhb{S_i\HAis V_i}, and 
\HAhb{S_1\cup S_2=M}. 

\par 
The compactness of~$S_1$ in~$M$ and 
\HAhb{\D_M(S_1,M\HAssm W_2)>0} imply, due to Lemmas \ref{HAlem-P.V} 
and~\ref{HAlem-U.C}, that there exists an atlas~$\HAgK_1$ for~$M$ such that 
$(\HAmf{1},\HAgK_1)$ is a singularity datum for~$V_1$ on~$S_1$. 

\par 
Note that 
\HAhb{\D_M(S_2,\HApl M)>0}. Hence Lemma~\ref{HAlem-P.V} and the uniform 
regularity of~$\HANg$ imply the existence of an atlas~$\HAgK_2$ 
for~$\HAci M$ such that 
$(\HAmf{1},\HAgK_2)$ is a singularity datum for~$V_2$ on~$S_2$. 

\par 
Since 
\HAhb{S:=S_1\cap S_2=\bar W_1\HAssm W_1} is compact we can assume that 
$\HAgK_{1,S}$ and~$\HAgK_{2,S}$ are finite. Hence it is obvious that 
condition~(v) of Lemma~\ref{HAlem-P.P} is satisfied. Thus that lemma 
guarantees the validity of the claim. 
\qed
\end{proof}
\begin{corollary}\label{HAcor-U.S} 
Let $M$ be an \hbox{$m$-dimensional} Euclidean submanifold of~$\HABR^m$ with 
compact boundary. Then $M$~is a uniformly regular Riemannian manifold.
\end{corollary}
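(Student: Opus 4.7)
The plan is to derive this as an immediate specialization of Theorem~\ref{HAthm-U.S}. I would take $N := \HABR^m$ with the Euclidean metric $g_m = |\D x|^2$, which plays the role of the ambient uniformly regular Riemannian manifold. The fact that $(\HABR^m, g_m)$ is uniformly regular is exactly assertion (\ref{HAP.RH}), proved in Section~\ref{HAsec-P} via the explicit atlas $\HAgL(\HAda,\HABR^m)$ of translated and dilated cubes.

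By hypothesis, $M$ is an $m$-dimensional submanifold of $\HABR^m$ endowed with the induced (Euclidean) metric $g := \HAia^* g_m$, so by the definition of Riemannian submanifold recalled in Section~\ref{HAsec-N} we have $\HAMg \HAhr \HARmgm$. Since the submanifold has the same dimension as the ambient manifold, it is of codimension zero. The boundary $\HApl M$ is assumed compact. All hypotheses of Theorem~\ref{HAthm-U.S} are therefore met, and the conclusion---that $\HAMg$ is uniformly regular---follows at once.

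There is essentially no obstacle here: the whole point of stating this as a corollary rather than an independent result is that the substantive work (handling the collar near the boundary via Lemma~\ref{HAlem-P.P}, combined with the compactness argument of Lemma~\ref{HAlem-U.C}) has already been carried out in the proof of Theorem~\ref{HAthm-U.S}, while the uniform regularity of the ambient space $\HARmgm$ is the content of (\ref{HAP.RH}). The only thing one might wish to remark on is that the $m$-dimensionality assumption is essential: without it, $M$ would be a lower-dimensional submanifold, to which Theorem~\ref{HAthm-U.S} (which requires matching dimensions) would not apply directly.
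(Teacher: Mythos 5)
Your proposal is correct and coincides with the paper's own proof, which likewise sets $N:=\HABR^m$, invokes (\ref{HAP.RH}) for the uniform regularity of the ambient Euclidean space, and applies Theorem~\ref{HAthm-U.S}. The additional remarks on codimension zero and the induced metric are accurate but not needed beyond what the paper records.
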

\begin{proof}
Set 
\HAhb{N:=\HABR^m} and recall~(\ref{HAP.RH}). 
\qed
\end{proof}
\section{Characteristics}\label{HAsec-C}%
We write 
\HAhb{J_0:=(0,1]}, 
\ \HAhb{J_\HAiy:=[1,\HAiy)}, and assume throughout that 
\HAhb{J\in\{J_0,J_\HAiy\}}. A~sub\-in\-ter\-val~$I$ of~$J$ is \emph{cofinal} 
if 
\HAhb{1\notin I}, and 
\HAhb{J\HAssm\HAci I} is a compact interval. 

\par 
We denote by~$\HAcR(J)$ the set of all 
\HAhb{R\in C^\HAiy\bigl(J,(0,\HAiy)\bigr)} satisfying 
\HAhb{R(1)=1}, such that 
\HAhb{R(\HAom):=\lim_{t\HAra\HAom}R(t)} exists in~$[0,\HAiy]$ if 
\HAhb{J=J_\HAom}. Then we write 
\HAhb{R\in\HAcC(J)} if 
\HAbeq \label{HAC.C} 
 \HAbal 
 \\ 
 \noalign{\vskip-5\jot}  
 \rm{(i)} \qquad    &R\in\HAcR(J)\text{ and }R(\HAiy)=0\text{ if } 
                     J=J_\HAiy\;;\\
 \noalign{\vskip-1\jot} 
 \rm{(ii)} \qquad   &\int_J\,\D t\big/R(t)=\HAiy\;;\\     
 \noalign{\vskip-1\jot} 
 \rm{(iii)} \qquad  &\|\HApl^kR\|_\HAiy<\HAiy\;, 
                     \ \ k\geq1\;.
 \HAeal 
\HAnpb 
\HAeeq 
The elements of~$\HAcC(J)$ are called \emph{cusp characteristics} on~$J$. 

\par 
On~$J_\HAiy$ we introduce, in addition, the set~$\HAcF(J_\HAiy)$ of 
 \emph{funnel characteristics} by: 
\HAhb{R\in\HAcF(J_\HAiy)} if 
\HAbeq \label{HAC.F} 
 \HAbal
 \rm{(i)} \qquad    &R\in\HAcR(J_\HAiy)\text{ and }R(\HAiy)>0\;;\\
 \noalign{\vskip-1\jot} 
 \rm{(ii)} \qquad  &\|\HApl^kR\|_\HAiy<\HAiy\;, 
                     \ \ k\geq1\;.
 \HAeal
\HAeeq 
\begin{HAexamples}\label{HAexa-C.ex} 
\ (a) We set 
\HAhb{\HAsR_\HAal(t):=t^\HAal} for 
\HAhb{\HAal\in\HABR}. Then\\ 
\HAhb{\HAsR_\HAal\in\HAcC(J_0)} if 
\HAhb{\HAal\geq1}, 
 \HAhb{\HAsR_\HAal\in\HAcC(J_\HAiy)} if 
\HAhb{\HAal<0}, and 
\HAhb{\HAsR_\HAal\in\HAcF(J_\HAiy)} if 
\HAhb{0\leq\HAal\leq1}. 

\par 
(b) Suppose 
\HAhb{\HAba>0} and 
\HAhb{\HAga\in\HABR}. Put 
\HAhb{R(t):=\E^{\HAba(1-t^\HAga)}}. Then 
\HAhb{R\in\HAcC(J_0)} if 
\HAhb{\HAga<0}, whereas 
\HAhb{R\in\HAcC(J_\HAiy)} for 
\HAhb{\HAga>0}. 

\par 
(c) For 
\HAhb{\HAal\geq-2/\upi}
and 
\HAhb{\HAba>0} we put 
\HAhb{R_{\arctan,\HAal,\HAba}(t):=1+\HAal\arctan\bigl(\HAba(t-1)\bigr)}. Then 
\HAhb{R_{\arctan,-2/\upi,\HAba}\in\HAcC(J_\HAiy)} and 
\HAhb{R_{\arctan,\HAal,\HAba}\in\HAcF(J_\HAiy)} if 
\HAhb{\HAal>-2/\upi}.  
\qed 
\end{HAexamples}  
Let 
\HAhb{R\in\HAcC(J)}, resp.\ 
\HAhb{R\in\HAcF(J_\HAiy)}. Then the \emph{\hbox{$R$-gauge} diffeomorphism} 
$$ 
\HAsa=\HAsa[R]\HAsco J\HAra\HABR^+ 
$$ 
is defined by 
$$ 
\HAsa(t):= 
 \left\{ 
 \HAbal 
 {}
 &\HAsign(t-1)\int_1^t\,\D\utau/R     
  &\quad \text{if } &&R\in\HAcC(J)\;,\\ 
 &\int_1^t\sqrt{1+\dot R^2}\,\D\utau     
  &\quad \text{if } &&R\in\HAcF(J_\HAiy)\;. 
 \HAeal 
 \right. 
$$  
Note that 
\HAhb{\HAsa(J)=\HABR^+} and 
\HAhb{\dot\HAsa(t)\neq0} for 
\HAhb{t\in J}. Hence $\HAsa$~is indeed a diffeomorphism whose inverse is 
written 
\HAhb{\utau=\utau[R]:=\HAsa^{-1}\HAsco\HABR^+\HAra J}. We define the 
\emph{\hbox{$R$-sequence}}~$(t_j)$ by 
\HAhb{t_j=t_j[R]:=\utau(j)} for 
\HAhb{j\in\HABN}. Then $(t_j)$~is strictly increasing to~$\HAiy$ if 
\HAhb{J=J_\HAiy}, whereas it strictly decreases to~$0$ otherwise. For 
\HAhb{k\ge1} we put 
$$ 
I_k=I_k[R]:= 
 \left\{ 
 \HAbal 
 {}
 &(0,t_k]     
  &\quad \text{if } &&J=J_0\;,\\ 
 &[k,\HAiy)         
  &\quad \text{if } &&J=J_\HAiy\;. 
 \HAeal 
 \right. 
 \HAnpb 
$$ 
Thus $I_k$~is a cofinal interval of~$J$. 
\begin{lemma}\label{HAlem-C.CF} 
Suppose 
\HAhb{R\in\HAcC(J)} or\footnote{More precisely: 
\HAhb{J=J_\HAiy} and 
\HAhb{R\in\HAcF(J)}.} 
\HAhb{R\in\HAcF(J_\HAiy)}. Set 
\HAbeq \label{HAC.r0} 
r=r[R]:= 
 \left\{ 
 \HAbal 
 {}
 &R     
  &\quad \text{if } &R\in\HAcC(J)\;,\\ 
 &\HAmf{1}        
  &\quad \text{if } &R\in\HAcF(J_\HAiy)\;.
 \HAeal 
 \right. 
 \HAnpb 
\HAeeq 
Then $r$~is a singularity function for $(\HAci J,\D t^2)$ on~$I_2$. 
\end{lemma}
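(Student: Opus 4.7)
The plan is to exhibit an explicit singularity datum $(r,\HAgK)$ for $(\HAci J,\D t^2)$ on $I_2$ whose charts are obtained by pulling back a uniform system of size-one charts of $\HABR^+$ along the $R$-gauge diffeomorphism $\HAsa=\HAsa[R]$. The key observation is that $\HAsa$ carries $\D t^2/r^2$ (in the cuspidal case) or $\D t^2$ itself (in the funnel case) to a metric equivalent to $\D s^2$ on $\HABR^+$, so charts of unit length in the $s$-variable transfer to charts on $\HAci J$ whose pushforward metrics are uniformly controlled.

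For each $j\in\HABN^\times$ set $U_j:=\utau\bigl((j-1,j+1)\bigr)\HAis\HAci J$ and define the normalized chart
$$
\HAka_j\HAsco U_j\HAra Q=(-1,1)\HAqb t\HAmt\HAsa(t)-j.
$$
The family $\HAgK:=\{\,\HAka_j\ ;\ j\in\HABN^\times\,\}$ covers $\HAci J$, has multiplicity~$2$, and its transition maps $\HAka_{j_1}\circ\HAka_{j_2}^{-1}$ reduce to translations $s\HAmt s+j_2-j_1$. Hence $\HAgK$ is uniformly regular on $\HAci J$; a harmless dilation of the $\sigma$-domains (if needed) ensures shrinkability to $1/2$ on $I_2$, so (\ref{HAN.KS}) is satisfied on $I_2$.

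It remains to verify (\ref{HAN.sd})(iii)--(vi). Consider first $R\in\HAcC(J)$, so $r=R$ and $|\utau'|=R\circ\utau$. Writing $\phi:=R\circ\utau$ on $\HABR^+$, one proves by induction that
$$
\phi^{(k)}=(\pm 1)^k(D^kR)\circ\utau\HAqb D:=R\,\D/\D t,
$$
and that each $D^kR$ is a polynomial in $R,\dot R,\ldots,R^{(k)}$ in which every monomial carries at least one factor of $R$ (because $D$ introduces a fresh factor~$R$ at each step and no differentiation removes it). Since $R$ is bounded---$R(1)=1$, $\dot R$ is bounded by~(\ref{HAC.C})(iii), and $R(\HAiy)=0$ when $J=J_\HAiy$---and $\|R^{(\ell)}\|_\HAiy<\HAiy$ for $\ell\geq1$, we obtain $|\phi^{(k)}(s)|\leq c(k)\,\phi(s)$ uniformly in $s\in\HABR^+$. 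Moreover $(\log\phi)'=\dot R\circ\utau$ is bounded, which forces $\phi(s)\sim\phi(j)$ uniformly for $s\in(j-1,j+1)$. Because $\HAka_{j*}r(s)=\phi(s+j)$, $r_{\HAka_j}=\phi(j)=R(t_j)$, and $\HAka_{j*}(\D t^2)=\utau'(s+j)^2\,\D s^2=\phi(s+j)^2\,g_1$, conditions (iii)--(vi) follow at once with constants independent of~$j$.

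For $R\in\HAcF(J_\HAiy)$ the situation is simpler: $r=\HAmf{1}$ and $\utau'(s)=\bigl(1+\dot R(\utau(s))^2\bigr)^{-1/2}$ lies in $\bigl[(1+\|\dot R\|_\HAiy^2)^{-1/2},1\bigr]$. An iterated chain rule using (\ref{HAC.F})(ii) shows all derivatives of $\utau'$ are bounded on $\HABR^+$, so $\HAka_{j*}(\D t^2)=\utau'(s+j)^2\,\D s^2\sim g_1$ with uniformly bounded derivative norms, while (iii) and (iv) are trivial since $r\equiv1$. The main technical obstacle is the inductive bookkeeping in the cuspidal case: one must verify that the operator $D=R\,\D/\D t$ preserves the property that every monomial of $D^kR$ contains at least one factor of~$R$, since this is exactly what rescues $|\phi^{(k)}|\leq c(k)\phi$ from the degeneration $\phi\to0$ at the relevant end of~$J$.
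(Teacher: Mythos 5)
Your proposal is correct and follows essentially the same route as the paper: the atlas of unit translates of the gauge diffeomorphism $\HAsa$, the identity $\dot\utau=\pm R\circ\utau$ in the cuspidal case, an induction yielding $|\HApl^k(R\circ\utau)|\leq c(k)\,R\circ\utau$, the log-derivative/mean-value argument for $\rho_j\sim\rho_j(0)$, and the trivial funnel case via $\dot\utau=(1+(\utau^*\dot R)^2)^{-1/2}$. Your bookkeeping via $D=R\,\D/\D t$ and the ``every monomial of $D^kR$ retains a factor of $R$'' observation is just a repackaging of the paper's recursion $\HApl^k\rho=b_k\rho$ with $b_k$ bounded.
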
 
\begin{proof} 
(1) 
We set 
$$ 
J_j=J_j[R]:= 
 \left\{ 
 \HAbal 
 {}
 &(t_{j+1},t_{j-1})     
  &\quad \text{if } &&J=J_0\;,\\ 
 &(t_{j-1},t_{j+1})         
  &\quad \text{if } &&J=J_\HAiy\;. 
 \HAeal 
 \right. 
$$ 
Then $J_j$~is a nonempty open subinterval of~$\HAci J$ for 
\HAhb{j\geq1}, and 
\HAhb{\{\,J_j\ ;\ j\geq1\,\}} is a covering of~$\HAci J$ of 
multiplicity~$2$. We let 
\HAbeq \label{HAC.sj} 
\HAsa_j:=\HAsa\HAsn J_j-j 
\HAqa j\geq1\;. 
\HAeeq 
Then 
\HAhb{\HAgS=\HAgS[R]:=\{\,\HAsa_j\ ;\ j\geq1\,\}} is a normalized atlas, the 
\emph{\hbox{$R$-atlas}}, for~$\HAci J$ of multiplicity~$2$ which is 
shrinkable to 
\HAhb{1/2}. Note that 
\HAhb{\utau_j=\utau_j[R]:=\HAsa_j^{-1}} satisfies 
\HAbeq \label{HAC.tj} 
\utau_j(s)=\utau(s+j) 
\HAqa s\in Q 
\HAqb j\geq1\;. 
\HAeeq 

\par 
By (\ref{HAC.sj}) and~(\ref{HAC.tj}) we see that 
\HAhb{\HAsa_j\circ\utau_k(s)=s+k-j\in Q} if 
\HAhb{s\in Q} and 
\HAhb{\utau_k(s)\in J_j}. This proves that $\HAgS$~is uniformly regular 
on~$I$. 

\par 
(2) 
We set 
\HAhb{\rho:=R\circ\utau=\utau^*R}. Then 
\HAbeq \label{HAC.dr} 
\dot\rho=(\utau^*\dot R)\dot\utau\;.  
\HAeeq 
Furthermore, 
\HAhb{\HAsa\circ\utau=\HAid} implies 
\HAbeq \label{HAC.dt} 
\dot\utau=1/\utau^*\dot\HAsa\;. 
\HAeeq 

\par 
(3) 
Assume 
\HAhb{R\in\HAcC(J)}. If 
\HAhb{J=J_0}, then 
\HAhb{R(0)=0} by~(\ref{HAC.C})(ii). Thus, for each choice of~$J$, 
\HAbeq \label{HAC.r} 
0<\rho\leq c\;. 
\HAeeq 
Since 
\HAhb{\dot\HAsa(t)=\HAsign(t-1)/R(t)} we get from~(\ref{HAC.dt}) 
\HAbeq \label{HAC.t} 
\dot\utau=\HAsign(\utau-1)\rho\;. 
\HAeeq 
Hence, by~(\ref{HAC.dr}) and setting 
\HAhb{\HAve:=\HAsign(\utau-1)}, 
\HAbeq \label{HAC.drb} 
\dot\rho=b_1\rho 
\HAqa b_1:=\HAve\utau^*\dot R\in BC\HARp\;.  
\HAeeq 
Furthermore, 
\HAbeq \label{HAC.db} 
\dot b_1=\HAve(\utau^*\ddot R)\dot\utau=(\utau^*\ddot R)\rho\in BC\HARp\;,  
\HAeeq 
due to (\ref{HAC.t}) and~(\ref{HAC.C})(iii). Consequently, we obtain 
from~(\ref{HAC.drb}) 
$$ 
\ddot\rho=b_2\rho 
\HAqa b_2:=\dot b_1+b_1^2\in BR(\HABR)\;. 
$$ 
By induction 
\HAbeq \label{HAC.dkr} 
\HApl^k\rho=b_k\rho 
\HAqa b_k:=\dot b_{k-1}+b_{k-1}b_1 
\HAqb k\geq2\;. 
\HAeeq 
Thus $b_k$~is a polynomial function in the variables 
\HAhb{b_1,\dot b_1,\ldots,\HApl^{k-1}b_1} with coefficients in~$\HABZ$. 

\par 
From \hbox{(\ref{HAC.t})--(\ref{HAC.db})} we get 
$$ 
\ddot b_1=\HAve(\utau^*\HApl^3R)\rho^2+\utau^*(\ddot R)(\utau^*\dot R)\rho\;. 
$$ 
Hence we find, once more inductively, that $\HApl^\ell b_1$~is a 
polynomial function in the variables 
\HAhb{\rho,\utau^*\HApl R,\ldots,\utau^*\HApl^{\ell+1}R} with coefficients 
in~$\HABZ$. Consequently, $b_k$~is a polynomial function in the variables 
\HAhb{\rho,\utau^*\HApl R,\ldots,\utau^*\HApl^{k+1}R}. Hence 
\HAhb{b_k\in BC(\HABR)} by (\ref{HAC.r}) and (\ref{HAC.C})(iii). 
Thus we obtain from~(\ref{HAC.dkr}) 
\HAbeq \label{HAC.dkrr} 
|\HApl^k\rho|\leq c(k)\rho 
\HAqa k\geq1\;. 
\HAeeq 

\par 
It follows from 
\HAhb{\HApl\log\rho=\dot\rho/\rho} and the last estimate that 
\HAhb{\HAba:=\|\HApl\log\rho\|_\HAiy<\HAiy}. Hence, by the 
mean-value theorem, 
$$ 
\bigl|\log\bigl(\rho(s)\big/\rho(t)\bigr)\bigr| 
=|\log \rho(s)-\log\rho(t)| 
\leq\HAba\,|s-t| 
\HAqa s,t\geq0\;.  
$$ 
This implies 
\HAhb{\E^{-\HAba}\leq\rho(s)/\rho(t)\leq\E^\HAba} for 
\HAhb{|s-t|\leq1}, that is, 
\HAbeq \label{HAC.rr} 
\rho(s)\sim\rho(t) 
\HAqa s,t\in\HABR^+ 
\HAqb |s-t|\leq1\;. 
\HAeeq 

\par 
Since 
\HAhb{\rho_j:=\utau_j^*R=\rho(\cdot+j)} we deduce from~(\ref{HAC.rr}) 
\HAbeq \label{HAC.rrj} 
\rho_j\sim\rho_j(0) 
\HAqa j\geq1\;. 
\HAeeq 
Furthermore, since 
\HAhb{\HApl\rho_j=(\HApl\rho)(\cdot+j)}, we obtain from (\ref{HAC.dkrr}) 
and~(\ref{HAC.rrj}) 
\HAbeq \label{HAC.dkrj} 
\|\HApl^k\rho_j\|_\HAiy\leq c(k)\rho_j(0) 
\HAqa j\geq1 
\HAqb k\geq0\;. 
\HAeeq 
Due to 
\HAhb{R=r} and 
\HAhb{\utau_j^*r=\HAka_{j*}r} we see from (\ref{HAC.rrj}) 
and~(\ref{HAC.dkrj}) that 
\HAhb{r\in C\bigl(\HAci J,(0,\HAiy)\bigr)} satisfies 
(\ref{HAN.sd})(iii),~(iv) with 
\HAhb{\HAgK=\HAgS} and 
\HAhb{S=I_2}. 

\par 
(4) 
Suppose 
\HAhb{R\in\HAcF(J_\HAiy)}. Since 
\HAhb{\dot\HAsa=(1+\dot R^2)^{1/2}} we get from~(\ref{HAC.dt}) 
\HAbeq \label{HAC.tF} 
\dot\utau=\bigl(1+(\utau^*\dot R)^2\bigr)^{-1/2}\;. 
\HAeeq 
Using this and 
\HAhb{\|\dot R\|_\HAiy<\HAiy} we obtain 
\HAbeq \label{HAC.dtC} 
1/c\leq\dot\utau\leq1\;. 
\HAeeq 
From (\ref{HAC.tF}),\; (\ref{HAC.dtC}), and (\ref{HAC.F})(ii) we deduce 
inductively 
\HAbeq \label{HAC.dkt} 
\|\HApl^k\utau\|_\HAiy<\HAiy 
\HAqa k\geq1\;. 
\HAeeq 
Hence (\ref{HAC.tj}) implies 
\HAbeq \label{HAC.tjk} 
\dot\utau_j\sim\HAmf{1} 
\HAqa \|\HApl^k\utau_j\|_\HAiy\leq c(k) 
\HAqa j\geq1 
\HAqb k\geq1\;. 
\HAnpb 
\HAeeq 
Thus 
\HAhb{r=\HAmf{1}} satisfies (\ref{HAN.sd})(iv) with 
\HAhb{\HAgK=\HAgS} and 
\HAhb{S=I_2}, and 
(\ref{HAN.sd})(iii) is trivially true. 

\par 
(5) 
Again we assume 
\HAhb{R\in\HAcC(J)} or 
\HAhb{R\in\HAcF(J_\HAiy)}. Then 
\HAbeq \label{HAC.sg} 
\HAsa_{j*}\,\D t^2=\utau_j^*\,\D t^2=\D\utau_j^2=\dot\utau_j^2\,\D s^2\;. 
\HAeeq 
If 
\HAhb{R\in\HAcC(J)}, then we get 
\HAhb{\dot\utau_j^2=\rho_j^2} from~(\ref{HAC.t}). Hence 
$$ 
\HAsa_{j*}\,\D t^2=\rho_j^2\,\D s^2=(r_{\HAsa_j})^2\,\D s^2 
\HAqa j\geq1\;. 
$$ 
If 
\HAhb{R\in\HAcF(J_\HAiy)}, then we obtain from (\ref{HAC.tjk}) 
and~(\ref{HAC.sg}) 
$$ 
\HAsa_{j*}\,\D t^2\sim\D s^2=(r_{\HAsa_j})^2\,\D s^2 
\HAqa j\geq1\;. 
\HAnpb 
$$ 
Hence (\ref{HAN.sd})(v) applies to~$r$ and 
\HAhb{g=\D t^2} with 
\HAhb{\HAgK=\HAgS} and 
\HAhb{S=I_2} as well. 

\par 
(6)  
Using~(\ref{HAC.sg}), we infer from (\ref{HAC.t}) and (\ref{HAC.dkrj}) if 
\HAhb{R\in\HAcC(J)}, respectively from (\ref{HAC.t}) and (\ref{HAC.dkt}) if 
\HAhb{R\in\HAcF(J_\HAiy)}, that 
$$ 
\|\HApl^k(\HAsa_*\,\D t^2)\|_\HAiy\leq c(k)r_\HAsa^2 
\HAqa \HAsa\in\HAgS_I 
\HAqb k\geq0\;. 
\HAnpb 
$$ 
Thus (\ref{HAN.sd})(vi) is also satisfied. This proves the assertion. 
\qed 
\end{proof} 
\section{Model Cusps and Funnels}\label{HAsec-F}%
We suppose 
\HAhb{R\in\HAcR(J)}, 
\ \HAhb{0\leq d\leq\bar d}, and $B$~is a \hbox{$d$-dimensional} 
submanifold of~$\HABR^{\bar d}$. Let $I$ be an open cofinal subinterval 
of~$\HAci J$. We set 
$$ 
P(R,B;I):= 
\bigl\{\,\bigl(t,R(t)y\bigr)\ ;\ t\in\HAci J, 
\ y\in B\,\bigr\} 
\HAis\HABR\times\HABR^{\bar d}=\HABR^{1+\bar d}\;. 
$$ 
Then 
\HAhb{P=P(R,B)=P(R,B;\HAci J)} is~a 
\HAhb{(1+d)}-dimensional submanifold of~$\HABR^{1+\bar d}$, the 
(\emph{model}) $(R,B)$\emph{-pipe on}~$J$, also called 
(\emph{model}) \hbox{$R$\emph{-pipe}} \emph{over} (\emph{the basis})~$B$ 
\emph{on}~$J$. Note 
$$ 
\HApl P(R,B)=P(R,\HApl B)\;, 
$$ 
where 
\HAhb{P(R,\HAes):=\HAes}. An \hbox{$R$-pipe} is an \hbox{$R$-\emph{cusp}} 
if 
\HAhb{R(\HAom)=0}, where 
\HAhb{\HAom\in\{0,\HAiy\}} and 
\HAhb{J=J_\HAom}, and an \hbox{$R$\emph{-funnel}} otherwise. The map 
\HAbeq \label{HAF.ph} 
\HAvp=\HAvp[R]\HAsco P\HAra\HAci J\times B 
\HAqb \bigl(t,R(t)y\bigr)\HAmt(t,y) 
\HAnpb 
\HAeeq 
is a diffeomorphism, the \emph{canonical stretching diffeomorphism} 
of~$P$. 

\par 
If 
\HAhb{d=0}, then $B$~is a countable discrete subset of~$\HABR^{\bar d}$. In 
abuse of language and for a unified presentation we call it uniformly 
regular Riemannian manifold as well and write formally $(B,g_B)$ for~$B$, 
although $g_B$~has no proper meaning. In this case $g_B$~has to be replaced 
by~$0$ in the formulas below.  

\par 
Suppose 
\HAhb{p\in C^\HAiy\bigl(P,(0,\HAiy)\bigr)} and $g_P$~is a Riemannian metric 
for~$P$. Then $p$~is a \emph{cofinal singularity function for 
$(P,g_P)$ on} 
\HAhb{S\HAis B} if there exists a cofinal subinterval~$I$ of~$J$ such 
that $p$~is a singularity function for $(P,g_P)$ on 
\HAhb{\HAvp^{-1}(I\times S)}. It follows from 
Lemma~\ref{HAlem-U.C} that this is then true for every cofinal 
subinterval of~$J$. In related situations the qualifier `cofinal' has 
similar (obvious) meanings. 

\par 
We consider the following assumption: 
\HAbeq \label{HAF.B} 
\HAbal 
\\ 
\noalign{\vskip-7\jot} 
&g_B\text{ is a Riemannian metric for $B$,  
 \ $S\HAis B$, and}\\ 
\noalign{\vskip-1\jot} 
&b\text{ is a bounded singularity function for $(B,g_B)$ on }S.  
\HAeal 
\HAeeq 
\begin{lemma}\label{HAlem-F.g} 
Let condition~(\ref{HAF.B}) apply. Suppose 
\HAhb{a\in C^\HAiy\bigl(J,(0,\HAiy)\bigr)} and $r$~is a bounded 
singularity function for $(\HAci J,a\,\D t^2)$ on some cofinal 
subinterval of~$J$. Let 
\HAhb{R\in\HAcR(J)} and set 
\HAbeq \label{HAF.gr} 
g:=\HAvp^*(a\,\D t^2+g_B) 
\HAqb p:=\HAvp^*(r\otimes b)\;. 
\HAnpb 
\HAeeq 
Then $p$~is a cofinal singularity function for $(P,g)$ on~$S$. 
\end{lemma}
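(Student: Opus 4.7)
The plan is to reduce the statement to two already-proved general facts: Theorem~\ref{HAthm-P.MM} (singularity functions behave tensorially under Riemannian products) and Lemma~\ref{HAlem-P.f} (the class of singular manifolds is isometry-invariant). The diffeomorphism $\HAvp$ defined in (\ref{HAF.ph}) takes $P$ onto $\HAci J\times B$, and the metric~$g$ and function~$p$ in (\ref{HAF.gr}) are, by construction, the pull-backs under~$\HAvp$ of a product metric and a tensor-product function. So once singularity is established downstairs on $\HAci J\times B$, Lemma~\ref{HAlem-P.f} transports everything back to~$P$.

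More concretely, first I would fix a cofinal subinterval~$I$ of~$J$ on which $r$ is a singularity function for $(\HAci J,\,a\,\D t^2)$. Since $\HAci J$ has empty boundary, Theorem~\ref{HAthm-P.MM} applies to the pair $(\HAci J,a\,\D t^2)$ and $(B,g_B)$ with $\tilde M=\HAci J$ playing the role of the factor without boundary; both singularity functions $r$ and~$b$ are bounded by hypothesis, so the theorem's assumptions are satisfied. It yields that $r\otimes b$ is a singularity function for
$$
\bigl(\HAci J\times B,\;a\,\D t^2+g_B\bigr)
$$
on the set $I\times S$.

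Then I would apply Lemma~\ref{HAlem-P.f} to the diffeomorphism $\HAvp\HAsco P\HAra\HAci J\times B$, with $\rho=r\otimes b$. The lemma gives that $\HAvp^*(r\otimes b)=p$ is a singularity function for $\bigl(P,\HAvp^*(a\,\D t^2+g_B)\bigr)=(P,g)$ on $\HAvp^{-1}(I\times S)$. By the very definition of ``cofinal singularity function on~$S$'' introduced just before the lemma, this is exactly the required conclusion.

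The only real thing to check is that the two hypotheses of Theorem~\ref{HAthm-P.MM}---boundedness of each singularity function and absence of boundary in one factor---are met here; both are immediate, the first from the assumed boundedness of $r$ and~$b$, the second from the fact that $\HAci J$ is an open interval. So the main ``obstacle'' is purely bookkeeping: matching the cofinal subinterval supplied by the hypothesis on~$r$ with the definition of cofinal singularity function via~$\HAvp^{-1}(I\times S)$, and verifying that the naming of factors in Theorem~\ref{HAthm-P.MM} is consistent with our product $\HAci J\times B$ (trivially, by swapping the roles of $M$ and $\tilde M$).
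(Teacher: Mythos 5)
Your proposal is correct and follows essentially the same route as the paper: the paper's proof likewise invokes Theorem~\ref{HAthm-P.MM} to conclude that $r\otimes b$ is a cofinal singularity function for $\bigl(\HAci J\times B,\ a\,\D t^2+g_B\bigr)$ on~$S$, and then transfers the result to $(P,g)$ via the stretching diffeomorphism~(\ref{HAF.ph}) and Lemma~\ref{HAlem-P.f}. Your additional verification of the hypotheses (boundedness of $r$ and~$b$, emptiness of $\HApl\HAci J$) is sound bookkeeping that the paper leaves implicit.
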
 
\begin{proof} 
By Theorem~\ref{HAthm-P.MM} 
\ \HAhb{r\otimes b} is a cofinal singularity function for 
\HAhb{(\HAci J\times B,\ a\,\D t^2+g_B)} on~$S$. Hence the assertion follows 
from (\ref{HAF.ph}) and Lemma~\ref{HAlem-P.f}. 
\qed 
\end{proof} 
\begin{corollary}\label{HAcor-F.g} 
Put 
\HAbeq \label{HApro-F.gph} 
\hat g:=\HAvp^*\bigl((a\,\D t^2+g_B)\big/(r\otimes b)^2\bigr)\;. 
\HAnpb 
\HAeeq 
Then $(P,\hat g)$ is cofinally uniformly regular on~$S$. 
\end{corollary}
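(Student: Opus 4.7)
The plan is to derive the corollary essentially as a one-line consequence of Lemma~\ref{HAlem-F.g} combined with the definitional unfolding of ``singular of type $\rho$''. Recall that by definition $\rho$ is a singularity function for $\HAMg$ on $S$ precisely when $(M, g/\rho^2)$ is uniformly regular on $S$; so the content of cofinal uniform regularity of $(P,\hat g)$ on $S$ is exactly that a suitable conformally rescaled metric of the form $g/p^2$ has the required properties on $\HAvp^{-1}(I\times S)$ for some cofinal $I\HAis J$.

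First I would invoke Lemma~\ref{HAlem-F.g} with the given data $a$, $r$, $R$, $g_B$, $b$ to obtain that $p := \HAvp^*(r\otimes b)$ is a cofinal singularity function for $(P,g)$ on $S$, where $g = \HAvp^*(a\,\D t^2 + g_B)$. By the definition of ``cofinal singularity function on $S$'' this means there exists a cofinal subinterval $I$ of $J$ such that $p$ is a singularity function for $(P,g)$ on $\HAvp^{-1}(I\times S)$; in turn, this means that $(P, g/p^2)$ is uniformly regular on $\HAvp^{-1}(I\times S)$.

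Next I would observe that $\hat g$ coincides with $g/p^2$. Indeed, since $\HAvp$ is a diffeomorphism, pull-back commutes with pointwise scalar division: denoting by $h := a\,\D t^2 + g_B$ and $w := r\otimes b$ on $\HAci J\times B$, we have $g/p^2 = \HAvp^* h/(\HAvp^* w)^2 = \HAvp^*(h/w^2) = \hat g$, where in the middle equality we use that $\HAvp^* w = w\circ\HAvp$ is a positive scalar function and $\HAvp^*(h/w^2)(X,Y) = (h/w^2)(\HAvp_*X,\HAvp_*Y) = h(\HAvp_*X,\HAvp_*Y)/w^2(\HAvp(\cdot))$. Hence the uniform regularity of $(P, g/p^2)$ on $\HAvp^{-1}(I\times S)$ is precisely the cofinal uniform regularity of $(P,\hat g)$ on $S$.

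I do not expect any genuine obstacle here: all the real work is encapsulated in Lemma~\ref{HAlem-F.g} (which in turn rests on Theorem~\ref{HAthm-P.MM} and Lemma~\ref{HAlem-P.f}). The only minor point worth checking carefully is the commutation of pull-back with the conformal scaling, but this is immediate from the functoriality of $\HAvp^*$ on $(0,2)$-tensor fields together with the fact that $\HAvp^*$ of a positive function is the composition $w\circ\HAvp$.
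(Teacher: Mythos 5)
Your proposal is correct and matches the paper's (implicit) argument: the corollary is stated without separate proof precisely because it is the combination of Lemma~\ref{HAlem-F.g} with the definitional fact that a singularity function $p$ for $(P,g)$ means $(P,g/p^2)$ is uniformly regular, together with the identity $\hat g=g/p^2$ coming from functoriality of the pull-back under conformal scaling. Nothing is missing.
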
 
The following two propositions are cornerstones for the construction of 
wide classes of singular manifolds. 
\begin{proposition}\label{HApro-F.P} 
Let (\ref{HAF.B}) be satisfied and suppose 
\HAhb{R\in\HAcC(J)} or 
\HAhb{R\in\HAcF(J_\HAiy)}. Set 
\HAhb{a:=\HAmf{1}}. Define~$r$ by~(\ref{HAC.r0}) and $g$~by (\ref{HAF.gr}). 
Then $p$~is a cofinal singularity function for $(P,g)$ on~$S$. 
\end{proposition}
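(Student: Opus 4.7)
The proposition is essentially the composition of the two preceding results, so my plan is very direct. First I would observe that since $a=\HAmf{1}$, the hypothesis in Lemma~\ref{HAlem-F.g} on $(\HAci J,a\,\D t^2)$ reduces to one about $(\HAci J,\D t^2)$, which is precisely what Lemma~\ref{HAlem-C.CF} delivers: under either $R\in\HAcC(J)$ or $R\in\HAcF(J_\HAiy)$, the function $r$ from~(\ref{HAC.r0}) is a singularity function for $(\HAci J,\D t^2)$ on the cofinal interval $I_2=I_2[R]$.

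To apply Lemma~\ref{HAlem-F.g} I then need to check that $r$ is bounded. If $r=\HAmf{1}$, this is trivial. If $r=R\in\HAcC(J)$, then $R\in C^\HAiy\bigl(J,(0,\HAiy)\bigr)$ with $R(1)=1$ and $R(\HAom)=0$ at the noncompact endpoint ($\HAom\in\{0,\HAiy\}$); continuity of~$R$ on the two-point compactification $\HAol J$ with a zero limit at~$\HAom$ gives a finite maximum, hence boundedness. (If $J=J_0$, the divergence of $\int_J\,\D t/R$ together with the existence of the limit $R(0)\in[0,\HAiy]$ forces $R(0)=0$, and boundedness follows likewise.)

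With $r$ in hand I would simply invoke Lemma~\ref{HAlem-F.g}: given the boundedness of the singularity function $b$ for $\HAVg_B$ on $S\HAis B$ supplied by~(\ref{HAF.B}), that lemma produces the cofinal singularity function $p=\HAvp^*(r\otimes b)$ for the pipe $(P,g)=\bigl(P,\HAvp^*(\D t^2+g_B)\bigr)$ on~$S$, which is exactly the conclusion of the proposition.

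There is no real obstacle; the substantive work has already been done in Lemma~\ref{HAlem-C.CF} (where the $R$-atlas on $\HAci J$ is built and conditions (\ref{HAN.sd})(iii)--(vi) are verified via the \hbox{$R$-sequence}) and in Theorem~\ref{HAthm-P.MM} (the product-of-singular-manifolds result, invoked inside Lemma~\ref{HAlem-F.g}). The only minor point worth articulating in the write-up is the boundedness of~$r$, since Lemma~\ref{HAlem-F.g} explicitly requires this so that the product construction in Theorem~\ref{HAthm-P.MM} applies; after that, the proof reduces to a one-line citation.
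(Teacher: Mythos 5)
Your proposal is correct and follows exactly the paper's own proof, which simply cites Lemma~\ref{HAlem-C.CF} and Lemma~\ref{HAlem-F.g}. Your extra remark on the boundedness of~$r$ is a legitimate point that the paper leaves implicit (it is established inside step~(3) of the proof of Lemma~\ref{HAlem-C.CF}, where $R(0)=0$ is deduced from (\ref{HAC.C})(ii) and $0<\rho\leq c$ follows), and your derivation of it is the same as the paper's.
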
 
\begin{proof} 
Lemmas \ref{HAlem-C.CF} and~\ref{HAlem-F.g}. 
\qed 
\end{proof} 
We write 
\HAhb{\psi\in\HAcR_0\HAJhJ} if 
\HAhb{\psi\in\HAcR(J)} with 
\HAhb{\psi(J)=\hat J\in\{J_0,J_\HAiy\}} and 
\HAhb{\HAthpsi(t)\neq0} for 
\HAhb{t\in J}. Thus $\psi$~is a diffeomorphism from~$J$ onto~$\hat J$. 
\begin{proposition}\label{HApro-F.I} 
Suppose (\ref{HAF.B}) applies, 
\HAhb{\psi\in\HAcR_0\HAJhJ}, and 
\HAhb{\hat R\in\HAcC(\hat J\,)}, or 
\HAhb{\hat J=J_\HAiy} and 
\HAhb{\hat R\in\HAcF(J_\HAiy)}. Set 
$$ 
R:=\psi^*\hat R 
\HAqb \HAvp:=\HAvp[R] 
\HAqb g:=\HAvp^*(\HAthpsi^2\,\D t^2+g_B)\;, 
$$ 
and 
$$ 
r=r[R]:= 
 \left\{ 
 \HAbal 
 {}
 &R     
  &\quad \text{if } &\hat R\in\HAcC(\hat J\,)\;,\\ 
 &\HAmf{1}   
  &\quad \text{if } &\hat R\in\HAcF(J_\HAiy)\;. 
 \HAeal 
 \right. 
 \HAnpb 
$$  
Then 
\HAhb{p:=\HAvp^*(r\otimes b)} is a cofinal singularity function 
for $(P,g)$ on~$S$. 
\end{proposition}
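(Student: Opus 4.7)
The plan is to reduce Proposition~\ref{HApro-F.I} to Lemma~\ref{HAlem-F.g} by using the diffeomorphism $\psi$ to transport the cusp/funnel data from $\hat J$ onto $J$. The key observation is that, writing $\hat t = \psi(t)$, one has $\D\hat t = \HAthpsi\,\D t$ and hence $\psi^*(\D\hat t^2) = \HAthpsi^2\,\D t^2$. Thus $\psi$ realizes an isometry from $(\HAci J,\HAthpsi^2\,\D t^2)$ onto $(\HAci\hat J,\D\hat t^2)$.

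By Lemma~\ref{HAlem-C.CF} applied on $\hat J$, the function $\hat r := r[\hat R]$ is a singularity function for $(\HAci\hat J,\D\hat t^2)$ on the cofinal subinterval $I_2[\hat R]$. Pulling back via Lemma~\ref{HAlem-P.f} then shows that $\psi^*\hat r$ is a singularity function for $(\HAci J,\HAthpsi^2\,\D t^2)$ on the cofinal subinterval $\psi^{-1}(I_2[\hat R])$ of $J$. A quick case distinction identifies this pullback with $r[R]$: if $\hat R\in\HAcC(\hat J\,)$ then $\hat r=\hat R$ and $\psi^*\hat r = \psi^*\hat R = R = r$; if $\hat R\in\HAcF(J_\HAiy)$ then $\hat r=\HAmf{1}$ and $\psi^*\hat r=\HAmf{1}=r$.

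The hypotheses of Lemma~\ref{HAlem-F.g} are now in force with $a:=\HAthpsi^2$ and the singularity function $r=r[R]$ just produced. The required smooth-positivity of $a$ is automatic since $\HAthpsi$ vanishes nowhere on $J$ (from $\psi\in\HAcR_0\HAJhJ$), and $R=\psi^*\hat R\in\HAcR(J)$ because $\psi(1)=1$ gives $R(1)=1$ while $R(\HAom)=\hat R(\psi(\HAom))$ exists in $[0,\HAiy]$. Boundedness of $r$, which is what Theorem~\ref{HAthm-P.MM} inside Lemma~\ref{HAlem-F.g} requires, is trivial for $r=\HAmf{1}$ in the funnel case, and in the cusp case follows by observing that condition (\ref{HAC.C})(ii) forces $R$ to extend continuously by $0$ to the endpoint of $J$ opposite $t=1$, making $R$ bounded on $J$. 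Lemma~\ref{HAlem-F.g} then delivers the conclusion that $p=\HAvp^*(r\otimes b)$ is a cofinal singularity function for $(P,g)$ on $S$.

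The only real bookkeeping is the case analysis matching $\psi^*\hat r$ with $r[R]$ and confirming boundedness; no deeper obstruction is expected. In essence, the proposition says that the singularity-function construction on model pipes is compatible with smooth reparametrizations of the base interval, and $\psi$ provides exactly such a reparametrization intertwining the ambient metrics $\D\hat t^2$ and $\HAthpsi^2\,\D t^2$.
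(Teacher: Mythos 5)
Your proof is correct and follows essentially the same route as the paper: both arguments transport the problem along the reparametrization $\psi$ and reduce matters to Lemma~\ref{HAlem-C.CF} plus Lemma~\ref{HAlem-F.g} via the pullback Lemma~\ref{HAlem-P.f}. The only difference is in where the pullback is performed --- the paper works at the level of the full pipe (an isometry $\Phi\colon(P,g)\to(\hat P,\hat g)$ followed by Proposition~\ref{HApro-F.P}), whereas you pull back on the one-dimensional factor first and then invoke Lemma~\ref{HAlem-F.g} with the nonconstant weight $a=\HAthpsi^2$; the two orderings commute, and your bookkeeping (the identification $\psi^*\hat r=r$, and the boundedness of $r$, which in the case $\hat J=J_\HAiy$ comes from (\ref{HAC.C})(i) rather than (ii)) is sound.
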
 
\begin{proof} 
We write 
\HAhb{\hat P:=P(\hat R,B)}, 
\ \HAhb{\hat\HAvp:=\HAvp[\hat R]}, and 
\HAhb{\hat g:=\hat\HAvp^*(\D s^2+g_B)}. Then 
\HAbeq \label{HAF.Ph} 
\Phi:=\hat\HAvp^{-1}\circ(\psi\times\HAid_B)\circ\HAvp 
\HAsco P\HAra\hat P 
\HAeeq 
is a diffeomorphism and 
\HAbeq \label{HAF.Phg} 
\Phi^*\hat g=\HAvp^*(\psi^*\times\HAid_B)\hat\HAvp_*\hat g 
=\HAvp^*(\psi^*\times\HAid_B)(\D s^2+g_B) 
=\HAvp^*(\HAthpsi^2\,\D t^2+g_B)=g\;. 
\HAeeq 
Furthermore, setting 
\HAhb{\hat r:=r[\hat R]} and 
\HAhb{\hat p:=\hat\HAvp^*(\hat r\otimes b)}, 
\HAbeq \label{HAF.Php} 
\Phi^*\hat p=\HAvp^*(\psi^*\times\HAid_B)(\hat r\otimes b) 
=\HAvp^*(r\otimes b)=p\;. 
\HAeeq 
Proposition~\ref{HApro-F.P} guarantees that $\hat p$~is a 
cofinal singularity function for $(\hat P,\hat g)$ on~$S$. Hence the 
assertion follows from \hbox{(\ref{HAF.Ph})--(\ref{HAF.Php})}
and Lemma~\ref{HAlem-P.f}. 
\qed 
\end{proof} 
Now we provide some examples. The most important ones concern 
\hbox{$\HAal$\emph{-pipes}}, that is, \hbox{$\HAsR_\HAal$-pipes} over~$B$ 
on~$J$. We write 
\HAhb{P_\HAal=P_\HAal(B):=P(\HAsR_\HAal,B)} and 
\HAhb{\HAvp_\HAal:=\HAvp[\HAsR_\HAal]} for 
\HAhb{\HAal\in\HABR}. 
\begin{HAexamples}\label{HAexa-F.ex} 
Let (\ref{HAF.B}) be satisfied. 

\par 
(a) 
Set 
\HAhb{g_\HAal:=\HAvp_\HAal^*(\D t^2+g_B)}, 
$$ 
p_\HAal:=\HAvp_\HAal^*(\HAsR_\HAal\otimes b) 
\text{ if either $J=J_0$ and }\HAal\geq1\;, 
\text{ or $J=J_\HAiy$ and }\HAal<0\;, 
$$ 
and 
$$ 
p_\HAal:=\HAvp_\HAal^*(\HAmf{1}\otimes b) 
\text{ if $J=J_\HAiy$ and }0\leq\HAal\leq1\;. 
\HAnpb 
$$ 
Then $p_\HAal$~is a cofinal singularity function for 
$(P_\HAal,g_\HAal)$ on~$S$. 
\begin{proof} 
Example~\ref{HAexa-C.ex}(a) and Proposition~\ref{HApro-F.P}. 
\qed 
\end{proof} 

\par 
(b) 
We put 
$$ 
p_\HAal:=\HAvp_\HAal^*(\HAsR_\HAal\otimes b) 
\text{ if $J=J_0$ and }0<\HAal\leq1\;, 
$$ 
and 
$$ 
p_\HAal:=\HAvp_\HAal^*(\HAmf{1}\otimes b) 
\text{ if either $J=J_0$ and }\HAal\leq0\;, 
\text{ or $J=J_\HAiy$ and }\HAal\geq1\;. 
$$ 
We also fix 
\HAhb{\HAba\neq0} such that 
\HAhb{0<\HAba\leq\HAal} if 
\HAhb{J=J_0} and 
\HAhb{0<\HAal\leq1}, 
\ \HAhb{\HAba\geq\HAal} if 
\HAhb{J=J_\HAiy} and 
\HAhb{\HAal>1}, and 
\HAhb{\HAba\leq\HAal} if 
\HAhb{J=J_0} and 
\HAhb{\HAal\leq0}. Then $p_\HAal$~is a cofinal singularity function for 
$(P_\HAal,g_{\HAal,\HAba})$ on~$S$, where 
\HAhb{g_{\HAal,\HAba}:=\HAvp_\HAal^*(t^{2(\HAba-1)}\,\D t^2+g_B)}. 
\begin{proof} 
Note that 
\HAhb{\HAsR_\HAba\in\HAcR_0\HAJhJ} with 
\HAhb{\hat J=J} if 
\HAhb{\HAba>0}, and 
\HAhb{\hat J=J_\HAiy} if 
\HAhb{J=J_0} and 
\HAhb{\HAba<0}. Moreover, 
\HAhb{\HAsR_\HAba^*\HAsR_\HAga=\HAsR_{\HAba\HAga}} for 
\HAhb{\HAga\in\HABR}. 

\par 
We put 
\HAhb{\psi:=\HAsR_\HAba} and 
\HAhb{\hat R:=\HAsR_{\HAal/\HAba}} so that 
\HAhb{\psi^*\hat R=\HAsR_\HAal}. It follows 
from Example~\ref{HAexa-C.ex}(a) that 
\HAhb{\hat R\in\HAcC(J_0)} if 
\HAhb{J=J_0} and 
\HAhb{0<\HAal\leq1}, and 
\HAhb{\hat R\in\HAcF(J_\HAiy)} otherwise. Moreover, 
\HAhb{\HAthpsi=\HAthsR_\HAba\sim\HAsR_{\HAba-1}}. Now the claim follows from 
Proposition~\ref{HApro-F.I}. 
\qed 
\end{proof} 

\par 
(c) 
Suppose 
\HAhb{J=J_0} and 
\HAhb{R(t):=1-\HAal\arctan(1-1/t)} with 
\HAhb{\HAal\geq-2/\upi}. Set 
$$ 
p:=\HAvp^*(\HAmf{1}\otimes b)\text{ if }\HAal>-2/\upi 
\HAqb p:=\HAvp^*(R\otimes b)\text{ if }\HAal=-2/\upi\;, 
\HAnpb 
$$ 
and 
\HAhb{g:=\HAvp^*(t^{-4}\,\D t^2+g_B)}. Then $p$~is a cofinal 
singularity function for $(P,g)$ on~$S$. 
\end{HAexamples} 
\begin{proof} 
Put 
\HAhb{\hat R:=R_{\arctan,\HAal,1}} (see Example~\ref{HAexa-C.ex}(c)) and 
\HAhb{\psi:=\HAsR_{-1}}. Then 
\HAhb{R=\psi^*\hat R} and 
\HAhb{\HAthpsi\sim\HAsR_{-2}}. Hence Example~\ref{HAexa-C.ex}(c) and 
Proposition~\ref{HApro-F.I} imply the assertion. 
\qed 
\end{proof} 
\section{Submanifolds of Euclidean Spaces}\label{HAsec-E}%
Now we consider the case where $\HAMg$ is a Riemannian submanifold of 
$\HARngn$ for some 
\HAhb{n\in\HABN^\times}. In other words, we assume 
$$ 
\HAMg\HAhr\HARngn\;. 
$$ 
By Nash's theorem this is no restriction of generality. It is now natural 
and convenient to describe~$M$ by local parametrizations. Hereby, given a 
local chart~$\HAka$ for~$M$, the map 
$$ 
i_\HAka:=\HAia_M\circ\HAka^{-1} 
\in C^\HAiy\bigl(\HAka(U_{\HAcoU\HAka}),\HABR^n\bigr) 
$$ 
is the \emph{local parametrization associated with}~$\HAka$. The 
following lemma provides a useful tool for establishing that a given 
function~$\rho$ on~$M$ is a singularity function for~$\HAMg$. 

\par 
\emph{By a parametrization-regular 
\hbox{(p-r)} singularity datum for~$\HAMg$ on} 
\HAhb{S\HAis M} we mean a pair $\HArgK$ with the following properties: 
\HAbeq \label{HAE.pr} 
 \HAbal
  \\ 
 \noalign{\vskip-6\jot}   
 \rm{(i)} \qquad    &\HAgK\text{ is an atlas for $M$ such that $\HAgK_S$ is 
                     shrinkable}\\
 \noalign{\vskip-1\jot} 
                    &\text{and has finite multiplicity}\;.\\ 
 \noalign{\vskip-1\jot} 
 \rm{(ii)} \qquad   &\rho\in C^\HAiy\bigl((M,(0,\HAiy)\bigr)
                     \text{ satisfies (\ref{HAN.sd})(iii) and (iv)}\;.\\  
 \noalign{\vskip-1\jot} 
 \rm{(iii)} \qquad  &\HAka_*g\geq\rho_\HAka^2g_m/c\;, 
                     \ \ \HAka\in\HAgK_S\;.\\ 
 \noalign{\vskip-1\jot} 
 \rm{(iv)} \qquad   &\|\HApl^ki_\HAka\|_\HAiy\leq c(k)\rho_\HAka\;, 
                     \ \ \HAka\in\HAgK_S\;,
                     \ \ k\geq1\;,
 \HAeal
\HAeeq 
where $\HApl$~denotes the Fr\'echet derivative. Clearly, $\rho$~is~a 
\emph{\hbox{p-r} singularity function for~$\HAMg$ on}~$S$ if there exists 
an atlas~$\HAgK$ such that $\HArgK$~is a 
\hbox{p-r} singularity datum for~$\HAMg$ on~$S$. 
\begin{lemma}\label{HAlem-E.i} 
Suppose $\HArgK$ is~a 
\hbox{p-r} singularity datum for~$\HAMg$ on~$S$. Then it is a singularity 
datum for~$\HAMg$ on~$S$. 
\end{lemma}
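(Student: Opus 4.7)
The plan is to verify, one by one, the six conditions of~(\ref{HAN.sd}) using the embedding data. Four of them are essentially immediate: (i) and the metric-independent parts~(iii), (iv) are already contained in~(\ref{HAE.pr})(ii), and the atlas-shrinkability and finite-multiplicity requirements of~(ii) are contained in~(\ref{HAE.pr})(i). What remains is the metric comparison~(v), the smoothness bound~(vi), and the coordinate-change estimates~(\ref{HAN.KS})(ii) needed to complete condition~(ii).

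The starting point is the identity $\HAka_*g = i_\HAka^* g_n$, or in coordinates $(\HAka_*g)_{ij}(x) = \HApl_i i_\HAka(x)\cdot\HApl_j i_\HAka(x)$ (Euclidean dot product), which follows from $g=\HAia_M^*g_n$. Combined with $\|\HApl i_\HAka\|_\HAiy\leq c\rho_\HAka$ from~(\ref{HAE.pr})(iv) at $k=1$, this yields the upper bound $\HAka_*g\leq c\rho_\HAka^2 g_m$; the lower bound is~(\ref{HAE.pr})(iii), so together they give~(v). Leibniz-expanding $\HApa(\HAka_*g)_{ij}$ as a sum of products of derivatives of $i_\HAka$ and applying~(\ref{HAE.pr})(iv) to each factor yields $\|\HApa(\HAka_*g)\|_\HAiy\leq c(|\HAal|)\rho_\HAka^2$, which is~(vi).

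The real work is the coordinate-change estimate. For $\HAka,\tilde\HAka\in\HAgK_S$ with $U_{\HAcoU\HAka}\cap U_{\HAcoU\tilde\HAka}\neq\HAes$ set $f:=\tilde\HAka\circ\HAka^{-1}$. The identity $i_\HAka=i_{\tilde\HAka}\circ f$ gives $\HApl i_\HAka=\bigl((\HApl i_{\tilde\HAka})\circ f\bigr)\cdot\HApl f$, and since $\HApl i_{\tilde\HAka}$ is injective with $(\HApl i_{\tilde\HAka})^T\HApl i_{\tilde\HAka}=\tilde\HAka_*g$, one solves via the Moore--Penrose left inverse:
\[
\HApl f=\bigl((\tilde\HAka_*g)^{-1}(\HApl i_{\tilde\HAka})^T\bigr)\circ f\,\cdot\,\HApl i_\HAka.
\]
At order zero, the just-proven~(v) gives $(\tilde\HAka_*g)^{-1}\sim\rho_{\tilde\HAka}^{-2}g_m$, and combined with the bounds on $\HApl i_\HAka$ and $\HApl i_{\tilde\HAka}$ from~(\ref{HAE.pr})(iv) and the overlap comparison $\rho_\HAka\sim\rho_{\tilde\HAka}$ (which follows from~(\ref{HAN.sd})(iv), already in hand), one obtains $\|\HApl f\|_\HAiy\leq c$. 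Higher-order bounds $\|\HApa f\|_\HAiy\leq c(|\HAal|)$ follow by induction on~$|\HAal|$: differentiating the displayed formula, derivatives of the composite factors $(\tilde\HAka_*g)^{-1}\circ f$ and $(\HApl i_{\tilde\HAka})\circ f$ bring in lower-order derivatives of $f$ (controlled by induction), derivatives of $\tilde\HAka_*g$ and $i_{\tilde\HAka}$ (controlled by the already-established~(vi) and by~(\ref{HAE.pr})(iv)), and iterated use of $\HApl A^{-1}=-A^{-1}(\HApl A)A^{-1}$ gives $\|\HApa(\tilde\HAka_*g)^{-1}\|_\HAiy\leq c(|\HAal|)\rho_{\tilde\HAka}^{-2}$. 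Since charts are normalized, $\|f\|_\HAiy\leq 1$ is automatic, completing~(\ref{HAN.KS})(ii).

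The main obstacle is the inductive control of $\HApa f$; the Moore--Penrose formula is the crucial observation that reduces this to bounds already in hand, and it is precisely the reason why the parametrization regularity in~(\ref{HAE.pr})(iv) is strong enough to imply the full singularity datum.
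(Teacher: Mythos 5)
Your proposal is correct and follows essentially the same route as the paper: conditions (\ref{HAN.sd})(v),(vi) from $\HAka_*g=(\HApl i_\HAka)^\top\HApl i_\HAka$ together with (\ref{HAE.pr})(iii),(iv), and the coordinate-change bounds from the Moore--Penrose left inverse $[\tilde\HAka_*g]^{-1}(\HApl i_{\tilde\HAka})^\top$ applied to $\HApl i_\HAka$, which is precisely the operator $\HALda_{\tilde\HAka}$ in the paper's proof. The only cosmetic difference is that the paper derives the identity $\HApl(\tilde\HAka\circ\HAka^{-1})=\HALda_{\tilde\HAka}\HApl i_\HAka$ via the tangential map, whereas you obtain it by differentiating $i_\HAka=i_{\tilde\HAka}\circ(\tilde\HAka\circ\HAka^{-1})$; your explicit induction for the higher-order bounds fills in a step the paper leaves implicit.
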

\begin{proof} 
(1) 
In the following, we identify a linear map 
\HAhb{a\HAsco\HABR^m\HAra\HABR^n} with its representation matrix 
\HAhb{[a]\in\HABR^{n\times m}} with respect to the standard bases. Then 
\HAbeq \label{HAE.ig} 
\HAka_*g=\HAka_*(\HAia_M^*g_n)=i_\HAka^*g_n 
=(\HApl i_\HAka)^\top\HApl i_\HAka 
\HAqa \HAka\in\HAgK\;. 
\HAeeq 
From this and (\ref{HAE.pr})(iii) and~(iv) it follows 
\HAbeq \label{HAE.grg} 
\HAka_*g\sim\rho_\HAka^2g_m 
\HAqb \|\HAka_*g\|_{k,\HAiy}\leq c(\HAka)\rho_\HAka^2  
\HAqa \HAka\in\HAgK_S 
\HAqb k\in\HABN\;. 
\HAeeq 
Hence $[\HAka_*g]$~has its spectrum in 
\HAhb{[\rho_\HAka^2/c,\ c\rho_\HAka^2]\HAis\HABR} for 
\HAhb{\HAka\in\HAgK_S}. Consequently, the spectrum of~$[\HAka_*g]^{-1}$ 
is contained in 
\HAhb{[\rho_\HAka^{-2}/c,\ c\rho_\HAka^{-2}]} for 
\HAhb{\HAka\in\HAgK_S}. This implies 
$$ 
\|[\HAka_*g]^{-1}\|_\HAiy\leq c/\rho_\HAka^2 
\HAqa \HAka\in\HAgK_S\;. 
$$ 
Thus, by the chain rule and (\ref{HAE.grg}), it follows 
\HAbeq \label{HAE.gink} 
\|[\HAka_*g]^{-1}\|_{k,\HAiy}\leq c(k)\rho_\HAka^{-2} 
\HAqa \HAka\in\HAgK_S 
\HAqb k\in\HABN\;. 
\HAeeq 

\par 
(2) 
We set 
$$ 
\HALda_\HAka(x):=[\HAka_*g]^{-1}(\HApl i_\HAka)^\top\in\HABR^{m\times n} 
\HAqa x\in Q_\HAka^m 
\HAqb \HAka\in\HAgK_S\;. 
$$ 
Then (\ref{HAE.pr})(iv) and (\ref{HAE.gink}) imply 
\HAbeq \label{HAE.L} 
\|\HALda_\HAka\|_{k,\HAiy}\leq c(k)\rho_\HAka^{-1} 
\HAqa \HAka\in\HAgK_S 
\HAqb k\in\HABN\;. 
\HAeeq 
Given 
\HAhb{\HAka\in\HAgK_S} and 
\HAhb{p\in U_{\HAcoU\HAka}}, 
\HAbeq \label{HAE.T} 
T_pM=\{p\}\times\HApl i_\HAka\bigl((\HAka(p)\bigl)\HARm 
\HAhr\{p\}\times\HABR^n=T_p\HABR^n\;. 
\HAeeq 
We read off (\ref{HAE.ig}) that $\HALda_\HAka$~is a left inverse 
for~$\HApl i_\HAka$. Furthermore, 
$$ 
\ker(\HALda_\HAka)=\ker\bigl((\HApl i_\HAka)^\top\bigr) 
=\bigl(\HAim(\HApl i_\HAka)\bigr)^\bot\;. 
$$ 
It follows from this and (\ref{HAE.T}) that~$T_p\HAka$, the tangential 
of~$\HAka$ at~$p$, is given by 
$$ 
T_p\HAka\HAsco T_pM\HAra T_{\HAka(p)}\HABR^m 
\HAqb (p,\xi)\HAmt\bigl(\HAka(p),\HALda_\HAka(\HAka(p))\xi\bigr) 
$$ 
(cf.\ \cite[Remark~10.3(d)]{AmE06a}). Thus we find 
\HAhb{\HApl(\tilde\HAka\circ\HAka^{-1})=\HALda_{\tilde\HAka}\HApl i_\HAka} 
for 
\HAhb{\HAka,\tilde\HAka\in\HAgK_S} with 
\HAhb{U_{\HAcoU\HAka}\cap U_{\HAcoU\tilde\HAka}\neq\HAes}. Hence 
(\ref{HAE.pr})(iv) and (\ref{HAE.L}) imply 
$$ 
\|\tilde\HAka\circ\HAka^{-1}\|_{k,\HAiy}\leq c(k) 
\HAqa \HAka,\tilde\HAka\in\HAgK_S 
\HAqb k\in\HABN\;, 
$$ 
due to 
\HAhb{\HAim(\tilde\HAka\circ\HAka^{-1})\HAis Q^m}. Thus, recalling 
(\ref{HAE.pr})(i), we see that $\HAgK$~is uniformly regular on~$S$. 
This proves the claim. 
\qed 
\end{proof} 
In the next lemma we consider a particularly simple, but important, 
\hbox{p-r} regular singularity datum. In this special situation it is 
the converse of the preceding lemma. 
\begin{lemma}\label{HAlem-E.c} 
Suppose 
\HAhb{S\HAis M} and $S$~is compact in~$\HABR^n$. If $(M,\HAgK,g)$ is 
uniformly regular on~$S$, then $(\HAmf{1},\HAgK)$ is~a 
\hbox{p-r} singularity datum for $\HAMg$ on~$S$. 
\end{lemma}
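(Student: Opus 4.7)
The plan is to verify all four conditions of (\ref{HAE.pr}) with $\rho=\HAmf{1}$, so that $(\HAmf{1},\HAgK)$ qualifies as a \hbox{p-r} singularity datum for $\HAMg$ on~$S$. Conditions (i)--(iii) should come essentially for free from the hypotheses: (i) is part of the definition of uniform regularity of $\HAgK$ on~$S$; (ii) is trivial because $\HAka_*\HAmf{1}=\HAmf{1}$ and $\rho_\HAka=1$, so both (\ref{HAN.sd})(iii) and (iv) hold with the constants $c(k)=1$; and (iii) follows directly from the uniform regularity of $g$, which gives $\HAka_*g\sim g_m$, hence $\HAka_*g\geq g_m/c$ on $Q_\HAka^m$, uniformly in $\HAka\in\HAgK_S$.

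The substantive work lies in verifying~(iv), namely $\|\HApa i_\HAka\|_\HAiy\leq c(k)$ uniformly for $\HAka\in\HAgK_S$ and $k\geq 1$. The first-order bound is immediate from (\ref{HAE.ig}): since $\HAka_*g=(\HApl i_\HAka)^\top\HApl i_\HAka$ and $|\HAka_*g|\leq c$, each column of $\HApl i_\HAka$ has squared Euclidean length bounded by a diagonal entry of $\HAka_*g$, so $\|\HApl i_\HAka\|_\HAiy\leq c$ uniformly.

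For $k\geq 2$, I would invoke the Gauss decomposition in local coordinates,
$$\HApl_l\HApl_m i_\HAka=\HAGa^a_{lm}\,\HApl_a i_\HAka+N_{lm},$$
where $\HAGa^a_{lm}$ are the Christoffel symbols of $\HAka_*g$ and $N_{lm}$ is the normal part of $\HApl^2 i_\HAka$ (the second fundamental form of the embedding). The uniform regularity of $g$ gives uniform bounds on $\HAka_*g$, on $(\HAka_*g)^{-1}$ (the latter since $\HAka_*g\geq g_m/c$), and on all their derivatives; hence the Christoffels and all their iterated derivatives are uniformly bounded in $\HAka\in\HAgK_S$. Combined with the $k=1$ bound, this controls the tangential part inductively at every order, and an induction on $k$ together with the chain rule reduces (iv) to uniform bounds on the normal parts $N_{lm}$ and their iterated covariant derivatives.

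The main obstacle is precisely these normal parts, as they reflect the extrinsic geometry of $M\HAhr\HABR^n$ and are not controlled by the intrinsic metric alone. To handle them I would exploit the compactness of~$S$ in $\HABR^n$ as follows. By the normalization of $\HAka\in\HAgK_S$ and $\HAka_*g\sim g_m$, each $U_{\HAcoU\HAka}$ has bounded $g$-diameter, and since $g=\HAia_M^*g_n$ implies $d_{\HABR^n}\leq d_g$, each $U_{\HAcoU\HAka}$ lies in a bounded Euclidean neighborhood of~$S$. Because $S$ is compact in $\HABR^n$ and $\HAia_M$ is a smooth embedding, a small enough Euclidean neighborhood of $S$ meets $M$ in a relatively compact subset $N\HAis M$. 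Refining $\HAgK$ on~$S$ via the construction (\ref{HAP.M}) with sufficiently small $\HAda$ produces an equivalent atlas whose chart domains all lie in~$N$; equivalence means verifying~(iv) for the refinement is equivalent to verifying it for $\HAgK$. On the compact set~$N$ the second fundamental form and all its covariant derivatives are bounded by continuity, which provides the missing uniform bounds on $N_{lm}$ and closes the induction, completing the proof.
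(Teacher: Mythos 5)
Your proof is correct, but for the substantive condition (\ref{HAE.pr})(iv) it follows a genuinely different and considerably heavier path than the paper. The paper never touches the Gauss formula: for each $p\in S$ it chooses a normalized ambient chart $\HAvp_p$ of $\HABR^n$ adapted to~$M$, so that $\HAka_p:=\HAvp_p\HAsn(M\cap U_{\HAvp_p})$ is a normalized chart of~$M$ and $i_{\HAka_p}$ is literally the restriction of $\HAvp_p^{-1}$; compactness of $S$ in $\HABR^n$ yields a finite subfamily covering~$S$, so $\|\HApl^k i_{\HAka_p}\|_\HAiy\leq c(k)$ holds trivially, and the estimate is transported to $\HAgK_S$ through the uniformly bounded transition maps of the resulting equivalent atlas (uniqueness of the structure, Lemma~\ref{HAlem-U.C}). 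Your route---first-order bound from $\HAka_*g=(\HApl i_\HAka)^\top\HApl i_\HAka$, Gauss decomposition at higher orders, intrinsic control of the Christoffel part via (\ref{HAN.Rr}), and compactness of a relatively compact neighborhood $N$ of $S$ in $M$ to bound the second fundamental form and its covariant derivatives---correctly isolates the one quantity (the extrinsic curvature) that the intrinsic hypotheses cannot control, and the induction does close. Two points deserve tightening: the existence of $N$ should be justified by the local closedness of the embedded submanifold $M$ in $\HABR^n$, which gives $\HAdist(S,\bar M\HAssm M)>0$ and hence that a sufficiently small Euclidean neighborhood of $S$ has compact closure inside~$M$; and the passage from estimates for the refined atlas back to estimates for $\HAgK_S$ over the full chart domains merits a word (the paper elides the same point). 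What your approach buys is a template for a noncompact version of the lemma, with compactness of $S$ replaced by uniform bounds on the second fundamental form and its covariant derivatives near~$S$; what the paper's approach buys is brevity.
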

\begin{proof} 
Due to the hypotheses, conditions (\ref{HAE.pr})\hbox{(i)--(iii)} are 
trivially satisfied with 
\HAhb{\rho=\HAmf{1}}. 

\par 
For each 
\HAhb{p\in S} there is a normalized local chart~$\HAvp_p$ 
for~$\HABR^n$ such that 
\HAhb{\HAvp_p(p)=0}, 
\HAbeq \label{HAE.kp} 
\|\HAvp_p^{-1}\|_{k,\HAiy}\leq c(k,p) 
\HAqa k\in\HABN\;, 
\HAeeq 
and 
\HAhb{\HAka_p:=\HAvp_p\HAsn(M\cap U_{\HAvp_p})} is a normalized local chart 
for~$M$ with 
\HAhb{\HAka_p(p)=0\in\HABR^m}. By the compactness of~$S$ in~$\HABR^n$ 
there exists a finite subset~$P$ of~$S$ such that 
\HAhb{\bigl\{\,U_p:=\HAdom(\HAka_p)\ ;\ p\in P\,\bigr\}} is an open covering 
of~$S$ in~$M$. We set 
\HAhb{\hat\HAgK:=\{\,\HAka_p\ ;\ p\in P\,\}} and 
\HAhb{\tilde\HAgK:=\hat\HAgK\cup(\HAgK\HAssm\HAgK_S)}. Then 
$\tilde\HAgK$~is an atlas for~$M$ and 
\HAhb{\tilde\HAgK_S=\hat\HAgK}. For 
\HAhb{p\in P} we define 
\HAhb{f_p:=\HAvp_p^{-1}\HAsco Q_{\HAvp_p}^n\HAra\HABR^n}. Then 
\HAhb{i_{\HAka_p}=f_p\HAsn Q_{\HAka_p}}, where $\HABR^m$~is identified with 
the subspace 
\HAhb{\HABR^m\times\{0\}} of~$\HABR^n$, of course. Since $\tilde\HAgK_S$~is 
finite, it is obvious from (\ref{HAE.kp}) that 
\HAbeq \label{HAE.KSk} 
\|\HApl^ki_\HAka\|_\HAiy\leq c(k) 
\HAqa \HAka\in\tilde\HAgK_S 
\HAqb k\geq1\;. 
\HAeeq 
By the same reason, and since $\HAgK$~has finite multiplicity on~$S$, 
we see that 
\HAhb{\smash{\tilde\HAgK\HAapproxS\HAgK}\HAsmtB}. Hence (\ref{HAE.KSk}) 
holds for~$\HAgK_S$ as well, that is, condition (\ref{HAE.pr})(iv) is valid 
also.  
\qed 
\end{proof} 
Now we return to the setting of the preceding section. It follows from 
Corollary~\ref{HAcor-F.g} and Proposition~\ref{HApro-F.P} that, given 
\HAhb{R\in\HAcC(J)} or 
\HAhb{R\in\HAcF(J_\HAiy)}, the \hbox{$R$-pipe} 
\HAhb{P=P(R,B)} can be equipped with countably many nonequivalent metrics 
which make it into a cofinally uniformly regular Riemannian manifold. 
However, since 
\HAhb{\HAia_P\HAsco P\HAhr\HABR^{1+\bar d}}, it is most natural to 
endow~$P$ with the metric 
\HAhb{g_P:=\HAia_P^*g_{1+\bar d}}. The following proposition gives 
sufficient conditions guaranteeing that $g$ in Proposition~\ref{HApro-F.P} 
can be replaced by~$g_P$. 
\begin{proposition}\label{HApro-E.P} 
Suppose $(B,g_B)$ is~a \hbox{$d$-dimensional} bounded Riemannian 
submanifold of $(\HABR^{\bar d},g_{\bar d})$ and $b$~is a 
\hbox{p-r} singularity function for $(B,g_B)$ on 
\HAhb{S\HAis B}. Also suppose 
\HAhb{R\in\HAcC(J)} or 
\HAhb{R\in\HAcF(J_\HAiy)} and define~$r$ by~(\ref{HAC.r0}). Then 
\HAhb{p=\HAvp^*(R\otimes b)} is a cofinal 
\hbox{p-r} singularity function for $(P,g_P)$ on~$S$. 
\end{proposition}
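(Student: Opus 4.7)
The strategy is to construct an explicit atlas $\mathfrak{K}$ on $P$ by an asymmetric product refinement of the $R$-atlas on $\mathring{J}$ and a p-r atlas on $B$, then verify the p-r singularity datum conditions~(\ref{HAE.pr}) for $(p,\mathfrak{K})$, and conclude via Lemma~\ref{HAlem-E.i}. As a preparatory observation, $(r,\mathfrak{S}[R])$ is itself a p-r singularity datum for $(\mathring{J},\D t^2)$ on $I_2$: the local parametrizations equal $i_{\sigma_j}=\tau_j$, the bounds $|\tau_j^{(k)}|\le c(k)\,r_{\sigma_j}$ for $k\ge 1$ follow from (\ref{HAC.t}) and (\ref{HAC.dkrr}) in the cusp case and from (\ref{HAC.tjk}) in the funnel case (where $r_{\sigma_j}=1$), giving~(\ref{HAE.pr})(iv), while $\dot\tau_j^2\sim r_{\sigma_j}^2$ together with (\ref{HAC.sg}) yields~(\ref{HAE.pr})(iii).

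Fix a p-r singularity datum $(b,\mathfrak{K}_B)$ for $(B,g_B)$ on $S$. Mimicking step~(1) of the proof of Theorem~\ref{HAthm-P.MM}, choose $\delta>0$ small and, for each pair $(\sigma_j,\kappa_B)\in\mathfrak{S}[R]_{I_2}\times\mathfrak{K}_{B,S}$, put $\delta_{\kappa_B}:=\min(b_{\kappa_B},\delta)$. This refinement scale depends only on $\kappa_B$, so the refinement is carried out in the $s$-direction alone; the asymmetry (contrast Theorem~\ref{HAthm-P.MM}) is dictated by the warping in the definition of $P$, since the ambient embedding already scales the $x$-direction by the factor $R(\tau_j)\sim r_{\sigma_j}$. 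Form the product charts $\mu:=[(\lambda\circ\sigma_j)\times\kappa_B]\circ\varphi$ with $\lambda\in\mathfrak{L}(\delta_{\kappa_B},Q)$, completed outside the singular region, to obtain an atlas $\mathfrak{K}$ on $P$. Condition~(\ref{HAE.pr})(i) (finite multiplicity, shrinkability, uniform regularity of transition maps) follows as in Theorem~\ref{HAthm-P.MM}, using the boundedness of $b$ to control $\delta_{\kappa_B}/\delta_{\kappa_{B'}}$ on overlaps; (\ref{HAE.pr})(ii) for $\rho=p$ is immediate from the product structure of $p=\varphi^*(R\otimes b)$ together with $R\sim\mathbf{1}$ in the funnel case.

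The heart of the argument is the verification of~(\ref{HAE.pr})(iii) and~(iv) for the parametrization
\[
i_\mu(s,x)=\bigl(\tau_j(\delta_{\kappa_B}(s+z_1)),\;R(\tau_j(\delta_{\kappa_B}(s+z_1)))\,i_{\kappa_B}(x)\bigr).
\]
For~(iv), each $s$-derivative brings down a factor $\delta_{\kappa_B}\le b_{\kappa_B}$ together with either $|\tau_j^{(\ell)}|\le c\,r_{\sigma_j}$ or a bounded derivative of $R$, each $x$-derivative yields a factor $|\partial^{|\beta|}i_{\kappa_B}|\le c\,b_{\kappa_B}$, and $|i_{\kappa_B}|\le\mathrm{diam}(B)$ is bounded; the chain rule then gives $\|\partial^\alpha i_\mu\|_\infty\le c(|\alpha|)\,r_{\sigma_j}b_{\kappa_B}\sim c(|\alpha|)\,\rho_\mu$ for every $\alpha\ne 0$. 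For~(iii), the lower-triangular block structure of $\partial i_\mu$ together with Cauchy--Binet yields
\[
\det(\mu_*g_P)=\delta_{\kappa_B}^2\,\dot\tau_j^2\,R(\tau_j)^{2d}\,\det(\kappa_{B*}g_B)\;\gtrsim\;b_{\kappa_B}^2\,r_{\sigma_j}^{2(d+1)}\,b_{\kappa_B}^{2d}=\rho_\mu^{2(1+d)},
\]
where $\delta_{\kappa_B}^2\gtrsim b_{\kappa_B}^2$ uses the upper bound on $b$; combined with $\operatorname{tr}(\mu_*g_P)\lesssim\rho_\mu^2$ (by the same estimates as in~(iv)), this forces $\lambda_{\min}(\mu_*g_P)\gtrsim\rho_\mu^2$, hence $\mu_*g_P\ge c\,\rho_\mu^2\,g_{1+d}$. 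Lemma~\ref{HAlem-E.i} then promotes $(p,\mathfrak{K})$ to a singularity datum, so $p$ is a cofinal p-r singularity function for $(P,g_P)$ on $S$. The main obstacle is the multi-index bookkeeping in~(iv) at orders $k\ge 2$, where the Faà di Bruno formula produces a combinatorial sum of terms of the form $\delta_{\kappa_B}^a\prod_\ell\tau_j^{(k_\ell)}\cdot R^{(n)}(\tau_j)\cdot\partial^{|\beta|}i_{\kappa_B}$, each of which must respect the single target scale $c(k)\,\rho_\mu$; the saving is that any power of $\delta_{\kappa_B}$ beyond the first is $\le 1$, each $|\tau_j^{(\ell)}|$ contributes only one factor of $r_{\sigma_j}$ and each $|\partial^{|\beta|}i_{\kappa_B}|$ only one factor of $b_{\kappa_B}$, and all remaining factors are uniformly bounded.
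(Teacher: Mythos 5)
Your overall strategy---build a refined product atlas from the $R$-atlas on $\HAci J$ and a p-r atlas on $B$, verify (\ref{HAE.pr}) for the parametrizations $i_\mu=\bigl(\tau_j,R(\tau_j)\,i_{\kappa_B}\bigr)\circ(\text{refinement})$, and invoke Lemma~\ref{HAlem-E.i}---is exactly the paper's, and your treatment of the cusp case $R\in\HAcC(J)$ is sound; your determinant-plus-trace route to (\ref{HAE.pr})(iii) is a legitimate variant of the paper's direct Cauchy--Schwarz estimate, though it still rests on the first-order bound from~(iv).

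The gap is in the funnel case. You refine only in the $s$-direction, on the grounds that ``the ambient embedding already scales the $x$-direction by the factor $R(\tau_j)\sim r_{\sigma_j}$''. That equivalence holds for $R\in\HAcC(J)$, where $r=R$ and $\rho_j\sim\rho_j(0)=r_{\sigma_j}$; but for $R\in\HAcF(J_\HAiy)$ one has $r=\HAmf{1}$, so $r_{\sigma_j}=1$, while $R(\HAiy)=\HAiy$ is admitted (e.g.\ $R=\HAsR_\alpha$ with $0<\alpha\le1$, the infinite cone---the prototypical tame end). There $\rho_\mu\sim b_{\kappa_B}$, yet the first $x$-derivative of $i_\mu$ equals $R(\tau_j)\,\partial i_{\kappa_B}$, of size $\sim R(t_j)\,b_{\kappa_B}$, which is unbounded relative to $c\,\rho_\mu$; hence (\ref{HAE.pr})(iv) fails for your atlas, and with it the trace bound you use to extract $\lambda_{\min}$ in~(iii) and, through Lemma~\ref{HAlem-E.i}, the uniform control of the transition maps. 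The paper repairs exactly this point in step~(5) of its proof: when $R\in\HAcF(J_\HAiy)$ it additionally refines the $B$-direction charts at the $j$-dependent scale $\delta_j:=\min\{1/R(t_j),\delta\}$, so that each $x$-derivative gains a factor $\delta_j\sim1/\rho_j(0)$ (cf.\ (\ref{HAE.damY})) and $\|\HApa i_\upi\|_\HAiy\le c(\alpha)\,p_\upi$ is restored. Your argument becomes complete once you add this second refinement in the funnel case.
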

\begin{proof} 
(1) 
Let $\HAgB$ be an atlas for~$B$ such that $(b,\HAgB)$ is~a 
\hbox{p-r} singularity datum for $(B,g)$ on~$S$, and let $\HAgS$ be the 
\hbox{$R$-atlas} for~$\HAci J$. We write 
$$ 
f:=\HAia_P\circ\HAvp^{-1}\HAsco\HAci J\times B\HAra\HABR^{1+\bar d} 
\HAqb Y_\HAba:=i_\HAba\;, 
$$ 
and use the notations of Section~\ref{HAsec-C}. Then 
$$ 
f_{j\HAba}:=f\circ(\utau_j\times\HAba^{-1})=(\utau_j,\rho_jY_\HAba) 
\HAsco Q\times\HAba(U_\HAba)\HAra\HABR^{1+\bar d} 
$$ 
is a diffeomorphism onto an open subset~$U_{j\HAba}$ of~$P$. 
We denote by~$\varpi$ the permutation 
\HAhb{\HABR^{1+d}\HAra\HABR^{d+1}}, 
\ \HAhb{(t,y)\HAmt(y,t)} (which is only needed if 
\HAhb{\HApl B=\HAes}). Then, see~(\ref{HAF.ph}), 
$$ 
\HAka_{j\HAba}:=\varpi\circ f_{j\HAba}^{-1}(\HAba,\utau_j) 
\HAsco U_{j\HAba}\HAra\HAba(U_\HAba)\times Q 
$$ 
is a local chart for~$P$ and 
\HAhb{f_{j\HAba}=i_{\HAka_{j\HAba}}}. We set 
$$ 
\HAgK:=\{\,\HAka_{j\HAba}\ ;\ j\geq1,\ \HAba\in\HAgB\,\} 
=\HAvp^*\bigl(\varpi\circ(\HAgB\otimes\HAgS)\bigr) 
$$ 
where 
\HAhb{\HAgB\otimes\HAgS} is the product atlas on 
\HAhb{B\times\HAci J} and 
$$ 
\varpi\circ(\HAgB\otimes\HAgS) 
:=\{\,\HAsa\times\HAba\ ;\ \HAba\in\HAgB,\ \HAsa\in\HAgS\,\}\;. 
$$ 
By Lemma~\ref{HAlem-C.CF} we know that $\HAgS$~is uniformly regular on 
\HAhb{I:=I_2[R]}. Hence 
\HAhb{\HAgB\otimes\HAgS} is uniformly regular on 
\HAhb{S\times I} by Theorem~\ref{HAthm-P.MM}. From this and 
Lemma~\ref{HAlem-P.f} it follows that $\HAgK$~is a uniformly regular atlas 
for~$P$ on 
\HAhb{V:=\HAvp^{-1}(I\times S)}. 

\par 
(2) 
Given 
\HAhb{\HAka=\HAka_{j\HAba}\in\HAgK}, 
$$ 
\HAbal 
\HAka_*g_P 
&=\HAka_*\HAia_P^*g_{1+\bar d}=(\HAia_P\circ\HAka^{-1})^*g_{1+\bar d}\\
&=f_{j\HAba}^*(\D t^2+|\D y|^2)=\D\utau_j^2+|\D (\rho_jY_\HAba)|^2\;. 
\HAeal 
$$ 
Hence 
\HAhb{\D(\rho_jY_\HAba)=\HAthrho_j\,\D s\,Y_\HAba+\rho_j\,\D Y_\HAba} implies 
$$ 
\HAka_*g_P=(\HAthtau_j^2+\HAthrho_j^2\,|Y_\HAba|^2)\,\D s^2 
+2\rho_j\HAthrho_j\,\D s\,(Y_\HAba\HAsn\D Y_\HAba) 
+\rho_j^2\,|\D Y_\HAba|^2\;. 
$$ 
Using 
\HAhb{|\D Y_\HAba|^2=\HAba_*g_B} and estimating the next to the last 
term by the Cauchy-Schwarz inequality gives 
$$ 
\HAka_*g_P\geq(\HAthtau_j^2+(1-1/\HAve)\HAthrho_j^2\,|Y_\HAba|^2)\,\D s^2 
+(1-\HAve)\rho_j^2\HAba_*g_B 
\HAnpb 
$$ 
for 
\HAhb{0<\HAve<1}, 
\ \HAhb{j\geq1}, and 
\HAhb{\HAba\in\HAgB}. 

\par 
(3) 
Suppose 
\HAhb{R\in\HAcC(J)}. Then 
\HAbeq \label{HAE.tr} 
\HAthtau_j^2=\rho_j^2 
\HAeeq 
by (\ref{HAC.t}), and 
\HAhb{\HAthrho_j^2\leq c\rho_j^2} by (\ref{HAC.rrj}) and~(\ref{HAC.dkrj}). 
Thus the boundedness of~$B$ in $(\HABR^{\bar d},g_{\bar d})$ implies 
that we can choose~$\HAve$ sufficiently close to~$1$ such that 
\HAbeq \label{HAE.gC} 
\HAka_*g_B\geq\rho_j^2(\D s^2+\HAba_*g_B)/c 
\geq\rho_j^2(\D s^2+b_\HAba^2g_d)/c 
\HAeeq 
for 
\HAhb{j\geq1}, 
\ \HAhb{\HAba\in\HAgB_S}, and  
\HAhb{\HAka=\HAka_{j\HAba}}, where the last inequality holds since 
$(b,\HAgK)$ is~a 
\hbox{p-r} singularity datum for $(B,g_B)$ on~$S$. 

\par 
(4)   
Assume 
\HAhb{R\in\HAcF(J_\HAiy)}. Then (\ref{HAC.dtC}) implies 
$$ 
\HAthtau_j\sim\HAmf{1} 
\HAqa j\geq1\;. 
$$ 
From this and 
$$ 
\HAthrho_j=(\HAthR\circ\utau_j)\HAthtau_j 
\HAqa \|\HAthR\|_{k,\HAiy}<\HAiy  
\HAqb k\in\HABN\;, 
$$ 
we get 
\HAbeq \label{HAE.rdj} 
\|\HAthrho_j\|_{k,\HAiy}\leq c(k) 
\HAqa j\geq1 
\HAqb k\in\HABN\;. 
\HAeeq 
Thus, similarly as above, 
\HAbeq \label{HAE.gF} 
\HAka_*g_B\geq\bigl(\D s^2+\rho_j^2(0)b_\HAba^2g_d\bigr)\big/c 
\HAqa j\geq1  
\HAqb \HAba\in\HAgB_S 
\HAqb \HAka=\HAka_{j\HAba}\;. 
\HAeeq 

\par 
(5) 
Now we proceed analogously to the proof of Theorem~\ref{HAthm-P.MM}. 
Recalling that $\HAgS$~is shrinkable to 
\HAhb{1/2} on~$I$ we fix
\HAhb{r\in(1/2,1)} such that 
\HAhb{\bigl\{\,\HAka^{-1}(rQ_\HAka^{d+1})\ ;\ \HAka\in\HAgK_V\,\bigr\}} 
is a covering of~$V$. Then we set 
\HAhb{\HAda:=(1-r)\big/\sqrt{d+1}}, 
$$ 
\HAda_\HAba:=\min\{b_\HAba,\HAda\} 
\HAqb \HAda_j:=\min\bigl\{1/R(t_j),\ \HAda\bigr\}\,, 
$$ 
and 
$$ 
\HAgL_\HAba:=\HAgL(\HAda_\HAba,Q) 
\HAqb \HAgL_j:=\HAgL(\HAda_j,Q_\HAba^d) 
$$ 
for 
\HAhb{\HAba\in\HAgB_S} and 
\HAhb{j\geq1}. Note that the boundedness of~$b$ implies 
\HAbeq \label{HAE.db} 
\HAda_\HAba\sim b_\HAba 
\HAqa \HAba\in\HAgK_S\;. 
\HAeeq 
Furthermore, 
\HAbeq \label{HAE.drF} 
\HAda_j\sim1/\rho_j(0) 
\HAqb j\geq1\;,\quad\text{if }r\in\HAcF(J_\HAiy)\;, 
\HAnpb 
\HAeeq 
since 
\HAhb{R(t_j)=\rho_j(0)} and 
\HAhb{1/R\leq c} in this case. 

\par 
Given 
\HAhb{\HAka=\HAka_{j\HAba}\in\HAgK_V}, we define 
$$ 
\HAgN_\HAka:= 
 \left\{ 
 \HAbal 
 {}
 &\{\,\umu\times\HAlda 
  \ ;\ \HAlda\in\HAgL_\HAba\;,\ \umu:=\HAid_{Q_\HAba^d}\,\}      
  &\quad \text{if }&R\in\HAcC(J)\;,\\ 
 &\{\,\umu\times\HAlda\ ;\ \HAlda\in\HAgL_\HAba\;,\ \umu\in\HAgL_j\,\}       
  &\quad \text{if }&R\in\HAcF(J_\HAiy)\;. 
 \HAeal 
 \right. 
$$ 
Then $\HAgN_\HAka$~is an atlas for~$Q_\HAka^{d+1}$ which is 
uniformly regular on~$rQ_\HAka^{d+1}$. Consequently, cf.~(\ref{HAP.M}),
$$ 
\HAgP:=\{\,\unu\circ\HAka\ ;\ \HAka\in\HAgK_V\;,\ \unu\in\HAgN_\HAka\,\} 
\cup(\HAgK\HAssm\HAgK_V) 
$$ 
is an atlas for~$P$ which is uniformly regular on~$V$. Observe 
$$ 
\HAgP_V\HAis\{\,\unu\circ\HAka 
\ ;\ \HAka\in\HAgK_V\;,\ \unu\in\HAgN_\HAka\,\}\;. 
\HAnpb 
$$ 
Hence condition~(\ref{HAE.pr})(i) is satisfied. 

\par 
(6) 
By the assumption on $(b,\HAgB)$ 
\HAbeq \label{HAE.bb} 
\|\HAba_*b\|_{k,\HAiy}\leq c(k)b_\HAba 
\HAqb b\HAsn U_\HAba\sim b_\HAba 
\HAqa \HAba\in\HAgB_S 
\HAqb k\geq0\;. 
\HAeeq 
Furthermore, 
\HAbeq \label{HAE.rj} 
\|\rho_j\|_{k,\HAiy}\leq c(k)\rho_j(0) 
\HAqb \rho_j\HAsn J_j\sim\rho_j(0) 
\HAqa j\geq1 
\HAqb k\geq0\;.  
\HAeeq 
Indeed, if 
\HAhb{R\in\HAcC(J)}, then this is a consequence of (\ref{HAC.dkrj}) and 
(\ref{HAC.rrj}), respectively. If 
\HAhb{R\in\HAcF(J_\HAiy)}, then 
\HAhb{\rho_j(0)=R(t_j)\geq1/c} for 
\HAhb{j\geq1}. Hence (\ref{HAE.rj}) follows from~(\ref{HAE.rdj}). 

\par 
We deduce from (\ref{HAP.mr}) that 
\HAbeq \label{HAE.bp} 
b_\HAba=\HAba_*b(0)=\HAka_*b(0) 
\sim(\unu\circ\HAka)_*b(0)=\upi_*b(0) 
\HAeeq 
and 
\HAbeq \label{HAE.rp} 
\rho_j(0)=(\HAsa_j)_*R(0)=\HAka_*R(0) 
\sim(\unu\circ\HAka)_*R(0)=\upi_*R(0) 
\HAeeq 
for 
\HAhb{\upi=\unu\circ\HAka\in\HAgP_V} with 
\HAhb{\HAka=\HAka_{j\HAba}\in\HAgK_V} and 
\HAhb{\unu\in\HAgL_\HAka}. From \hbox{(\ref{HAE.bb})--(\ref{HAE.rp})} 
we derive 
$$ 
\|\upi_*p\|_{k,\HAiy}\leq c(k)p_\upi 
\HAqb p\HAsn U_\upi\sim p_\upi 
\HAqa \upi\in\HAgP_V 
\HAqb k\geq0\;. 
\HAnpb 
$$ 
Thus condition~(\ref{HAE.pr})(ii) applies. 

\par 
(7) 
From (\ref{HAE.gC}),\, (\ref{HAE.gF}),\, (\ref{HAE.bp}),\, (\ref{HAE.rp}), 
and~(\ref{HAP.mg}) we get 
$$ 
\upi_*g_P\geq\rho_j^2(0)(\HAda_\HAba^2\,\D s^2+b_\HAba^2g_d)/c 
\quad\text{if }R\in\HAcC(J)\;, 
$$ 
respectively 
$$ 
\upi_*g_P\geq\bigl(\HAda_\HAba^2\,\D s^2 
+\rho_j^2(0)\HAda_j^2b_\HAba^2g_d\bigr)\big/c 
\quad\text{if }R\in\HAcF(J_\HAiy)\;, 
$$ 
for 
\HAhb{\upi=\unu\circ\HAka\in\HAgP_V} with 
\HAhb{\HAka=\HAka_{j\HAba}} and 
\HAhb{\unu\in\HAgN_\HAka}. From this, (\ref{HAE.db}), and (\ref{HAE.drF}) we 
obtain in either case 
\HAhb{\upi_*g_P\geq p_\upi^2g_{1+d}/c} for 
\HAhb{\upi\in\HAgP_V}. Thus condition~(\ref{HAE.pr})(iii) is fulfilled. 

\par 
(8) 
By the assumption on $(p,\HAgB)$ 
\HAbeq \label{HAE.dY} 
\|\HApa Y_\HAba\|_\HAiy\leq c(\HAal)b_\HAba 
\HAqa \HAba\in\HAgB_S 
\HAqb \HAal\in\HABN^d\HAssm\{0\}\;. 
\HAeeq 
Given 
\HAhb{\upi=\unu\circ\HAka\in\HAgP_V} with 
\HAhb{\HAka=\HAka_{j\HAba}\in\HAgK_V} and 
\HAhb{\unu=\umu\times\HAlda\in\HAgN_{\HAka_{j\HAba}}}, 
\HAbeq \label{HAE.ip} 
i_\upi=i_\HAka\circ\unu^{-1} 
=\bigl(\HAlda_*\utau_j,(\HAlda_*\rho_j)\umu_*Y_\HAba\bigr)\;. 
\HAeeq 
Suppose 
\HAhb{R\in\HAcC(J)}. Then we get from (\ref{HAE.tr}) and (\ref{HAP.dg}) 
\HAbeq \label{HAE.dklt} 
\|\HApl^k(\HAlda_*\utau_j)\|_\HAiy 
=\HAda_\HAba\,\|\HApl^{k-1}(\HAlda_*\rho_j)\|_\HAiy 
=\HAda_\HAba^k\,\|\HAlda_*(\HApl^{k-1}\rho_j)\|_\HAiy 
\leq c(k)b_\HAba\rho_j(0)
\HAeeq 
for 
\HAhb{j,k\geq1} and 
\HAhb{\HAba\in\HAgB_S}, due to 
\HAhb{0<\HAda_\HAba\leq1} and (\ref{HAE.db}). By means of 
\hbox{(\ref{HAE.bp})--(\ref{HAE.dklt})} 
and  
\HAhb{\umu=\HAid} we deduce 
\HAbeq \label{HAE.daip} 
\|\HApa i_\upi\|_\HAiy\leq c(\HAal) p_\upi 
\HAqa \upi\in\HAgP_V 
\HAqb \HAal\in\HABN^{1+d}\HAssm\{0\}\;, 
\HAnpb 
\HAeeq 
if 
\HAhb{R\in\HAcC(J)}. 

\par 
Assume 
\HAhb{R\in\HAcF(J_\HAiy)}. Then (\ref{HAC.tjk}) and the definition of~$r$ 
imply, similarly as above, 
$$ 
\|\HApl^k(\HAlda_*\utau_j)\|_\HAiy 
\leq c(k) b_\HAba\leq c(k) p_\upi 
$$ 
for 
\HAhb{\upi=\unu\circ\HAka\in\HAgP_V}, 
\ \HAhb{\HAka=\HAka_{jp}}, and 
\HAhb{\unu\in\HAgN_\HAka}. Analogously, we get from~(\ref{HAE.rdj}) 
\HAbeq \label{HAE.dklj} 
\|\HApl^k(\HAlda_*\rho_j)\|_\HAiy\leq c(k) p_\upi 
\HAqa \upi=\unu\circ\HAka\in\HAgP_V 
\HAqb \HAka=\HAka_{j\HAba} 
\HAqb \unu\in\HAgN_\HAka\;, 
\HAeeq 
for 
\HAhb{k\geq1}. Finally, similar arguments invoking (\ref{HAE.dY}) lead to 
\HAbeq \label{HAE.damY} 
\|\HApl^\HAal(\umu_*Y_\HAba)\|_\HAiy\leq c(\HAal)\HAda_jp_\upi 
\HAqa \HAal\in\HABN^{1+d}\HAssm\{0\}\;, 
\HAeeq 
for 
\HAhb{\upi=\unu\circ\HAka\in\HAgP_V } with 
\HAhb{\HAka=\HAka_{j\HAba}} and 
\HAhb{\unu\in\HAgN_\HAka}. By (\ref{HAE.rdj}) 
\ \HAhb{|\rho_j(s)-\rho_j(0)|\leq c} for 
\HAhb{s\in Q} and 
\HAhb{j\geq1}. Hence 
\HAbeq \label{HAE.rrF} 
1-c/\rho_j(0)\leq\rho_j(s)/\rho_j(0)\leq 1+c/\rho_j(0) 
\HAqa s\in Q 
\HAqb j\geq1\;. 
\HAeeq 
Assume 
\HAhb{R(\HAiy)<\HAiy}. Then 
\HAhb{1/c\leq\rho_j(s)\leq c} for 
\HAhb{s\in Q} and 
\HAhb{j\geq1}. In this case it is obvious that 
\HAbeq \label{HAE.rj0} 
\rho_j\sim\rho_j(0) 
\HAqa j\geq1\;. 
\HAeeq 
If, however, 
\HAhb{R(\HAiy)=\HAiy}, then we see from (\ref{HAE.rrF}) that there 
exists~$j_0$ such that (\ref{HAE.rj0}) holds for 
\HAhb{j\geq j_0}. As above, we observe that (\ref{HAE.rj0}) applies for 
\HAhb{1\leq j\leq j_0} also. Thus (\ref{HAE.rj0}) is true in general. Using 
this we infer from (\ref{HAE.drF}) and (\ref{HAE.damY}) that 
$$ 
\|(\HAlda_*\rho_j)\HApa(\umu_*Y_\HAba)\|_\HAiy\leq c(\HAal)p_\upi 
\HAqa \HAal\in\HABN^d\HAssm\{0\}\;, 
$$ 
for 
\HAhb{\upi=\unu\circ\HAka\in\HAgP_V} with 
\HAhb{\HAka=\HAka_{j\HAba}} and 
\HAhb{\unu\in\HAgN_\HAka}. Moreover, (\ref{HAE.dklj}),\, (\ref{HAE.damY}), 
\ \HAhb{0<\HAda_j\leq1}, and the boundedness of~$b$ guarantee 
$$ 
\|\HApl^k(\HAlda_*\rho_j)\HApa(\umu_*Y_\HAba)\|_\HAiy\leq c(k,\HAal)p_\upi 
\HAqa k\geq1 
\HAqb \HAal\in\HABN^d\;, 
$$ 
for 
\HAhb{\upi=\unu\circ\HAka\in\HAgP_V} with 
\HAhb{\HAka=\HAka_{j\HAba}} and 
\HAhb{\unu\in\HAgN_\HAka}. Here we also use the boundedness of~$B$ 
in~$\HABR^{\bar d}$ if 
\HAhb{\HAal=0}. This implies that estimate~(\ref{HAE.daip}) holds in this 
case as well. Hence condition~(\ref{HAE.pr})(iv) is also satisfied. This 
proves the assertion. 
\qed 
\end{proof} 
\begin{HAremark}\label{HArem-E.P} 
Let the hypotheses of Proposition~\ref{HApro-E.P} be satisfied with 
\HAhb{R\in\HAcC(J_0)}. Set 
\HAhb{(B_1,g_{B_1}):=(P,g_P)}, 
\ \HAhb{\bar d_1:=1+\bar d}, 
\ \HAhb{b_1:=p}, and 
\HAhb{S_1:=V=\HAvp^{-1}(I\times S)}. Then $(B_1,g_{B_1})$ 
is a bounded Riemannian submanifold of $(\HABR^{\bar d_1},g_{\bar d_1})$ 
and $b_1$~is a bounded 
\hbox{p-r} singularity function for $(B_1,g_{B_1})$ on~$S_1$. 

\par 
Fix 
\HAhb{J_1\in\{J_0,J_\HAiy\}} and 
\HAhb{R_1\in\HAcC(J_1)}, resp.\ 
\HAhb{R_1\in\HAcF(J_\HAiy)}. Set 
\HAhb{r_1:=R_1} if 
\HAhb{R_1\in\HAcC(J_1)}, resp.\ 
\HAhb{r_1:=\HAmf{1}} if 
\HAhb{R_1\in\HAcF(J_1)}. Denote by 
\HAhb{\HAvp_1\HAsco P_1=P(R_1,B_1)\HAra\HAci J_1\times B_1} the canonical 
stretching diffeomorphism of~$P_1$ and set 
\HAhb{g_{P_1}:=\HAia_{P_1}^*g_{1+\bar d_1}}. Then 
Proposition~\ref{HApro-E.P} applies to guarantee that 
\HAhb{p_1:=\HAvp_1^*(r_1\otimes b_1)} is a cofinal 
singularity function for $(P_1,g_{P_1})$ on~$S_1$. In particular, 
$(P_1,g_{P_1}/p_1^2)$ is cofinally uniformly regular and, given 
cofinal subintervals $I_1$ of~$J_1$ and $I$ of~$J$, resp.,  
$$ 
\HAvp_{1*}(g_{P_1}/p_1^2)\underset{I_1\times I\times S}{\sim} 
(r_1\otimes R\otimes b_1)^{-2}(\D s_1^2+\D s^2+g_B)  
\HAnpb 
$$ 
and 
\HAhb{\HAvp_1(P_1)=\HAci J_1\times\HAci J\times B}. 
\qed 
\end{HAremark} 
This remark shows that we can iterate Proposition~\ref{HApro-E.P} to handle 
`higher order' singularities, e.g.\ cuspidal corners or funnels 
with edges. 
\section{Singular Ends}\label{HAsec-M}%
Throughout this section, $\HAMg$ is an \hbox{$m$-dimensional} Riemannian 
manifold and 
\HAhb{J\in\{J_0,J_\HAiy\}}. 

\par 
Suppose:
$$ 
\HAbal 
{}
\\ 
 \noalign{\vskip-7\jot}   
\rm{(i)} \qquad &R\in\HAcC(J),\ \ell\in\{1,\ldots,m\}, 
                 \ \bar\ell\geq\ell\;.\\ 
\noalign{\vskip-1\jot} 
\rm{(ii)}\qquad &(B,g_B)\text{ is a compact  
                 $(\ell-1)$-dimensional}\\ 
\noalign{\vskip-1\jot} 
                 &\text{Riemannian submanifold of }\HABR^{\bar\ell}\;.\\
\noalign{\vskip-1\jot} 
\rm{(iii)}\qquad&(\HAGa,g_\HAGa)\text{ is a compact connected 
                 $(m-\ell)$-dimensional}\\
\noalign{\vskip-1\jot} 
                &\text{Riemannian manifold without boundary}\;.\\ 
\HAeal 
$$ 
Then 
$$ 
W=W(R,B,\HAGa):=P(R,B)\times\HAGa 
$$ 
is the \emph{smooth model \hbox{$\HAGa$-wedge} over the 
\hbox{$(R,B)$-pipe}} 
\HAhb{P=P(R,B)}. It is a submanifold of 
\HAhb{\HABR^{1+\bar\ell}\times\HAGa} of dimension~$m$, and 
\HAhb{\HApl W=\HApl P\times\HAGa}. If 
\HAhb{\ell=m}, then $\HAGa$~is a one-point space and $W$~is naturally 
identified with~$P$ (equivalently: there is no $(\HAGa,g_\HAGa)$). 
Thus every pipe is also a wedge. This convention 
allows for a uniform language by speaking, in what follows, of wedges only. 
Given a cofinal subinterval~$I$ of~$J$, we set 
$$ 
W[I]:=P(R,B;I)\times\HAGa\;. 
\HAnpb 
$$ 
We fix a Riemannian metric~$h_P$ for~$P$ and set 
\HAhb{g_W:=h_P+g_\HAGa}. 

\par 
Let $V$ be open in~$M$. Then~$\HAVg$, more loosely:~$V$, is~a 
\emph{smooth wedge of type}~$(W,g_W)$ in~$\HAMg$ if it is isometric 
to~$(W,g_W)$. More precisely, $(Vg)$~is said to be \emph{modeled} by 
$[\Phi,W,g_W]$ if $\Phi$~is an isometry from~$\HAVg$ onto~$(W,g_W)$, 
a~\emph{modeling isometry} for~$\HAVg$. 

\par 
Assume 
\HAbeq \label{HAM.S} 
\HAbal 
{}
\\ 
 \noalign{\vskip-7\jot}   
&\{V_0,V_1,\ldots,V_k\}\text{ is a finite open covering of $M$ such that}\\
\noalign{\vskip-1\jot} 
&\qquad 
\HAbal 
{}
\rm{(i)} \qquad &V_i\cap V_j=\HAes\;,\ 1\leq i<j\leq k\;;\\
\noalign{\vskip-2\jot} 
\rm{(ii)}\qquad &V_0\cap V_i\text{ is a relatively compact for } 
                1\leq i\leq k\;;\\
\noalign{\vskip-2\jot} 
\rm{(iii)}\qquad&(V_i,g)\text{ is a smooth wedge in $\HAMg$ for } 
                1\leq i\leq k\;.
\HAeal 
\HAeal 
\HAnpb  
\HAeeq 
Then $\HAMg$ is~a Riemannian \emph{manifold with} (finitely many) 
\emph{smooth singularities}. 

\par 
The following theorem is the main result of this paper. It is shown 
thereafter that we can derive from it all results stated in the 
introduction---and many more---by appropriate choices of the 
modeling data. 
\begin{theorem}\label{HAthm-M.S} 
Suppose $\HAMg$ is a Riemannian manifold with smooth singularities. 
Let $\rho_0$ be a singularity function for~$\HAMg$ on~$V_0$ and assume that 
$\rho_i$~is a cofinal singularity function for~$(V_i,g)$, 
\ \HAhb{1\leq i\leq k}. Then there exists a singularity function~$\rho$ 
for~$\HAMg$ such that
\HAhb{\rho\sim\rho_j} on~$V_j$ for 
\HAhb{0\leq j\leq k}. Thus 
\HAhb{(M,\ g/\rho^2)} is uniformly regular. 
\end{theorem}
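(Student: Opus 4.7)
The natural strategy is to reduce the theorem to the patching lemma, Lemma~\ref{HAlem-P.P}, applied to the open cover $\{V_0, V_1, \ldots, V_k\}$ indexed by $\HAsA := \{0, 1, \ldots, k\}$. For each $j \ge 1$, unpacking the cofinal singularity hypothesis produces a cofinal open subset $S_j$ of $V_j$ and an atlas $\HAgK_j$ so that $(\rho_j, \HAgK_j)$ is a singularity datum for $(V_j, g)$ on~$S_j$. I would choose $S_j$ cofinally large enough that $V_j \setminus S_j \subset V_0 \cap V_j$; this is possible because the $V_i$ with $i \ge 1$ are pairwise disjoint, so $V_j \setminus V_0$ is precisely the noncompact singular end of $V_j$ and is absorbed by any sufficiently large cofinal subset. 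Set $S_0 := V_0$ and let $\HAgK_0$ be an atlas realizing $(\rho_0, \HAgK_0)$ as a singularity datum on~$V_0$. Then $\bigcup_{j=0}^{k} S_j = M$, since any point of $M \setminus V_0$ lies in some $V_j \setminus V_0 \subset S_j$.

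Next I would verify the remaining hypotheses of Lemma~\ref{HAlem-P.P}. Condition (iv), namely $\rho_\alpha \sim \rho_{\tilde\alpha}$ on $V_\alpha \cap V_{\tilde\alpha}$, is vacuous for $1 \le \alpha < \tilde\alpha \le k$ by disjointness; for $\alpha = 0$, $\tilde\alpha = j \ge 1$ the overlap $V_0 \cap V_j$ is relatively compact, and on its compact closure both $\rho_0$ and $\rho_j$ are positive and continuous, hence bounded and bounded below, yielding the equivalence. Condition (v), the chart-compatibility estimate on overlaps, follows from Lemma~\ref{HAlem-U.C}: on the compact set $\overline{V_0 \cap V_j}$ both atlases $\HAgK_0$ and $\HAgK_j$ represent the unique uniformly regular structure, so their restrictions there are equivalent in the sense of~(\ref{HAN.Keq}), which is precisely the bound on chart changes required by~(v).

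The patching lemma then produces $\rho \in C^\infty\bigl(M, (0,\infty)\bigr)$ and an atlas $\HAgK$ making $(\rho, \HAgK)$ a singularity datum for $\HAMg$ with $\rho \sim \rho_j$ on $S_j$ for each $j$; in particular $(M, g/\rho^2)$ is uniformly regular. The asserted equivalence $\rho \sim \rho_j$ on all of $V_j$ is immediate for $j = 0$ since $S_0 = V_0$, and for $j \ge 1$ it extends from $S_j$ to $V_j$ because $V_j \setminus S_j \subset V_0 \cap V_j$, where the chain $\rho \sim \rho_0 \sim \rho_j$ (using~(iv) for the second equivalence) finishes the argument.

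The main obstacle will be condition~(v). One must ensure that every overlapping pair $(\kappa_0, \kappa_j)$ with $\kappa_0 \in \HAgK_{0, V_0}$ and $\kappa_j \in \HAgK_{j, S_j}$ actually interacts inside the compact overlap $\overline{V_0 \cap V_j}$, so that the uniqueness-on-compact-sets argument applies. A preliminary application of Lemma~\ref{HAlem-P.V} to localize the two atlases---keeping chart domains in $\HAgK_{0, V_0}$ close to~$V_0$ and chart domains in $\HAgK_{j, S_j}$ close to the chosen cofinal set---confines all relevant interactions to this compact overlap, after which the remaining estimates reduce to standard finite-dimensional bookkeeping.
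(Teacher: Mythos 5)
Your overall strategy is the paper's: reduce to the patching Lemma~\ref{HAlem-P.P} for the cover $\{V_0,\ldots,V_k\}$, use the relative compactness of $V_0\cap V_i$ together with the pairwise disjointness of the $V_i$ to obtain condition~(iv) (both $\rho_0$ and $\rho_i$ are $\sim\HAmf{1}$ there), and reduce condition~(v) to finitely many charts interacting inside a compact overlap. Your verification of~(iv), your compactness argument for~(v) (the paper simply makes the atlases finite on the compact overlap rather than invoking Lemma~\ref{HAlem-U.C}, but the substance is the same), and your final extension of $\rho\sim\rho_j$ from $S_j$ to all of $V_j$ all match the paper's proof.

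The one step that fails is the choice $S_0:=V_0$. Hypothesis~(iii) of Lemma~\ref{HAlem-P.P} then requires $(\rho_0,\HAgK_0)$ to be a singularity datum for $(V_0,g)$ on all of $S_0=V_0$, that is, an atlas \emph{for the manifold $V_0$} (chart domains contained in $V_0$) which is uniformly regular at every point of $V_0$. This is strictly stronger than the hypothesis you are given, namely that $\rho_0$ is a singularity function for $\HAMg$ on $V_0$: the witnessing charts there are charts of~$M$ and may extend beyond~$V_0$. In general $(V_0,g/\rho_0^2)$ is \emph{not} uniformly regular on all of $V_0$ as a manifold in its own right---compare the paper's remark that the open unit ball in $\HABR^m$ admits no uniformly regular atlas; near the frontier $\bar V_0\HAssm V_0$ normalized charts contained in $V_0$ must shrink, which destroys shrinkability. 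For the same reason your proposed use of Lemma~\ref{HAlem-P.V} to keep the chart domains of $\HAgK_{0,V_0}$ inside $V_0$ cannot work with $S=V_0$, since then $d_{\bar V_0}(S,M\HAssm V_0)=0$. The paper avoids this by taking $S_0$ to be a \emph{closed proper} subset of $V_0$ with $S_0\HAjs V_0\HAssm\bigcup_{i}V_i$ and $\HAdist(S_0\cap V_i,\,V_i\HAssm V_0)>0$, so that Lemma~\ref{HAlem-P.V} applies and produces an atlas for $V_0$ uniformly regular on~$S_0$; the covering $\{S_0,S_1,\ldots,S_k\}$ of~$M$ is then restored by enlarging the cofinal pieces~$S_i$ accordingly. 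With this one correction your argument goes through and coincides with the paper's.
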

\begin{proof} 
Suppose $(V_i,g)$ is modeled by 
$[\Phi_i,W_i,g_i]$ for 
\HAhb{1\leq i\leq k}, where we write~$W_i$ for 
$W(R_i,B_i,\HAGa_i)$ with 
\HAhb{R_i\in\HAcR(J_i)} and
\HAhb{g_i:=g_{W_i}}. Given a cofinal subinterval~$I_i$ of~$J_i$, we set 
\HAhb{S_i:=\Phi_i^{-1}\bigl(W_i[I_i]\bigr)}. By the relative compactness 
of 
\HAhb{V_0\cap V_i} we can find a closed subset~$S_0$ of~$V_0$ such that 
\HAhb{S_0\HAjs V_0\HAbssm\bigcup_{i=1}^kV_i} and 
\HAhb{\HAdist(S_0\cap V_i,\ V_i\HAssm V_0)>0} as well as closed cofinal 
subintervals~$I_i$ of~$J_i$, 
\ \HAhb{1\leq i\leq k}, such that 
\HAhb{\{S_0,S_1,\ldots,S_k\}} is a covering of~$M$. By the assumptions 
on~$\rho_j$, 
\ \HAhb{0\leq j\leq k}, we can find atlases~$\HAgK_j$, 
\ \HAhb{0\leq j\leq k}, such that $(\rho_j,\HAgK_j)$ is a singularity datum 
for $(V_j,g_j)$ on~$S_j$. Since 
\HAhb{V_0\cap V_i} is relatively compact it follows that 
\HAhb{\rho_0\sim\HAmf{1}} and 
\HAhb{\rho_i\sim\HAmf{1}} on 
\HAhb{V_0\cap V_i} for 
\HAhb{1\leq i\leq k}. Thus 
\HAhb{\rho_i\HAsn(V_i\cap V_j)\sim\rho_j\HAsn(V_i\cap V_j)} for 
\HAhb{0\leq i<j\leq k}, due to (\ref{HAM.S})(i). Note that 
\HAhb{S_0\cap S_i} is relatively compact in 
\HAhb{V_0\cap V_i}. Hence we can assume that 
$\HAgK_{i,\ S_0\cap S_i}$ is finite for 
\HAhb{1\leq i\leq k}. From this and (\ref{HAM.S})(i) it is clear that 
condition~(v) of Lemma~\ref{HAlem-P.P} is satisfied. Hence that lemma 
guarantees the validity of the assertion. 
\qed 
\end{proof} 
Let $\HAVg$ be a smooth wedge in~$\HAMg$ modeled by $[\Phi,W,g_W]$. 
Then 
\HAhb{W=P\times\HAGa} with 
\HAhb{P=P[R,B]}, and 
\HAhb{\HAvp=\HAvp[R]} is the canonical stretching isometry 
from~$(P,h_P)$ onto 
\HAhb{(J\times B,\ \HAvp_*h_P)}.  Hence 
\HAbeq \label{HAM.ps} 
\Psi:=(\HAvp\times\HAid_\HAGa)\circ\Phi 
\HAsco\HAVg\HAmt(J\times B\times\HAGa,\ \HAvp_*h_P+g_\HAGa) 
\HAeeq  
is a modeling isometry for~$\HAVg$. Since $B$ and~$\HAGa$ are compact, 
$\HAmf{1}_B$~and~$\HAmf{1}_\HAGa$ are singularity functions for $B$ 
and~$\HAGa$, respectively. Suppose 
\HAhb{r\in C^\HAiy\bigl(J,(0,\HAiy)\bigr)}. Then 
\HAhb{r\otimes\HAmf{1}_B\otimes\HAmf{1}_\HAGa} is the 
`constant extension' of~$r$ over 
\HAhb{J\times B\times\HAGa}. It satisfies 
$$ 
(\HAvp\times\HAid_\HAGa)^* 
(r\otimes\HAmf{1}_B\otimes\HAmf{1}_\HAGa) 
(t,y,z)=r(t) 
\HAqa (t,y,z)\in J\times B\times\HAGa\;. 
$$ 
Thus, in abuse of notation, we set 
\HAbeq \label{HAM.ph} 
\Phi^*r:=\Psi^*(r\otimes\HAmf{1}_B\otimes\HAmf{1}_\HAGa) 
\HAeeq 
without fearing confusion. In other words: we identify~$r$ with its 
point-wise extension over 
\HAhb{P\times\HAGa}. 
\begin{proposition}\label{HApro-M.V} 
Let $\HAVg$ be a smooth wedge in~$\HAMg$ modeled by 
$[\Phi,W,g_W]$. Assume that one of the following conditions is satisfied: 
\setlength{\svitemindent}{1.3\parindent}
\begin{enumerate}  
\item[{\rm(i)}] 
${}$
\HAhb{R\in\HAcC(J)} or 
\HAhb{R\in\HAcF(J_\HAiy)}, 
\ \HAhb{h_P=g_P}, and 
\HAhb{r:=R} if 
\HAhb{R\in\HAcC(J)}, whereas 
\HAhb{r=\HAmf{1}} otherwise. 
\item[{\rm(ii)}] 
\begin{enumerate}  
\item[$(\HAal)$] 
${}$
\HAhb{J=J_0}, 
\ \HAhb{\HAal\in(-\HAiy,1]}, and 
\HAhb{R=\HAsR_\HAal}. 
\item[$(\HAba)$] 
${}$
\HAhb{\HAba\neq0} and satisfies 
\HAhb{\HAba\leq\HAal} with 
\HAhb{\HAba>0} if\/
\HAhb{\HAal>0}. 
\item[$(\HAga)$] 
${}$
\HAhb{h_P=\HAvp_\HAal^*(t^{2(\HAba-1)}\D t^2+g_B)}.  
\item[$(\HAda)$] 
${}$
\HAhb{r:=\HAsR_\HAal} if\/ 
\HAhb{0<\HAal\leq1} and 
\HAhb{r:=\HAmf{1}} otherwise. 
\end{enumerate}  
\end{enumerate} 
Then 
\HAhb{\rho:=\Phi^*r} is a cofinal singularity function for~$\HAVg$. 
\end{proposition}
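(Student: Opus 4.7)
The plan is to reduce the wedge-level statement to singularity information on the pipe $P$ and the base $\HAGa$, then transfer the result to $V$ through the modeling isometry~$\Phi$.

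\textbf{Pipe level.} I would first show that $\HAvp^{*}(r\otimes\HAmf{1}_B)$ is a cofinal singularity function for $(P,h_P)$. In Case~(i), compactness of $B\HAhr\HABR^{\bar\ell}$ together with Corollary~\ref{HAcor-U.C} and Lemma~\ref{HAlem-E.c} shows that $\HAmf{1}_B$ is a p-r singularity function for $(B,g_B)$. Proposition~\ref{HApro-E.P} then applies with $b=\HAmf{1}_B$ and the given $R$ to yield a cofinal p-r singularity function for $(P,g_P)=(P,h_P)$, which is a genuine cofinal singularity function by Lemma~\ref{HAlem-E.i}. In Case~(ii) the sign and magnitude conditions on $\HAal$ and $\HAba$ are exactly those of Example~\ref{HAexa-F.ex}(b); applying that example with $b=\HAmf{1}_B$ delivers the analogous conclusion for $(P,h_P)=(P_\HAal,g_{\HAal,\HAba})$.

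\textbf{Wedge level.} Since $(\HAGa,g_\HAGa)$ is compact, Corollary~\ref{HAcor-U.C} makes $\HAmf{1}_\HAGa$ a bounded singularity function for it. A short case inspection using $R(1)=1$ together with the cofinal limits prescribed by (\ref{HAC.C}) and (\ref{HAC.F}) and the explicit form of $\HAsR_\HAal$ on $J_0$ shows that $r$, and hence $\HAvp^{*}(r\otimes\HAmf{1}_B)$ on the cofinal region, is bounded. Theorem~\ref{HAthm-P.MM} then produces a cofinal singularity function $\HAvp^{*}(r\otimes\HAmf{1}_B)\otimes\HAmf{1}_\HAGa$ for $(W,g_W)=(P\times\HAGa,\,h_P+g_\HAGa)$.

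\textbf{Transfer.} The modeling isometry $\Phi\HAsco(V,g)\to(W,g_W)$ transfers the wedge-level singularity structure to $V$ via Lemma~\ref{HAlem-P.f}. Writing $\Psi=(\HAvp\times\HAid_\HAGa)\circ\Phi$ as in (\ref{HAM.ps}), a direct unwinding gives
\[
\Phi^{*}\bigl(\HAvp^{*}(r\otimes\HAmf{1}_B)\otimes\HAmf{1}_\HAGa\bigr)
=\Psi^{*}(r\otimes\HAmf{1}_B\otimes\HAmf{1}_\HAGa)=\Phi^{*}r=\rho,
\]
the last equality being the convention~(\ref{HAM.ph}). This completes the argument.

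\textbf{Main obstacle.} The only non-routine bookkeeping is the matching of exponents in Case~(ii) against the hypotheses of Example~\ref{HAexa-F.ex}(b) (the condition $\HAba\neq0$ and the inequality $\HAba\leq\HAal$ with the correct sign convention when $\HAal>0$), and the identification of the two occurrences of $r$ as defined in (\ref{HAC.r0}) and in the proposition's statement. No new estimate is required beyond the results already established in Sections~\ref{HAsec-P}, \ref{HAsec-U}, \ref{HAsec-F}, and~\ref{HAsec-E}.
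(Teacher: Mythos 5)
Your proof follows essentially the same route as the paper's: Proposition~\ref{HApro-E.P} together with Lemma~\ref{HAlem-E.i} for case~(i), Example~\ref{HAexa-F.ex}(b) for case~(ii), the product Theorem~\ref{HAthm-P.MM} to pass from the pipe to the wedge, and Lemma~\ref{HAlem-P.f} with the convention~(\ref{HAM.ph}) to transfer along~$\Phi$. You merely make explicit a few steps the paper leaves implicit (that $\HAmf{1}_B$ is a \hbox{p-r} singularity function via Corollary~\ref{HAcor-U.C} and Lemma~\ref{HAlem-E.c}, and the boundedness of~$r$ needed for Theorem~\ref{HAthm-P.MM}), which is correct and harmless.
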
 
\begin{proof} 
Suppose $p$~is a cofinal singularity function for~$(P,h_P)$. Then 
\HAhb{p\otimes\HAmf{1}_\HAGa} is one for 
\HAhb{W=P\times\HAGa}, due to Theorem~\ref{HAthm-P.MM}. 

\par 
If (i)~is satisfied, then Lemma~\ref{HAlem-E.i} and 
Proposition~\ref{HApro-E.P} guarantee that 
\HAhb{\HAvp^*(R\otimes\HAmf{1}_B)} is a cofinal 
singularity function for~$(P,g_P)$. 

\par 
Let (ii)~apply. Then it follows from Example~\ref{HAexa-F.ex}(b) that 
\HAhb{\HAvp_\HAal^*(r\otimes\HAmf{1}_B)} is a cofinal singularity 
function for~$(P_\HAal,h_P)$. Now the considerations 
preceding the proposition imply the claims. 
\qed 
\end{proof} 
For the next lemma we recall definition~(\ref{HAI.dV}) 
where now $\HAcM$~is replaced by~$M$. 
\begin{lemma}\label{HAlem-M.V} 
Suppose 
\HAhb{R\in\HAcC(J_\HAiy)\cup\HAcF(J_\HAiy)}. Let $\HAVg$ be a 
smooth wedge in~$\HAMg$ modeled by 
$\bigl[\Phi,P(R,B),g_{P(R,B)}\bigr]$. If 
\HAhb{R\in\HAcC(J_\HAiy)}, then there exists 
a cofinal singularity function~$\rho$ for~$\HAVg$ satisfying 
\HAhb{\rho\sim R\circ\HAda_V}. If 
\HAhb{R\in\HAcF(J_\HAiy)}, then $\HAVg$~is 
cofinally uniformly regular. 
\end{lemma}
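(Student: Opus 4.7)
My plan is to handle both cases by a direct appeal to Proposition~\ref{HApro-M.V}(i). With $h_P := g_{P(R,B)}$ (the induced Euclidean metric), hypothesis~(i) of that proposition is satisfied by setting $r := R$ when $R \in \HAcC(J_\HAiy)$, and $r := \HAmf{1}$ when $R \in \HAcF(J_\HAiy)$. The proposition then immediately supplies $\rho_0 := \Phi^* r$ as a cofinal singularity function for $\HAVg$. In the funnel case $\rho_0 = \HAmf{1}_V$, so $\HAVg$ is cofinally uniformly regular by definition, settling the second assertion.

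For the cusp case $R \in \HAcC(J_\HAiy)$, the remaining task is to show that $\rho_0$ is equivalent to $R \circ \HAda_V$ on a cofinal subset of $V$. Writing $\Phi(p) = (t(p), R(t(p)) y(p))$ in the natural coordinates on $P(R,B) \HAis \HABR^{1+\bar\ell}$, one has $\rho_0(p) = R(t(p))$, so the claim reduces to $R(t(p)) \sim R(\HAda_V(p))$.

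The key geometric step is the distance estimate $\HAda_V(p) \sim t(p)$ on a cofinal subset. Fix $q_V \in \bar V \HAssm V$; under the modeling isometry it corresponds to a point $\hat q = (1, y_0) \in \bar P \HAssm P$. Projecting onto the $t$-axis in the ambient Euclidean metric of $\HABR^{1+\bar\ell}$ yields the lower bound $\HAdist_M(p, q_V) \geq t(p) - 1$. For the upper bound, concatenate the curve $s \mapsto (s, R(s) y_0)$ on $[1, t(p)]$ with a $g_B$-geodesic across the compact base $B$ at level $t = t(p)$; its length is at most $\int_1^{t(p)} \sqrt{1 + \dot R(s)^2 |y_0|^2}\, ds + R(t(p))\,\HAdiam_{g_B}(B)$, which is a linear function of $t(p)$ because $\|\dot R\|_\HAiy < \HAiy$, $B$ is bounded in $\HABR^{\bar\ell}$, and $R$ is bounded. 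Combining the two bounds yields $\HAda_V(p) \sim t(p)$ cofinally.

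The main obstacle is the final step: upgrading the comparability $\HAda_V(p) \sim t(p)$ to the equivalence $R(\HAda_V(p)) \sim R(t(p))$. The natural route is to sharpen the distance estimate, showing that the excess length $\int_1^{t(p)} (\sqrt{1 + \dot R(s)^2 |y_0|^2} - 1)\, ds$ is suitably controlled as $t \HAra \HAiy$, and then to transfer the local equivalence~(\ref{HAC.rr}) of Lemma~\ref{HAlem-C.CF}, valid in the gauge variable $s = \HAsa[R](t)$, back to the $t$-coordinate via the inverse gauge diffeomorphism. Executing this cleanly for a general $R \in \HAcC(J_\HAiy)$, rather than only for the power-law or arctangent-type examples of Example~\ref{HAexa-C.ex} where it is immediate, is the technical heart of the argument.
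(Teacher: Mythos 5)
Your overall route is the paper's: both cases are reduced to Proposition~\ref{HApro-M.V}(i), the funnel case is immediate because $r=\HAmf{1}$ there, and in the cusp case one takes $\rho:=\Phi^*R$ and must verify $\Phi^*R\sim R\circ\HAda_V$. The paper disposes of this last step in a single line (``since $\Phi$~is an isometry''), implicitly reading $\HAda_V$ as $1+{}$the distance to the inner end of the pipe and identifying that with the coordinate~$t$; up to this point your argument is correct and in fact more explicit than the source.

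The gap is the one you name yourself, and it is genuine: you establish only the two-sided multiplicative bound $\HAda_V(p)\sim t(p)$, and the implication $R(\HAda_V(p))\sim R(t(p))$ does \emph{not} follow from it for a general $R\in\HAcC(J_\HAiy)$. Membership in $\HAcC(J_\HAiy)$ bounds $\HApl^kR$, not $\HApl\log R$: for $R(t)=e^{1-t^2}$ (Example~\ref{HAexa-C.ex}(b) with $\HAba=1$, $\HAga=2$) the ratio $R(s)/R(t)$ is unbounded even under the sharper additive estimate $|s-t|\leq c$, so neither $\HAda_V\sim t$ nor $\HAda_V=t+O(1)$ suffices. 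Your proposed repair---transferring (\ref{HAC.rr}) back through the gauge---also cannot close this as stated, since (\ref{HAC.rr}) yields $\rho(s)\sim\rho(s')$ only for $|s-s'|\leq1$ in the gauge variable $s=\HAsa(t)$, and an $O(1)$ discrepancy in $t$ is stretched to an unbounded one in $s$ exactly when $R$ decays rapidly; one would need $|\HAsa(\HAda_V(p))-\HAsa(t(p))|\leq c$, a much stronger distance estimate. What actually saves the statement where it is used (the proofs of Theorems \ref{HAthm-I.T} and~\ref{HAthm-I.C}, where $R=\HAsR_\HAal$ with $\HAal<0$, or $R\in\HAcF(J_\HAiy)$) is the doubling property $R(s)\sim R(t)$ for $s\sim t$, which power laws enjoy and for which your estimate $\HAda_V\sim t$ is already enough. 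So either add that hypothesis on~$R$ (or restrict to $R=\HAsR_\HAal$) to finish along your lines, or supply the sharper gauge-level distance estimate; as written, the final step of the cusp case is missing.
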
 
\begin{proof} 
Suppose 
\HAhb{R\in\HAcC(J_\HAiy)}. Then $\Phi^*R$~is a cofinal singularity 
function for~$\HAVg$ by Proposition~\ref{HApro-M.V}(i). Since 
$\Phi$~is an isometry it follows 
\HAhb{\Phi^*R\sim R\circ\HAda_V}. This implies the assertion in the 
present case. If 
\HAhb{R\in\HAcF(J_\HAiy)}, then the claim follows also from the cited 
proposition.  
\qed 
\end{proof}
\begin{HAproofof1.2} 
The foregoing lemma shows that a tame end is cofinally uniformly regular. 
Let 
\HAhb{\{V_0,V_1,\ldots,V_k\}} be an open covering of~$\HAMg$ as 
in the definition preceding Theorem~\ref{HAthm-I.T}. Then we can 
shrink~$V_0$ slightly to~$\tilde V_0$ such that 
\HAhb{\{\tilde V_0,V_1,\ldots,V_k\}} is still an open covering 
and Lemma~\ref{HAlem-U.C} applies to guarantee that $\HAMg$~is 
uniformly regular on~$\tilde V_0$. Now the assertion follows 
from Theorem~\ref{HAthm-M.S}. 
\qed 
\end{HAproofof1.2} 
With the help of Theorem~\ref{HAthm-M.S} and Proposition~\ref{HApro-M.V} 
it is easy to construct uniformly regular Riemannian metrics in a 
great variety of geometric constellations. We leave this to the reader 
and proceed to study manifolds with smooth cuspidal singularities. 
For this we suppose: 
\HAbeq \label{HAM.Ga} 
\HAbal 
\\ 
\noalign{\vskip-7\jot}  
 \rm{(i)}  \qquad 
 &\HAcMg\text{ is an \hbox{$m$-dimensional} Riemannian manifold}\;.\\
 \noalign{\vskip-1\jot} 
 \rm{(ii)} \qquad  
 &(\HAGa,g_\HAGa)\text{ is a compact connected Riemannian submanifold}\\ 
 \noalign{\vskip-1\jot} 
 &\text{of $\HAcMg$ without boundary and codimension }\ell\geq1\,.\\ 
 \noalign{\vskip-1\jot} 
 \rm{(iii)}\qquad 
 &\HAGa\HAis\HApl\HAcM\text{ if }\HAGa\cap\HApl\HAcM\neq\HAes\;. 
\HAeal 
\HAeeq 
In the following, we use the notation preceding 
definition~(\ref{HAI.Ca}). First we assume 
\HAhb{\HAGa\HAis\HAci\HAcM}. Then there exists a uniform open tubular 
neighborhood~$\HAcU$ of~$\HAGa$ in~$\HAci\HAcM$ 
(e.g.\ M.W.~Hirsch~\cite{Hir94a} or A.A.~Kosinski~\cite{Kos93a}). More 
precisely, there exist 
\HAhb{\HAve\in(0,1)}, an open subset 
\HAhb{\HAcU=\HAcU_\HAve} of~$\HAcM$ with 
\HAhb{\HAcU\cap\HAGa=\HAGa}, and~a `tubular' diffeomorphism 
\HAhb{\utau\HAsco\HAcU\HAra\HABB^\ell\times\HAGa} such that 
\HAhb{\utau(\HAGa)=\{0\}\times\HAGa}, the 
tangential~$T\utau$ of~$\utau$ equals on~$T\HAGa$ the identity 
multiplied with the factor~$\HAve$, and 
\HAbeq \label{HAM.tg0} 
\utau_*g\sim g_{\HABB^\ell}+g_\HAGa\;. 
\HAeeq 

\par 
Let $T^\bot\HAGa$ be the normal bundle of~$\HAGa$. For 
\HAhb{\xi\in\HABS^{\ell-1}} and 
\HAhb{q\in\HAGa} there exists a unique 
\HAhb{\unu_\xi(q)\in T_q^\bot\HAGa} satisfying 
$$ 
(T_q\utau)\unu_\xi(q)=\bigl((0,\xi),q\bigr) 
   \in T_0\HABR^\ell\times\HAGa\;. 
$$ 
Let 
\HAhb{\HAga_{\unu,q}\HAsco[0,\HAve]\HAra\HAcM} be the geodesic emanating 
from~$q$ in direction 
\HAhb{\unu\in T_q^\bot\HAGa}. Then 
$$ 
p=p(t,\xi,q):=\utau^{-1}(t,\xi,q) 
=\HAga_{\HAve\unu_\xi(q),q}(t) 
\HAqa (t,\xi,q)\in[0,1)\times\HABS^{\ell-1}\times\HAGa\;. 
$$ 
From this we infer 
\HAbeq \label{HAM.td} 
t\sim\HAda_\HAcU\bigl(p(t,\xi,q),\HAGa\bigr) 
\HAqa (t,\xi,q)\in[0,1)\times\HABS^{\ell-1}\times\HAGa\;. 
\HAeeq 

\par 
Next we suppose 
\HAhb{\HAGa\HAis\HApl\HAcM}. Let 
\HAhb{\HAcU\HAhthdot=\HAcU\HAhthdot_{\kern-1.2ex\HAve}}  
be an open tubular neighborhood of~$\HAGa$ in~$\HApl\HAcM$ with 
associated tubular diffeomorphism 
\HAbeq \label{HAM.dt} 
\utau\kern.3ex\HAhthdot\HAsco\HAcU\HAhthdot\HAra\HABB^{\ell-1}\times\HAGa\;. 
\HAeeq 
Furthermore, there exists a uniform collar 
\HAhb{\HAcV=\HAcV_\HAve} for~$\HApl\HAcM$ over~$\HAcU\HAhthdot$. 
That is to say: 
by making~$\HAve$ smaller, if necessary, we can assume that 
$\HAcV$~is an open subset of~$\HAcM$ such that 
\HAhb{\HAcV\cap\HApl\HAcM=\HAcU\HAhthdot} and there exists a diffeomorphism 
\HAhb{\utau^+\HAsco\HAcV\HAra[0,1)\times\HAcU\HAhthdot} with 
\HAhb{\utau^+(\HAcU\HAhthdot)=\{0\}\times\HAcU\HAhthdot}, 
\ $T\utau^+$~equals the identity in $T_\HAGa\HApl\HAcM$ multiplied 
by~$\HAve$, and 
\HAbeq \label{HAM.tg+} 
\utau_*^+g\sim\D t^2+g_{\HApl\HAcM}\;. 
\HAeeq 
Note that 
\HAhb{\HABB_+^\ell\HAis[0,1)\times\HABB^{\ell-1}}. 
Hence it follows from (\ref{HAM.dt}) that there exists an open subset 
\HAhb{\HAcU=\HAcU_\HAve} of~$\HAcW$ such that 
\HAhb{\HAcU\cap\HApl\HAcM=\HAcU\HAhthdot} and 
\HAbeq \label{HAM.tau} 
\utau:=(\HAid_{[0,1)}\times\utau\kern.3ex\HAhthdot)\circ\utau^+ 
\HAsco\HAcU\HAra\HABB_+^\ell\times\HAGa 
\HAeeq  
is a diffeomorphism satisfying 
$$ 
\utau(\HAcU\HAhthdot) 
=\{0\}\times\HABB^{\ell-1}\times\HAGa 
\HAqb \utau(\HAGa)=\{0\}\times\HAGa\;. 
$$ 
By (\ref{HAM.tg0}) and (\ref{HAM.tg+}) we find 
$$ 
\utau_*g\sim\D t^2+g_{\HABB^{\ell-1}}\times g_\HAGa 
\sim g_{\HABB_+^\ell}\times g_\HAGa\;. 
$$ 
We let 
$\HAga\kern.3ex\HAhthdot_{\kern-.5ex\unu\kern.3ex\HAhthdot\kern-1pt,q}$ 
be the geodesic in~$\HApl\HAcM$ emanating from 
\HAhb{q\in\HAGa} in direction 
\HAhb{\unu\kern.3ex\HAhthdot\in T_{\HApl\HAcM}^\bot\HAGa}, 
where $T_{\HApl\HAcM}^\bot\HAGa$~is the orthogonal complement 
of~$T_q\HAGa$ in~$T_q\HApl\HAcM$. Suppose 
\HAhb{\xi=(s,\eta)} belongs to~$\HABS_+^{\ell-1}$ with 
\HAhb{s\in[0,1)} and 
\HAhb{\eta\in\HABR^{\ell-2}}, 
\ \HAhb{0\leq t\leq1}, and 
\HAhb{q\in\HAGa}. Define 
\HAhb{\unu\kern.3ex\HAhthdot_{\kern-.5ex\eta}(q)} in 
$T_{\HApl\HAcM,q}^\bot\HAGa$ by 
\HAhb{(T\HAhthdot_q\utau\kern.3ex\HAhthdot)
   \unu\kern.3ex\HAhthdot_{\kern-.5ex\eta}(q) 
   =\bigl((0,\eta),q\bigr)\in T_0\HABR^{\ell-2}\times\HAGa}, 
where 
$T\HAhthdot\utau\kern.3ex\HAhthdot$~is the tangential 
of~$\utau\kern.3ex\HAhthdot$ in~$\HApl\HAcM$. Set 
\HAhb{r=r(t,\eta,q) 
   :=\HAga\kern.3ex\HAhthdot
   _{\kern-.5ex\HAve\unu\kern.3ex\HAhthdot_{\kern-.5ex\eta}(q),q} 
   (t)\in\HAcU\HAhthdot}. 
Analogously, let $\umu_s(r)$ in 
$T_r^\bot\HApl\HAcM$ be given by 
\HAhb{(T_r\utau^+)\umu_s(r)=\bigl((0,s),r\bigr) 
   \in T_0\HABR\times\HAcU\HAhthdot}. Then 
$$ 
p=p(t,\xi,q):=\utau^{-1}(t\xi,q) 
=\HAga_{\HAve\umu_s(r(t,\eta,q)),r(t,\eta,q)}(t)\in\HAcU\;. 
$$ 
This means that we reach~$p$ from 
\HAhb{q\in\HAGa} in two steps. First we go from~$q$ to  
\HAhb{r\in\HAcU\HAhthdot} by following during the time interval~$[0,t]$ the 
geodesic in~$\HAcU\HAhthdot$ which emanates from~$q$ 
in direction~$\HAve\unu\kern.3ex\HAhthdot_{\kern-.7ex\eta}(q)$. 
Second, we follow during the time interval~$[0,t]$ the geodesic 
in~$\HAcU$ emanating from~$r$ in direction~$\HAve\umu_s(r)$ to 
arrive at~$p$. Observe 
$$ 
\HAdist_\HAcU(p,r)=\HAdist_\HAcU(p,\HAcU\HAhthdot) 
=\HAda_\HAcU(p,\HAcU\HAhthdot)\;. 
$$ 
Hence 
$$ 
t\sim\HAdist_\HAcU(p,r)\leq\HAdist_\HAcU(p,q) 
\leq\HAdist_\HAcU(p,r)+\HAdist_{\HAcU\HAhthdot}(r,q) 
\leq2t\;. 
$$ 
From this we infer 
\HAbeq \label{HAM.td2} 
t\sim\HAda_\HAcU\bigl(p(t,\xi,q),\HAGa\bigr) 
\HAqa (t,\xi,q)\in[0,1)\times\HABS^{\ell-1}\times\HAGa\;. 
\HAeeq 
Henceforth, 
\HAhb{\HABB:=\HABB^\ell} and 
\HAhb{\HABS:=\HABS^{\ell-1}} if 
\HAhb{\HAGa\in\HAci\HAcM}, whereas 
\HAhb{\HABB:=\HABB_+^\ell} and 
\HAhb{\HABS:=\HABS_+^{\ell-1}} otherwise. Then 
\HAhb{U=U_\HAGa:=\HAcU\HAssm\HAGa} is, in either case, 
a~\emph{tubular neighborhood of $\HAGa$ in}~$\HAMg$ and 
\HAhb{\utau=\utau\HAsn U\HAsco U\HAra\HAthBB\times\HAGa} is the 
(\emph{associated}) \emph{tubular diffeomorphism}, defined by 
(\ref{HAM.tau}) if 
\HAhb{\HAGa\in\HApl\HAcM}. By~$\HAda_\HAGa$ we denote the restriction of 
$\HAdist_\HAcU(\cdot,\HAGa)$ to~$U$. 

\par 
Let 
\HAhb{R\in\HAcC(J_0)} and 
\HAhb{\HAvp=\HAvp[R]}. With the (\hbox{$\ell$-dimensional}) polar 
coordinate diffeomorphism~$\upi$  the composition 
\HAbeq \label{HAM.UW} 
U\stackrel\utau\HAlora 
\HAthBB\times\HAGa 
\stackrel{\upi\times\HAid_\HAGa}\HAllllora 
(0,1)\times\HABS\times\HAGa 
\stackrel{\HAvp^{-1}\times\HAid_\HAGa}\HAlllllora 
W(R,\HABS,\HAGa) 
\HAeeq 
defines a diffeomorphism~$\Phi$ from~$U$ onto the model 
\hbox{$\HAGa$-wedge} 
\HAhb{W=W(R,\HABS,\HAGa)} over the spherical, 
resp.\ semi-spherical, \hbox{$R$-cusp} 
\HAhb{P=P(R,\HABS)}. We call~$U$ \emph{smooth singular end of~$\HAMg$ 
of type}~$(R,\HAGa)$ if $\Phi$~is an isometry from~$(U,g)$ 
onto~$(W,g_W)$, where 
\HAhb{h_P:=g_P}. 
\begin{lemma}\label{HAlem-M.R} 
Let $U$ be a smooth singular end of~$\HAMg$ of type~$(R,\HAGa)$. 
Then there exists a cofinal singularity function~$\rho$ for 
$(U,g)$ satisfying 
\HAhb{\rho\sim R\circ\HAda_\HAGa}. 
\end{lemma}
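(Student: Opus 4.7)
The strategy is to apply Proposition~\ref{HApro-M.V}(i) to produce a cofinal singularity function, and then to identify it with $R\circ\HAda_\HAGa$ via the explicit isometric model used in the definition of a smooth singular end.

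By the very definition of a smooth singular end of type $(R,\HAGa)$ (the paragraph containing (\ref{HAM.UW})), we have $R\in\HAcC(J_0)$ and an isometry $\Phi\HAsco(U,g)\HAra(W,g_W)$ with $W=W(R,\HABS,\HAGa)$, $P=P(R,\HABS)$, and $h_P=g_P$. Hypothesis~(i) of Proposition~\ref{HApro-M.V} is therefore satisfied with $r:=R$, and that proposition immediately yields $\rho:=\Phi^*R$ as a cofinal singularity function for $(U,g)$. This disposes of the existence part of the claim.

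To establish $\rho\sim R\circ\HAda_\HAGa$, I would unfold (\ref{HAM.UW}) together with the convention (\ref{HAM.ph}) to obtain the explicit formula $\rho(p)=R(|\utau_1(p)|)$, where $\utau_1$ denotes the first (ball) component of the tubular diffeomorphism~$\utau$ of~$\HAGa$. The distance estimate (\ref{HAM.td})---or (\ref{HAM.td2}) when $\HAGa\HAis\HApl\HAcM$---then yields $|\utau_1(p)|\sim\HAda_\HAGa(p)$ uniformly on~$U$, so that $\rho$ and $R\circ\HAda_\HAGa$ are obtained from $R$ by composition with equivalent functions. The equivalence then transfers through $R$ by exploiting the log-Lipschitz control of cusp characteristics built into the definition of $\HAcC(J_0)$: the bounds (\ref{HAC.C})(iii) combined with the derivation of (\ref{HAC.rr}) in the proof of Lemma~\ref{HAlem-C.CF} give that $\utau^*R$ has bounded logarithmic derivative in the gauge variable, and pulling this back to the $t$-variable yields that $R$ at multiplicatively-equivalent arguments produces equivalent values on a cofinal subset.

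The main obstacle, accordingly, is the last passage from $|\utau_1|\sim\HAda_\HAGa$ to $R(|\utau_1|)\sim R(\HAda_\HAGa)$: the comparison $\sim$ controls the arguments only up to a multiplicative constant, which for non-polynomial $R$ is a genuinely non-trivial condition. The cofinal nature of the statement (it suffices to argue on $\HAvp^{-1}(I\times\HABS)\times\HAGa$ for a cofinal subinterval~$I$) together with $R(0)=0$ forced by (\ref{HAC.C})(ii) is what makes the propagation go through; the rest of the argument is a bookkeeping exercise already encoded in Proposition~\ref{HApro-M.V}(i) and the estimates of Section~\ref{HAsec-C}.
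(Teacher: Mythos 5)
Your route is the paper's route: existence of the cofinal singularity function $\rho:=\Phi^*R$ via Proposition~\ref{HApro-E.P}, Lemma~\ref{HAlem-E.i} and Lemma~\ref{HAlem-P.f} (the paper cites these three directly; your citation of Proposition~\ref{HApro-M.V}(i) packages exactly that), then the identification $(\Phi^*R)\bigl(p(t,\xi,q)\bigr)=R(t)$ from (\ref{HAM.UW}), (\ref{HAM.ps}), (\ref{HAM.ph}), and finally the distance comparison (\ref{HAM.td}), resp.\ (\ref{HAM.td2}). Up to and including the statement \HAhb{|\utau_1|\sim\HAda_\HAGa} everything you write is correct and matches the paper.

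The one place where you add an argument the paper does not give --- the passage from \HAhb{|\utau_1|\sim\HAda_\HAGa} to \HAhb{R(|\utau_1|)\sim R(\HAda_\HAGa)} --- is where your mechanism fails. Estimate (\ref{HAC.rr}) controls \HAhb{\rho=R\circ\utau} only on gauge intervals \HAhb{|s-s'|\leq1}, and multiplicative equivalence of the original arguments does \emph{not} confine them to bounded gauge distance: already for \HAhb{R=\HAsR_\HAal} with \HAhb{\HAal>1} one has \HAhb{\HAsa(t)-\HAsa(2t)=(1-2^{1-\HAal})\,t^{1-\HAal}/(\HAal-1)\HAra\HAiy} as \HAhb{t\HAra0}, so (\ref{HAC.rr}) says nothing about $R(t)$ versus $R(2t)$. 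In the power-law case the needed implication is instead immediate from homogeneity, \HAhb{\HAsR_\HAal(ct)=c^\HAal\HAsR_\HAal(t)}, which is what the applications (Theorem~\ref{HAthm-I.C}, Proposition~\ref{HApro-I.C}) actually use. For a general \HAhb{R\in\HAcC(J_0)} the implication \HAhb{t\sim t'\Rightarrow R(t)\sim R(t')} is simply false: for \HAhb{R(t)=\E^{\HAba(1-t^\HAga)}} with \HAhb{\HAga<0} from Example~\ref{HAexa-C.ex}(b) one computes \HAhb{R(ct)/R(t)=\E^{\HAba t^\HAga(1-c^\HAga)}}, which is unbounded as \HAhb{t\HAra0}. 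So the final equivalence requires either an additional hypothesis on~$R$ (that it maps $\sim$-equivalent arguments to $\sim$-equivalent values) or a sharpening of (\ref{HAM.td}) beyond two-sided comparability. You have correctly located the delicate step --- the paper's own proof passes over it in silence --- but the log-Lipschitz/gauge-variable justification you propose does not close it.
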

\begin{proof} 
It is a consequence of Proposition~\ref{HApro-E.P}, 
Lemma~\ref{HAlem-E.i}, and Lemma~\ref{HAlem-P.f} that 
\HAhb{\rho:=\Phi^*R} is a cofinal singularity function for $(U,g)$. From 
(\ref{HAM.UW}) and (\ref{HAM.ps}) we deduce 
\HAhb{\Psi=(\upi\otimes\HAid_\HAGa)\circ\utau}. Moreover, 
\HAhb{\Psi\bigl(p(t,\xi,q)\bigr)=(t,\xi,q)} for 
$(t,\xi,q)$ belonging to 
\HAhb{(0,1)\times\HABS\times\HAGa}. Hence 
\HAhb{(\Phi^*R)\bigl(p(t,\xi,q)\bigr)=R(t)} by~(\ref{HAM.ph}). 
Now the claim is implied by (\ref{HAM.td}), respectively (\ref{HAM.td2}). 
\qed 
\end{proof} 
It is clear that the assertion of this lemma is independent of the 
particular choice of~$U$, that is, of~$\HAve$. 
\begin{HAproofof1.6,1.8} 
The statements follow directly from Lemma~\ref{HAlem-M.R} with 
\HAhb{R=\HAsR_\HAal}, Lemma~\ref{HAlem-M.V}, and 
Theorem~\ref{HAthm-M.S}. 
\qed 
\end{HAproofof1.6,1.8} 
\begin{HAproofof1.9} 
We set 
\HAhb{R:=\HAsR_\HAal} if 
\HAhb{0<\HAal\leq1}, and 
\HAhb{R:=\HAsR_{-\HAal}} for 
\HAhb{\HAal>1}. It follows from Example~\ref{HAexa-F.ex}(b) (setting 
\HAhb{\HAba:=\HAal} if 
\HAhb{\HAal\leq1} and 
\HAhb{\HAba:=-\HAal} otherwise) and Theorem~\ref{HAthm-P.MM} 
that 
$$ 
g_W:=\HAvp^*\bigl(t^{-2\HAal}(t^{2(\HAal-1)}\D t^2+g_\HABS)\bigr) 
+t^{-2\HAal}g_\HAGa 
=\HAvp^*(t^{-2}\D t^2+t^{-2\HAal}g_\HABS)+t^{-2\HAal}g_\HAGa 
$$ 
is a cofinally uniformly regular metric for 
\HAhb{W=W(R,\HABS,\HAGa)} if 
\HAhb{0<\HAal\leq1}, whereas 
$$ 
g_W:=\HAvp^*(t^{-2(\HAal+1)}\D t^2+g_\HABS)+g_\HAGa 
$$ 
is one if 
\HAhb{\HAal>1}. Thus~$\Phi$, defined by (\ref{HAM.UW}), is an 
isometry from $(U,\HAgsg)$ onto $(W,g_W)$. Hence the claim follows 
once more from Lemma~\ref{HAlem-P.f}. 
\qed 
\end{HAproofof1.9} 
Lastly, we mention that there occur interesting and important singular 
manifolds if assumption (\ref{HAM.Ga})(iii) is dropped, that is, if 
$\HAGa$~intersects~$\HAci\HAcM$ as well as~$\HApl\HAcM$. 
Then $\HAGa$~is no longer a smooth singular end but has cuspidal 
corners, for example. Such cases are not considered here although the 
technical means for their study have been provided in the preceding 
sections. 

\def\cprime{$'$} \def\polhk#1{\setbox0=\hbox{#1}{\ooalign{\hidewidth
  \lower1.5ex\hbox{`}\hidewidth\crcr\unhbox0}}}

\end{document}